\DeclareMathAlphabet{\mcb}{U}{BOONDOX-calo}{m}{n}
\SetMathAlphabet{\mcb}{bold}{U}{BOONDOX-calo}{b}{n}
\DeclareSymbolFont{timesoperators}{T1}{ptm}{m}{n}
\newcommand*\bcdot{\mathpalette\bcdot@{.5}}
\newcommand*\bcdot@[2]{\mathbin{\vcenter{\hbox{\scalebox{#2}{$\m@th#1\bullet$}}}}}
\renewcommand{\operator@font}{\mathgroup\symtimesoperators}
\colorlet{symbols}{blue!30!black!50}
\colorlet{testcolor}{green!60!black}
\definecolor{purple}{rgb}{0.55,0.05,0.8}
\let\oldskull\skull
\def\skull{\mathord{\oldskull}}
\DeclareMathAlphabet{\mathbbm}{U}{bbm}{m}{n}
\DeclareFontFamily{U}{BOONDOX-calo}{\skewchar\font=45 }
\DeclareFontShape{U}{BOONDOX-calo}{m}{n}{
	<-> s*[1.05] BOONDOX-r-calo}{}
\DeclareFontShape{U}{BOONDOX-calo}{b}{n}{
	<-> s*[1.05] BOONDOX-b-calo}{}
\DeclareMathAlphabet{\mcb}{U}{BOONDOX-calo}{m}{n}
\SetMathAlphabet{\mcb}{bold}{U}{BOONDOX-calo}{b}{n}
\setlist{noitemsep,topsep=4pt}
\newcommand*{\bigcdot}{}
\DeclareRobustCommand*{\bigcdot}{%
	\mathbin{\mathpalette\bigcdot@{}}%
}
\newcommand*{\bigcdot@scalefactor}{.5}
\newcommand*{\bigcdot@widthfactor}{1.15}
\newcommand*{\bigcdot@}[2]{%
	\sbox0{$#1\vcenter{}$}
	\sbox2{$#1\cdot\m@th$}%
	\hbox to \bigcdot@widthfactor\wd2{%
		\hfil
		\mathbb{R}aise\ht0\hbox{%
			\scalebox{\bigcdot@scalefactor}{%
				\lower\ht0\hbox{$#1\bullet\m@th$}%
			}%
		}%
		\hfil
	}%
}
\def\symbol#1{\textcolor{symbols}{#1}}
\def\1{\mathbf{\symbol{1}}}
\def\bone{\mathbf{1}}
\def\fC{\mathfrak{C}}
\def\bfC{\pmb{\mathfrak{C}}}
\DeclareMathAlphabet\mathzapf{T1}{pzc}{mb}{sc}
\def\zF{\mathfrak{F}}
\def\bzF{\pmb{\mathfrak{F}}}
\def\roots{R}
\def\eqdef{\stackrel{\text{\tiny def}}{=}}
\def\init{\mathcal{I}}
\def\fancynorm#1{{\talloblong #1 \talloblong}}
\newcommand{\noise}[1]{\llbracket #1 \rrbracket}
\newcommand{\mrd}{\mathrm{d}}
\newcommand{\mri}{\mathrm{i}}
\newcommand{\floor}[1]{\lfloor #1 \rfloor}
\newcommand{\roof}[1]{\lceil #1 \rceil}
\newcommand{\dd}{(2-d)/2}  
\def\tipar{\zeta}    
\colorlet{darkblue}{blue!90!black}
\colorlet{dgray}{green!20!darkgray}
\colorlet{darkgreen}{green!60!black}
\newcommand{\e}{\varepsilon}
\def\${|\!|\!|}
\def\E{\mathbf{E}}
\def\T{\mathbf{T}}
\newcommand{\mfe}{\mathfrak{e}}
\newcommand{\mfC}{\mathfrak{C}}
\newcommand{\mft}{\mathfrak{t}}
\newcommand{\mre}{\mathrm{e}}
\newcommand{\mcB}{\mathcal{B}}
\newcommand{\mcI}{\mathcal{I}}
\newcommand{\mcD}{\mathcal{D}}
\newcommand{\mcX}{\mathcal{X}}
\def\CP{\mathcal{P}}
\def\CC{\mathcal{C}}
\def\CD{\mathcal{D}}
\def\CI{\mathcal{I}}
\def\CM{\mathcal{M}}
\def\CT{\mathcal{T}}
\def\CS{\mathcal{S}}
\def\CB{\mathcal{B}}
\def\K{\mathscr{K}}
\def\I{\mathscr{I}}
\def\N{\mathbb{N}}
\def\R{\mathbb{R}}
\def\T{\mathbb{T}}
\def\Z{\mathbb{Z}}
\def\P{\mathbb{P}}
\def\E{\mathbb{E}}
\newcommand{\bbeta}{\bm{\beta}}
\newcommand{\bdelta}{\bm{\delta}}
\numberwithin{equation}{section}
\def\dash{\leavevmode\unskip\kern0.18em--\penalty\exhyphenpenalty\kern0.18em}
\def\slash{\leavevmode\unskip\kern0.15em/\penalty\exhyphenpenalty\kern0.15em}
\let\emph\textit 
\colorlet{greennode}{green!50!black}
\colorlet{rednode}{red!50!black}
\colorlet{lbluenode}{blue!25}
\colorlet{dbluenode}{blue}
\colorlet{orangenode}{orange}
\definecolor{connection}{rgb}{0.7,0.1,0.1}
\tikzset{
	dot/.style={circle,fill=black,inner sep=0pt, minimum size=1mm},
	dot0/.style={circle,fill=green!80,inner sep=1pt, minimum size=2mm},
	dot2/.style={circle,fill=black!55,inner sep=0pt, minimum size=2mm},
	root/.style={circle,fill=black!50,inner sep=0pt, minimum size=3mm},
	var/.style={circle,fill=black!10,draw=black,inner sep=0pt, minimum size=1.7mm},
	varR/.style={circle,fill=blue!10,draw=blue,inner sep=0pt, minimum size=1.7mm},
	var0/.style={circle,fill=gray!50,draw=black,inner sep=0pt, minimum size=1.7mm},
	var1/.style={rectangle,fill=black!10,draw=black,inner sep=0pt, minimum size=2mm},
	var2/.style={diamond,fill=black!10,draw=black,inner sep=0pt, minimum size=2.6mm},
	Var/.style={circle,fill=black!10,draw=black,inner sep=0pt, minimum size=2.2mm},
	kernel/.style={semithick,shorten >=2pt,shorten <=2pt},
	kernel1/.style={postaction={decorate,decoration={markings,mark=at position 0.45 with {\draw[-] (0,-0.08) -- (0,0.08);}}}},
	kernels/.style={snake=snake,segment amplitude=1pt,segment length=4pt},
	rho/.style={densely dashed,semithick,shorten >=2pt,shorten <=2pt},
	testfcn/.style={dotted,semithick,shorten >=2pt,shorten <=2pt},
	tau/.style={circle,inner sep=1pt,draw=black,fill=white,text=black,thin},
	renorm/.style={shape=circle,fill=white,inner sep=1pt},
	labl/.style={shape=rectangle,fill=white,inner sep=1pt},
	xi/.style={very thin,circle,fill=lbluenode,draw=symbols,inner sep=0pt,minimum size=1.2mm},
	xi1/.style={very thin,rectangle,fill=lbluenode,draw=symbols,inner sep=0pt,minimum size=1.2mm},
	xi2/.style={very thin,diamond,fill=lbluenode,draw=symbols,inner sep=0pt,minimum size=1.6mm},
	xigreen/.style={very thin,circle,fill=greennode,draw=symbols,inner sep=0pt,minimum size=1.2mm},
	xigreen1/.style={very thin,rectangle,fill=greennode,draw=symbols,inner sep=0pt,minimum size=1.2mm},
	xired/.style={very thin,circle,fill=rednode,draw=symbols,inner sep=0pt,minimum size=1.2mm},
	xilblue/.style={very thin,circle,fill=lbluenode,draw=symbols,inner sep=0pt,minimum size=1.2mm},
	xidblue/.style={very thin,circle,fill=dbluenode,draw=symbols,inner sep=0pt,minimum size=1.2mm},
	xiorange/.style={very thin,circle,fill=orangenode,draw=symbols,inner sep=0pt,minimum size=1.2mm},
	xix/.style={crosscircle,fill=lbluenode,draw=symbols,inner sep=0pt,minimum size=1.2mm},
	%
	xix-green-red/.style={circle, fill=greennode!70!white,draw=rednode,inner sep=0pt,minimum size=1.6mm,append after command={node [inner sep=0pt,minimum size=0.8mm,thick, draw = rednode, cross out]{}}},
	xix-green-red1/.style={rectangle, fill=greennode!70!white,draw=rednode,inner sep=0pt,minimum size=1.5mm,append after command={node [inner sep=0pt,minimum size=1mm,thick, draw = rednode, cross out]{}}},
	xib/.style={very thin,circle,fill=lbluenode,draw=symbols,inner sep=0pt,minimum size=1.6mm},
	xib1/.style={very thin,rectangle,fill=lbluenode,draw=symbols,inner sep=0pt,minimum size=1.6mm},
	xie/.style={very thin,circle,fill=greennode,draw=symbols,inner sep=0pt,minimum size=1.6mm},
	xid/.style={very thin,circle,fill=lbluenode,draw=symbols,inner sep=0pt,minimum size=1.6mm},
	xibx/.style={crosscircle,fill=lbluenode,draw=symbols,inner sep=0pt,minimum size=1.6mm},
	kernels2/.style={ultra thick,draw=symbols,segment length=12pt},
	not/.style={thin,regular polygon, regular polygon sides=3,draw=connection,fill=connection,inner sep=0pt,minimum size=1.2mm},
	notlblue/.style={thin,regular polygon, regular polygon sides=3,draw=lbluenode,fill=lbluenode,inner sep=0pt,minimum size=1.2mm},
	notorange/.style={thin,regular polygon, regular polygon sides=3,draw=orangenode,fill=orangenode,inner sep=0pt,minimum size=1.2mm},
	notgreen/.style={thin,regular polygon, regular polygon sides=3,draw=greennode,fill=greennode,inner sep=0pt,minimum size=1.2mm},
	>=stealth,
}
\def\DeclareSymbol#1#2#3{%
	\expandafter\gdef\csname MH@symb@#1\endcsname{\tikzsetnextfilename{symbol#1}%
		\tikz[baseline=#2,scale=0.15,draw=symbols,line join=round]{#3}}%
	\expandafter\gdef\csname MH@symb@#1s\endcsname{\scalebox{0.75}{\tikzsetnextfilename{symbol#1}%
			\tikz[baseline=#2,scale=0.15,draw=symbols,line join=round]{#3}}}%
	\expandafter\gdef\csname MH@symb@#1ss\endcsname{\scalebox{0.65}{\tikzsetnextfilename{symbol#1}%
			\tikz[baseline=#2,scale=0.15,draw=symbols,line join=round]{#3}}}%
}
\def\<#1>{\ifmmode\mathchoice{\csname MH@symb@#1\endcsname}{\csname MH@symb@#1\endcsname}{\csname MH@symb@#1s\endcsname}{\csname MH@symb@#1ss\endcsname}\else\csname MH@symb@#1\endcsname\fi}
\title{Local well-posedness of subcritical non-linear heat equations with Gaussian initial data}
\author{Ilya~Chevyrev$^1$ and Hora~Mirsajjadi$^{2}$}
\institute{SISSA, Trieste, Italy, \email{ichevyrev@gmail.com} 
\and School of Mathematics, The University of Edinburgh,
United Kingdom, \email{h.s.mirsajjadi@sms.ed.ac.uk}}
\date{\today}
\begin{document}
	\maketitle
	\begin{abstract}
		We show that any non-linear heat equation with scaling critical dimension $-1$ is locally well-posed when its initial condition is taken as the Gaussian free field in fractional dimension $d < 4$. Our results in particular extend the well-posedness results of \cite{cao2021yang,CCHS22_3D} from $d=3$ to the entire subcritical regime.
				\\[.4em]
				\noindent {\small \textit{Keywords:} non-linear heat equation, Gaussian free field, subcriticality, probabilistic well-posedness}\\
				\noindent {\small\textit{MSC classification:} 35A01, 35R60, 60G15, 81T18}
	\end{abstract}
	\setcounter{tocdepth}{1}
	
	\tableofcontents

\section{Introduction}

Consider the non-linear partial differential equation (PDE)
\begin{equs}\label{eq:A_eq}
	\partial_t A &= \Delta A + F(A,DA) \qquad &\text{on } (0,T)\times \T^n\;,
	\\
	A_0 &= X \qquad &\text{on } \T^n\;, \label{eq:A_ic}
\end{equs}
posed for a function $A\in \CC^\infty ((0,T)\times \T^n,E)$, where $n\geq 1$, $\T^n=\R^n/\Z^n$ is the $n$-dimensional torus, $E$ is a finite-dimensional vector space, and
the initial condition $A_0 = X$ is understood
in the sense that $\lim_{t\downarrow 0}A_t = X$ in a suitable space of distributions that we make precise below
(we use the notation $A_t = A(t,\cdot)$).
Here
$DA\in \CC^\infty ((0,T)\times \T^n,E^n)$ is the spatial derivative of $A$, i.e. $(DA)_i = \partial_i A$,
and $F\colon E\times E^n\to E$ is a polynomial of the form
\begin{equ}[eq:F]
	F(x,y) = B(x,y) + P(x) + Q(x,y)\;,
\end{equ}
where $B\colon E\times E^n \to E$ is bilinear,
$P\colon E^3\to E$ is trilinear and we write $P(x)$ for $P(x,x,x)$,
and $Q\colon E\times E^n \to E$ is a polynomial of the form $Q(x,y) = Q_1(x) + Q_2(y)$ where $Q_1$ is quadratic and $Q_2$ is linear.
Hence, the non-linearity in \eqref{eq:A_eq} can be written heuristically as
\begin{equ}[eq:F_heuristic]
F(A,DA) = A DA + A^3 + A^2 + A + DA + c\;,
\end{equ}
where $c\in E$ is a constant.

Our main result is the following well-posedness for \eqref{eq:A_eq}-\eqref{eq:A_ic}
where we take $X$ to be a Gaussian field satisfying the following Assumption~\ref{ass:GF}.
We note that the Gaussian free field (GFF) in fractional dimension $d$ for any $d\in(2,4)$ satisfies Assumption~\ref{ass:GF}
(see Appendix~\ref{app:GFF}).
We set $\N_0 = \{0,1,\ldots\}$ and, for a multi-index $k\in \N^n_0$, use the shorthand $f^{(k)} = D^k f$.

\begin{assumption}\label{ass:GF}
	Suppose $n\geq 2$,  $d\in(2,4)$,
 	and $X$ is a stationary $E$-valued centred Gaussian field on $\T^n$ with covariance function $C(x)\eqdef\E[X(0)\otimes X(x)]$ such that $C(x)=C(-x)$ and there exists $K>0$ for which $\big|C^{(k)}(x)\big|\leq K |x|^{2-d-|k|}$ for all $x\in\T^n\setminus\{0\}$ and $k\in\N^n_0$ with $|k|=\sum_{i=1}^n k_i\le1$.
\end{assumption}

The conditions $n\geq 2$ and $d<4$ ensure that $2-d>-n$, thus $C$ is integrable and convolution with $C$ is well-defined.
We denote by $\CC^\eta = B^\eta_{\infty,\infty}(\T^n,E)$ the H\"older--Besov space of regularity $\eta$ (see Section \ref{Subsec 1.1}). We also use $*$ to denote spatial convolution (i.e. convolution on $\T^n$) throughout.
Under \assu{ass:GF}, it is classical that $X$ admits a version in $\CC^\eta$ for any $\eta<\dd$, and we let $X$ denote this version henceforth.

\begin{theorem}\label{thm:main}
	Let $\chi\in \CC^\infty(\R^n)$ be compactly supported such that $\chi(x)=\chi(-x)$
	and $\int_{\R^n}\chi(x)\,\mrd x =1$, and denote $\chi^\e = \e^{-n}\chi(x/\e)$.
	Suppose Assumption~\ref{ass:GF} holds and denote $X^\e\eqdef \chi^\e* X$,
	a mollification of $X$ at scale $\e>0$.
	Denote by $A^\e$ the (smooth) solution to \eqref{eq:A_eq}  with initial condition $A^\e_0 = X^\e$.
	Then there exists a random variable $T\in (0,1)$ such that, for any $\eta < (2-d)/2$ and $p\in[1,\infty)$,
	\begin{equ}[eq:convergence]
		\lim_{\e\downarrow0} \Big|\sup_{t\in(0,T)}\{|A^\e_t - A_t|_{\CC^\eta}  + t^{-\eta/2}|A^\e_t - A_t|_\infty\} \Big| =0
		\;,\qquad
		\E [T^{-p}] < \infty\;,
	\end{equ}
	where the limit holds in $L^p(\P)$ and $\P$-almost surely (a.s.)
	and	where $A$ is smooth on $(0,T)\times \T^n$, solves \eqref{eq:A_eq}, and
	satisfies $\lim_{t\downarrow 0} |A_t-X|_{\CC^\eta} = 0$ a.s.
\end{theorem}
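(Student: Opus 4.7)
The plan is a regularity-structures style decomposition into an explicit stochastic expansion plus a smoother remainder solved by fixed point. Write $A^\e = \Psi^\e + v^\e$, where $\Psi^\e$ is the linear heat flow of $X^\e$. Parabolic smoothing combined with Assumption~\ref{ass:GF} gives that $\Psi^\e\to \Psi$ (the heat flow of $X$) uniformly in $t\in[0,1]$ in $\CC^\eta$ for any $\eta<(2-d)/2$, while $|\Psi^\e_t|_\infty \lesssim t^{\eta/2}$ uniformly in $\e$; these two facts will feed directly into the two weights in \eqref{eq:convergence}. Substituting into \eqref{eq:A_eq} gives
\begin{equation*}
\partial_t v^\e = \Delta v^\e + F(\Psi^\e+v^\e,\,D\Psi^\e+Dv^\e) - C^\e,\qquad v^\e_0 = 0,
\end{equation*}
where $C^\e$ is a renormalization counterterm absorbing the divergent pure-noise pieces generated by expansion of $F$.

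Next I would construct the renormalized stochastic data. Expanding the non-linearity isolates pure-$\Psi^\e$ objects such as $(\Psi^\e)^3$, $\Psi^\e\cdot D\Psi^\e$, $(\Psi^\e)^2$, and the iterated heat convolutions thereof which appear when one unfolds the Picard scheme. I would define Wick-renormalized versions of each; using Gaussian hypercontractivity together with the kernel bound $|C^{(k)}(x)|\lesssim |x|^{2-d-|k|}$ from Assumption~\ref{ass:GF}, one shows that they converge as $\e\downarrow 0$ in $C([0,1];\CC^\alpha)$ for an appropriate (possibly negative) $\alpha$, with moments of all orders. As $d\uparrow 4$ the list of objects one must iterate over grows, but strict subcriticality $d<4$ ensures that it remains finite.

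The third step is a Picard fixed point for $v^\e$ in a weighted Banach space of the form
\begin{equation*}
\mathcal{E}_T = \Bigl\{v\in C((0,T];\CC^\beta) : \sup_{t\in(0,T]} \bigl(t^{\gamma_1}|v_t|_{\CC^\beta} + t^{\gamma_2}|v_t|_\infty\bigr) <\infty\Bigr\},
\end{equation*}
with $\beta$ large enough that every product $v\cdot\tau$ with a stochastic datum $\tau$ is classically defined via paraproduct or Young multiplication, and weights $\gamma_1,\gamma_2$ compensating the singularity at $t=0$ dictated by $v^\e_0=0$ and the regularity of $X$. The contraction radius depends only on the norms of the stochastic data, producing a random $T>0$ uniform in $\e$. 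Parabolic Schauder estimates then bootstrap $v$, and hence $A=\Psi+v$, into $C^\infty((0,T)\times\T^n)$.

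The main obstacle is the derivative non-linearity $B(A,DA)$: substituting produces $\Psi\cdot D\Psi$ and $v\cdot D\Psi$, and the associated renormalized objects push the regularity budget closest to the critical scaling. To absorb this, as $d\uparrow 4$ one must iterate the Da Prato--Debussche substitution several times, controlling an expanding tree of stochastic objects rather than a single Picard step. Once $v^\e\to v$ in $\mathcal{E}_T$ and $\Psi^\e\to\Psi$ are in hand, \eqref{eq:convergence} follows in both weighted norms, the $t^{-\eta/2}|\cdot|_\infty$ weight being inherited from the parabolic smoothing of $\Psi$. Finally, $T$ may be chosen of the form $(1+N)^{-q}$ where $N$ is a polynomial functional of the finitely many stochastic data; Gaussian chaos bounds give $\E[N^p]<\infty$ for every $p$, hence $\E[T^{-p}]<\infty$.
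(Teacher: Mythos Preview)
Your overall architecture --- iterate a Da~Prato--Debussche style expansion finitely many times and close a fixed point for the remainder --- is exactly the paper's strategy: the paper encodes the iterated expansion by the tree-indexed sum $\CS^N_{\<Xi>} X$, packages the stochastic data into a metric space $(\CI,\Theta)$, and runs a contraction for the remainder $R$ (Theorem~\ref{thm:lwp}). The observation that the number of stochastic objects is finite precisely when $d<4$, and that $T$ can be taken as a negative power of the data norm so that $\E[T^{-p}]<\infty$, is also correct.

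The genuine gap is renormalization. You introduce a counterterm $C^\e$ and propose Wick-ordered stochastic objects, but Theorem~\ref{thm:main} concerns the solution $A^\e$ of the \emph{unmodified} equation~\eqref{eq:A_eq}: if nonzero counterterms were required, the statement would be false as written. The paper's key mechanism is that the would-be divergent contributions vanish identically by parity. Concretely, when computing second moments of $X^{\tau,\e}$ via Isserlis--Wick, any pairing that fully contracts the leaves of some sub-branch $\sigma$ of $\tau$ (a ``saturation'') produces an integral that is a function of $x_{\rho_\sigma}$ alone; translation invariance reduces it to its value at $0$, and since every $\sigma\in\CT$ carries an odd total of leaves plus $I'$-edges (Lemma~\ref{lem:oddness-of-leaves-plus-derivatives}), a saturation with an even number of leaves has an odd number of spatial derivatives, so the integrand is odd and the integral is zero. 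This uses both the specific structure $F\sim A\,DA + A^3 + \ldots$ (cf.\ Remark~\ref{rem:generalisationa}) and the evenness of $C$ and $\chi$ in Assumption~\ref{ass:GF} and the hypotheses of the theorem. Your sketch neither identifies this cancellation nor explains why the counterterms you introduce would be zero; without it, you are proving convergence of a different, renormalized, dynamic. The remaining ``safe'' pairings are then controlled by a power-counting bound on generalised convolutions (Theorem~\ref{thm:general-bounds}), which is where the actual analytic work lies and which your proposal elides.
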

\begin{remark}
The proof of Theorem \ref{thm:main} actually shows a stronger result, namely that $A^\eps$ decomposes into a perturbative part, which is a polynomial of the linear solution $\partial_t \Psi = \Delta \Psi, \Psi_0=X^\eps$ and which converges on $(0,1)\times\T^n$ in a distributional space,
and a remainder, which converges on $(0,T)\times\T^n$ as $\e\downarrow0$ in $L^\infty$.
\end{remark}
The regime $d<4$ is subcritical in the following sense.
Working over $\R^n$ and supposing for simplicity that $Q=0$ in \eqref{eq:F},
if $A$ solves \eqref{eq:A_eq}-\eqref{eq:A_ic},
then for any $\lambda>0$,
$\tilde A(t,x) \eqdef \lambda A(t\lambda^2,x\lambda)$ likewise solves \eqref{eq:A_eq}
with initial condition $\tilde X(x) = \lambda X(x\lambda)$.
A Besov space that is invariant under such transformations is the homogenous Besov space $\dot B^{-1}_{\infty,\infty}$.
This means that in $\dot B^{-1}_{\infty,\infty}$, the linearity $\Delta A$ and leading non-linearities $ADA + A^3$ in \eqref{eq:A_eq} and \eqref{eq:F_heuristic}
are of the same strength, which we call the critical regime;
in $\dot B^\eta_{\infty,\infty}$ for $\eta<-1$, the non-linearities dominate in the small scale limit $\lambda\to 0$, which we call supercritical,
while for $\eta>-1$, the linear part dominates, which we call subcritical.
Probabilistically, $\lambda X(x\lambda)$ is the scaling that keeps the GFF in dimension $d=4$ invariant in law.
These heuristics, while for $\R^n$ and homogeneous spaces, are still an important guiding principle
for other manifolds, such as $\T^n$, and for inhomogeneous spaces.

Theorem \ref{thm:main} therefore covers the entire subcritical regime $d<4$.
There is much interest in the study of local and global well-posedness for PDEs with random initial data; for a small selection of works, see \cite{Bourgain94, Burq_Tzvetkov_08_I,Burq_Tzvetkov_08_II, NPS_13_NS_random,Oh_Pocovnicu_16,Pocovnicu17,BOP19,OTW_20_4NLS,Sun_Tzvetkov_20,KLS_23,Camps_Gassot_23,Bringmann_24_Hartree,BDNY_24_Gibbs,DNY_24_Gibbs,OOT_24_NLW_GFF} and the references therein,
as well as the works in Section \ref{sec:lit} that we discuss in more detail.
Our work contributes to the body of probabilistic well-posedness results
and is the first, to our knowledge, to address non-linear heat equations in the entire subcritical regime.

We believe Theorem \ref{thm:main} is interesting in its own right and also takes a step towards the critical case $d=4$, which is often the most challenging.
For $d=4$, no such general result is expected and any sensible solution theory for an equation of the type \eqref{eq:A_eq}-\eqref{eq:A_ic} is expected to leverage the specific structure of the equation (such as the sign of the non-linearities, which we do not use).

\begin{remark} \label{rem:generalisationa}
We work with the structure \eqref{eq:F} of the non-linearity $F$ for simplicity.
However, our results extend with minimal changes to other polynomials $F(A,DA)$ with leading terms of the form $A^{2k+1} + A^k DA$.\footnote{This form of $F$ ensures that the most singular Feynman diagrams arising from Picard iterates of \eqref{eq:A_eq}-\eqref{eq:A_ic} vanish, which is important for our argument; see \eqref{eq:I^sig} and the discussion around it.
For general polynomials, we expect that $A^\eps$ may require renormalisation to converge.}
E.g. our method can handle $F=A^{k}$ with $k\geq3$ an odd integer, in which case the lower bound on $\eta$ becomes $\eta-2 < k\eta \Leftrightarrow \eta > 2/(1-k)$ which translates to $(2-d)/2 > 2/(1-k)\Leftrightarrow d<2+4/(k-1)$.
We could similarly change $\partial_t - \Delta$ in \eqref{eq:A_eq} to any parabolic operator (e.g. $\partial_t + \Delta^2$) with a sufficiently well-behaved fundamental solution
(see \eqref{eq:heat_flow_estimates} and Lemma \ref{lem:heat-flow-estimates} which, together with spatial evenness and translation invariance of the heat kernel, are the only properties of the heat operator that we use). 
\end{remark}

\subsection{Method of proof}

Our method of proof is divided into two parts, one deterministic and one probabilistic.

In the deterministic part (Section \ref{sec:well-posedness}),
we define a metric space $(\CI,\Theta)$ of distributions
to which the solution map for \eqref{eq:A_eq}-\eqref{eq:A_ic} extends in a locally Lipschitz manner (see Theorem \ref{thm:lwp}).
The space $\CI$ is defined as the set of all distributions $X$ for which sufficiently many Picard iterates $\{A^{(i)}\}_{i=0}^k$ of \eqref{eq:A_eq}-\eqref{eq:A_ic} starting with $A^{(0)}_t=\mre^{t\Delta} X$
(i.e. terms of the Wild expansion \cite{Wild51})
are well controlled.
While $\CI$ embeds into $\CC^\eta$ for $\eta<\dd$ as above,
a generic element $X\in \CC^\eta$ will not be in $\CI$ as the Picard iterates of $X$ may not even be well-defined.
This part is inspired by several earlier works that we discuss in Section \ref{sec:lit}.

In the probabilistic part, we show that mollifications of the Gaussian field $X$ as in Theorem \ref{thm:main} converge inside $(\CI,\Theta)$ (see Theorem \ref{thm:moment-estimates}).
This part is the main contribution of our paper and our arguments differ from previous works.
We use Feynman diagrams to estimate the moments of the various terms that appear in the Picard iterations of \eqref{eq:A_eq}-\eqref{eq:A_ic}.
As we are in the subcritical regime, the number of such terms is finite.

Our main tool in deriving these moment estimates is a combinatorial algorithm that deletes edges of a Feynman diagram in a way that allows us to proceed by induction.
This algorithm is carried out in the proof of the key Theorem \ref{thm:general-bounds}
and we apply it to estimate all but the most singular Feynman diagrams appearing from the Picard expansion.
For the most singular diagrams, which naive power-counting suggests are non-integrable, we show that their contributions vanish by symmetry (this is the only part where we require the covariance $C$ and mollifier $\chi$ to be even).
Apart from these combinatorial arguments,
we rely only on heat convolution estimates.
Our approach is therefore relatively lightweight and readily generalises to other equations (see Remark \ref{rem:generalisationa}).

We highlight that our stochastic estimates in Theorems \ref{thm:moment-estimates} and \ref{thm:general-bounds} bear similarity to (but neither imply nor are implied by) the bounds on generalised convolutions of \cite{hairer2018class} and the BPHZ theorems for singular SPDEs~\cite{chandra2016analytic,LOTT_24_SG, HS_24_SG}.
However, our proof is different and we notably avoid the use of multi-scale analysis as in~\cite{hairer2018class,chandra2016analytic}
or a spectral gap inequality as in \cite{LOTT_24_SG, HS_24_SG}.
Having said that, we believe it would be interesting to investigate if methods from the aforementioned works could provide an alternative proof of our main result.\footnote{Since completing
this article, we discovered an argument based on the spectral gap inequality which recovers a special case of our main stochastic estimate Theorem \ref{thm:moment-estimates} but which does not rely on Gaussianity of the noise. We intend to present this argument in another work.}

\subsection{Related works and motivations}
\label{sec:lit}

\textbf{The case $\bm{d\leq 3}$.}
	In the regime $d<3$, Theorem \ref{thm:main} is a simple consequence of
the local well-posedness of \eqref{eq:A_eq}-\eqref{eq:A_ic}
for $X\in \CC^\eta$ with $\eta>-\frac12$.
More precisely,
for any $K>0$, there exists $T>0$ sufficiently small depending polynomially on $K$,
such that the solution map $\{X\in \CC^\eta\,:\,|X|_{\CC^\eta}<K\} \ni X\mapsto A\in \CC((0,T),\CC^\eta)$ is well-defined and Lipschitz,
see e.g. \cite[Appendix~B]{Chevyrev22_norm_inf}.
Theorem \ref{thm:main} for $d<3$ thus follows
from the classical fact that mollifications of the GFF converge in $\CC^\eta$ for all $\eta<(2-d)/2$.

The case $d=3$ is more challenging and was handled
in \cite{cao2021yang,CCHS22_3D}.\footnote{\cite{cao2021yang,CCHS22_3D} considered only the DeTurck--Yang--Mills heat flow, which is a specific example of \eqref{eq:A_eq},
but the arguments therein extend without issue to generic equations of the form \eqref{eq:A_eq}.}
A difficulty in this case is that, for generic $B$ in \eqref{eq:F} (more precisely for $B$ that is not a total derivative), 
\eqref{eq:A_eq}-\eqref{eq:A_ic} exhibits norm inflation around every point in $\CC^\eta$ for all $\eta\leq-\frac12$ \cite{Chevyrev22_norm_inf} (see also \cite{COW22} for the case of the cubic heat equation),
which is a strong form of local ill-posedness.
In fact, unlike $d<3$,
the case $d=3$ cannot be handled solely with `linear' arguments in the following sense: there exists no Banach space of distributions $\mcX\subset \mcD'(\T^n,E)$ such that
(a)
$\CC^\infty\hookrightarrow \mcX$,
(b)
Fourier truncations of the 3D GFF converge in $\mcX$, and
(c) the solution map for \eqref{eq:A_eq}-\eqref{eq:A_ic} extends continuously from $\CC^\infty$ to $\mcX$ locally in time, see \cite{Chevyrev22_norm_inf}.

Theorem \ref{thm:main} therefore directly extends these results from \cite{cao2021yang,CCHS22_3D} for $d=3$
to the subcritical regime $d<4$.
Our metric space $(\mcI,\Theta)$ is inspired by these works but our probabilistic estimates are substantially different.

A key motivation of \cite{cao2021yang,CCHS22_3D} is to construct a candidate state space for the Yang--Mills (YM) measure on $\T^3$ (see also \cite{CG13,CG15,Gross2016FiniteAction,cao2024yang_state} and the survey \cite{Chevyrev22_YM};
note that the construction of the YM on $\T^3$ itself is currently an open and difficult problem).
This space supports the 3D GFF and contains distributions at which the DeTurck--Yang--Mills heat flow is locally well-posed and is therefore necessarily non-linear (cf. the GFF and YM measure on $\T^2$, for which a canonical \emph{linear} state space was defined in \cite{Chevyrev19YM, CCHS22_2D}).
The proposed state space admits a canonical notion of gauge equivalence and gauge-invariant observables
and it is shown in \cite{CCHS22_3D} that there exists a canonical Markov process associated to the stochastic quantisation equations of YM on the corresponding space of gauge orbits
(see also \cite{CCHS22_2D,ChevyrevShen23,BC23, BC24} for related work in 2D).
We speculate that the metric space $(\CI,\Theta)$ may similarly serve as a state space for quantum field theories for which naive regularisation, such as smearing against test functions, is not geometrically meaningful.

\medskip

\textbf{Weak coupling.}
Two further works related to ours are \cite{Hairer_Le_Rosati_22,GRZ23}.
In \cite{Hairer_Le_Rosati_22}, 
the Allen--Cahn equation $\partial_t A = \Delta A + A -A^3$ is studied on $[0,\infty)\times\R^n$ with initial condition $X^\eps \eqdef \eps^{\frac{n}{2}-\alpha} \chi^\eps* \xi$,
where $\alpha\in (0,1)$ and $\xi$ is a white noise on $\R^n$.
One can check that $X^\eps$ satisfies the bounds of \assu{ass:GF} uniformly in $\eps \in (0,1)$ with $d$ therein such that $\alpha = (d-2)/2$,
so that $\alpha \in (0,1)\Leftrightarrow d \in (2,4)$.
Moreover, $X^\eps\to 0$ in $\CC^\eta$ for all $\eta<\dd$. 
This suggests, and it is indeed simple to verify, that a minor variation of our results implies that the solutions $A^\eps_t$ converge to $0$ for any fixed $t>0$,
which agrees with \cite{Hairer_Le_Rosati_22} (the main result of \cite{Hairer_Le_Rosati_22} further analyses the long-time behaviour of $A^\eps$).

A similar result is obtained in \cite{GRZ23} for $n=2$ and $X^\eps = \hat\lambda |\log\eps|^{-1/2}\mre^{\eps^2\Delta}\xi$, where $\hat\lambda>0$ is a sufficiently small coupling constant.
The main result of \cite{GRZ23} is that $A^\eps_t \to 0$ for all $t>0$ sufficiently small and establishes Gaussian fluctuations around this limit.
Here $X^\eps\to 0$ in $\CC^{\eta}$ for $\eta<-1$,
so this result is weakly critical.
In particular, this means that it is not sufficient to analyse a fixed truncation of the Wild expansion as $\eps\downarrow0$ and is thus outside the scope of our methods.

While our work and the works \cite{Hairer_Le_Rosati_22,GRZ23} share similarities (e.g. they all analyse the Wild expansion of \eqref{eq:A_eq}), none of these works implies the other.

\medskip

\textbf{Singular SPDEs.}
There have been significant advances in the past decade in understanding dynamics governed by stochastic PDEs, see the seminal articles \cite{Hairer14,gubinelli2015paracontrolled} as well as \cite{DPD02_NS,Kupiainen16,OW19,Duch25}.
In highly singular regimes, there are obstructions to naively using linear distributional spaces as boundary conditions to SPDEs, see \cite{BCCH21,CCHS22_3D}.
For example, for the parabolic dynamical $\Phi^4_d$ model
with $d\ge\frac{10}{3}$, one cannot start the equation from $\CC^{\eta}$ for $\eta<\dd$, see \cite[Sec.~2.8.2]{BCCH21},
which is the optimal H\"older--Besov space in which the solution takes values.
A similar problem appears for the stochastic YM flow in 3D \cite{CCHS22_3D} and was overcome with a simple version of our metric space $(\CI,\Theta)$.

Our results therefore take a step towards understanding suitable state spaces for solutions to singular SPDEs close to criticality.
Specifically, we expect that analogues of our metric space $(\mcI,\Theta)$ can be used as spaces of initial conditions for singular SPDEs in the entire subcritical regime
with potential to study properties such as Markovianity, which is challenging in highly singular regimes \cite{HS22_Support}.

\subsection{Notations and preliminaries}\label{Subsec 1.1}

\textbf{Sets and graphs.} 
	We let $|Z|$ denote the cardinality of a set $Z$.
	We denote $\N = \{1,2,\ldots\}$ and $\N_0 = \N\cup \{0\}$.
	For any $ N\in \N$, we denote $ [N] = \{1,2,\dots, N\} $.
	For $k = (k_1,\ldots,k_n)\in\N_0^n$, we write $|k| = \sum_{i=1}^n k_i$.
	
	All graphs we consider are undirected and finite. We may write $G=(V,E)$ for a graph $G$, which means that $V,E$ are its respective vertex set and edge set.
	A \emph{forest} is a graph without cycles. Every connected component of a forest is called a \emph{tree}.
	We treat each edge $e$ of a graph as a $2$-element subset of the vertex set, and we write $e=(x,y)$ or $e=\{x,y\}$ for $x,y$ the vertices incident with the edge $e$.

	We equip $\R^n$ with the Euclidean norm $|\cdot|$ and for $x\in\T^n = \R^n/\Z^n$, we let $|x|$ denote the corresponding geodesic distance of $x$ from $0$ (which is simply the absolute value of $x$ if we identify $\T^n$ with $[-\frac12,\frac12)^n$ as a set).
	
	\textbf{Relations.}
	We write $X\lesssim Y$ to mean that there exists a constant $K>0$ such that $X\leq KY$. If $X$ and $Y$ are functions, then $K$ is assumed uniform over a given set of variables which is either specified or is clear from the context. If we have both $X\lesssim Y$ and $Y\lesssim X$, then we write $X\asymp Y$.
	We use the standard notation $X\wedge Y = \min\{X,Y\}$.

\textbf{Function spaces.} 
	We fix throughout the paper a number $d\in (2,4)$, a finite dimensional inner product space $(E,\scal{\cdot,\cdot})$
	and functions $B,P,Q$ as described after \eqref{eq:A_eq}-\eqref{eq:A_ic}.
	We suppose without loss of generality that $P$ is symmetric in its 3 arguments.

	We let $\CC,\CC^\infty,\CD'$ denote the spaces of continuous functions, smooth functions, and (Schwartz) distributions respectively.
	Unless otherwise stated, all function and distribution spaces have domain $\T^n$ and values in $E$, e.g. $\CC^\infty$ means $\CC^\infty(\T^n,E)$.

\begin{definition}\label{def:heat-kernel}
	For $X\in\CD'$, we denote by
	\begin{equ}
		\CP X \in \CC^\infty((0,\infty)\times \T^n, E)\;,
		\qquad \CP_t X \eqdef \mre^{t\Delta}X\;,
	\end{equ}
	the solution to the heat equation with initial condition $X$.
	Furthermore, for $X\in \CC^\infty([0,T]\times \T^n,E)$
	we denote by
	\begin{equ}[eq:heat_flow_space_time]
		\CP \star X \in \CC^\infty([0,T]\times \T^n, E)\;,
		\quad \CP_t \star X
		\eqdef \int_0^t \mre^{(t-s)\Delta} X_s \mrd s\;,
	\end{equ}
	the solution to the inhomogeneous heat equation with source $X$ and zero initial condition.
\end{definition}

For $\eta < 0$ we denote by $\CC^\eta$ the (inhomogeneous) H\"older--Besov space of $E$-valued distributions, defined
as the Banach space of all $X \in \mcD'(\T^n,E)$  such that
\begin{equ}
	|X|_{\CC^\eta}\eqdef 
	\sup_{\phi \in \mcB^r}\sup_{x\in\T^n}
	\sup_{\lambda\in (0,1]} \lambda^{-\eta}|\scal{X,\phi^\lambda_x}| < \infty\;,
\end{equ}
where $r=-\floor{\eta}+1$, $\mcB^r$ is the set of all  $\phi \in \CC^\infty(\T^n,E)$ with support in the ball $\{|z|<\frac14\}$ and $|\phi|_{\CC^r}\leq 1$,
and $\phi^\lambda_x (z) = \lambda^{-n}\phi((z-x)/\lambda)$.
Here and below we write $\floor x$ for the floor of $x\in \R$.

We further denote $\CC^0=L^\infty$, and for $\eta>0$, we let $\CC^{\eta}$ be the usual Banach space of functions with derivatives of order $k\eqdef \roof{\eta}-1$
(where $\roof{\eta}$ is the ceiling of $\eta$)
and whose $k$-th order derivatives are $(\eta-k)$-H\"older continuous,
e.g. $\CC^1$ consists of $X$ such that $|DX|_\infty<\infty$, i.e. Lipschitz continuous functions.

We recall the heat flow estimates for
$\eta\leq 0$ and $\gamma\geq 0$ (see, e.g.~\cite[Lem.~A.7]{gubinelli2015paracontrolled}, or the proof of~\cite[Thm.~2.34]{BookChemin}),
uniformly in $t\in(0,1)$,
\begin{equ}[eq:heat_flow_estimates]
	t^{\gamma/2}|\CP_t X|_{\CC^{\eta+\gamma}}\lesssim  |X|_{\CC^\eta}\;.
\end{equ}
\section{Deterministic well-posedness} 
\label{sec:well-posedness}

In this section, we introduce a (non-linear) metric 
space of distributions to which the solution map $X\mapsto A$ for \eqref{eq:A_eq}-\eqref{eq:A_ic}
extends in a locally Lipschitz way.
This section is entirely deterministic. We fix an integer $n\geq 1$ throughout.

We begin by defining a set $\CT$
of trees that encode the most singular terms that appear in the Picard iterations for \eqref{eq:A_eq}-\eqref{eq:A_ic}.

\begin{definition}\label{def:trees}
	For a tree $\tau$ (in the graph theoretic sense), we let $V_\tau$ and $E_\tau$ denote the vertex and edge set respectively of $\tau$ (recall that all graphs we consider are finite).
	\begin{itemize}
		\item A \emph{rooted tree} is a tree $\tau$ with a distinguished vertex $ \rho_\tau \in V_\tau$, called the root.
		\item A \emph{leaf} of a rooted tree $\tau$ is a vertex that either (a) has degree $1$ and is not the root, or (b) has degree $0$ (in which case it is necessarily the root and $\tau$ is just a single vertex).
		\item A \emph{labelled tree} is a rooted tree $\tau$ along with a map $\mfe\colon E_\tau \to \{I,I'\}$,
		where elements of $\{I,I'\}$ are formal symbols which we interpret as labels.	
		We denote by $\CL$ the set of all labelled trees.	
	\end{itemize}
	When drawing elements of $\CL$, we always draw the root at the bottom, edges with label $I$ are denoted by a thin line $\<I>\,$, and edges with label $I'$ are denoted by a thick line $\<I'>\,$.
	Vertices that are leaves are marked with a circle $\<Xi>$ and non-leaf vertices are not marked.
\end{definition}

\begin{definition}\label{def:CT_trees}
	The set $\CT \subset \CL$ is defined inductively as follows.
	We let $\<Xi> \in \CT$ (the tree with a single vertex).
	Then products of the form $\prod_{i=1}^3 I(\tau_i)$ for all $\tau_i\in\CT$ are included in $\CT$,
	where $I(\tau_i)$ is the tree obtained by adjoining the root of $\tau_i$ to a new root with an edge labelled with $I$, e.g. $I(\<Xi>) = \<IXi>$,
	and the `product' $\prod_{i=1}^3 I(\tau_i)$ is obtained by merging the roots of $I(\tau_1),I(\tau_2),I(\tau_3)$,
	e.g. $I(\<Xi>)I(\<Xi>)I(\<Xi>) = \<IXi^3>$.
	
	For all $\tau_1,\tau_2\in\CT$, we furthermore impose that $I(\tau_1)I'(\tau_2)$ is in $\CT$ 
	where $I'(\tau_2)$ is
	the tree obtained by adjoining the root of $\tau_2$ to a new root with an edge with label $I'$, e.g. $I'(\<Xi>) = \<I'Xi>$,
	and $I(\tau_1)I'(\tau_2)$ is again obtained by merging the roots $I(\tau_1)$ and $I'(\tau_2)$,
	e.g. $I(\<Xi>)I'(\<Xi>)=\<IXiI'Xi>$.

	Elements of $\CT$ are called \emph{singular} trees - see Figure~\ref{fig:trees} for some examples.
\end{definition}

\begin{figure}[H]
	\centering
	\begin{subfigure}[t]{0.2\textwidth}
		\centering
		\begin{tikzpicture}[scale=0.46]
			\node at (-.55,1) [var] (a) {}; 
			\node at (.55, 1)  [var] (b)  {};  
			\node at (-.75,-.20) [var] (c) {}; 
			\node at (.75,-.20) [var] (d) {}; 
			\draw[kernels2] (0,0) -- (b);
			\draw (0,0) -- (a);
			\draw[semithick] (0,0) -- (0,-1.1);
			\draw (c) -- (0,-1.1);
			\draw (d) -- (0,-1.1);

		\end{tikzpicture}
		\caption{}
	\end{subfigure}
	\begin{subfigure}[t]{0.21\textwidth}
	\centering
	\begin{tikzpicture}  [scale=0.43]		 
			\node at (-1,1.88)  [var] (a)  {};   
			\node at (-.25,1.98) [var] (b) {};  
			\node at (.5,1.88) [var] (c) {};  
			\node at (1.12,1.02) [var] (d) {};  
			\node at (2.,.55) [var] (e) {};  
			\node at (2.6,1.25) [var] (f) {};  
			\node at (3.5,1.25) [var] (g) {};  
			\draw (-.2,.96) -- (a); 
			\draw (-.2,.96) -- (b); 
			\draw (-.2,.96) -- (c); 
			\draw (-.2,.96) -- (.5,.1); 
			\draw[kernels2] (.5,.1) -- (d); 
			\draw (2.,-.5) -- (e); 
			\draw (.5,.1) -- (2.,-.5); 
			\draw (2.,-.5) -- (3.03,.22); 
			\draw (3.03,.22) -- (f); 
			\draw[kernels2] (3.03,.22) -- (g); 
		\end{tikzpicture} 

	\caption{}
	\end{subfigure}
	\begin{subfigure}[t]{0.19\textwidth}
		\centering
		\begin{tikzpicture}[scale=0.46]		
			
			\node at (5.05,.9)  [var] (b)  {};  
			\node at (3.95,.9) [var] (a) {}; 
			\draw (4.5,-.1) -- (b);
			\draw[kernels2] (4.5,-.1) -- (a);
			\draw[semithick] (4.5,-.1) -- (4.5,-1);

		\end{tikzpicture}
		\caption{}
	\end{subfigure}
	\begin{subfigure}[t]{0.22\textwidth}
	\centering
		\begin{tikzpicture}[scale=0.45]		
			\node at (6.2+1,.85) [var] (a) {}; 
			\node at (6.2+.35, 1.05)  [var] (b)  {}; 
			\node at (6.2-.35, 1.05)  [var] (c)  {};  
			\node at (6.2-1, .85) [var] (d) {}; 
			\node at (6.2-1.40,.25) [var] (e) {}; 
			\node at (6.2-2., -.0) [var] (f) {}; 
			\node at (6.2-2.26,-.58) [var] (g) {}; 
			\draw (6.2,0) -- (a);
			\draw (6.2,0) -- (b);
			\draw (6.2,0) -- (c);
			\draw (6.2,0) -- (d);
			\draw (6.2,0) -- (6.2-1,-.9);
			\draw (e) -- (6.2-1,-.9);
			\draw (f) -- (6.2-1,-.9);
			\draw (g) -- (6.2-1,-.9);

		\end{tikzpicture}
		\caption{}
	\end{subfigure}
	\caption{\small The labelled trees in sub-figures~(a)-(b) are elements of $\CT$ while those in (c)-(d) are not.
	}
	\label{fig:trees}
\end{figure}
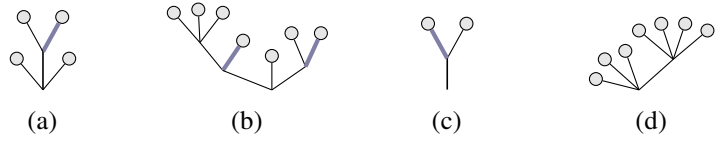
For $X\in \CC^\infty$, denote $X^{\<Xi>}=\delta_{t=0}\otimes X \in \CD'(\R\times\T^n,E)$, i.e. the distribution $X$ concentrated on the time $t=0$ hyperplane.	
We extend notation \eqref{eq:heat_flow_space_time} by writing $\CP_t \star X^{\<Xi>}=\CP_t X \in \CC^\infty([0,\infty)\times\T^n,E)$. 
For $\tau\in \CT\setminus\{\<Xi>\}$, 
we also define  $X^\tau \in \CC^\infty([0,\infty)\times \T^n,E)$ inductively by
\begin{equs}[eq:X_tau_def]
	X^{I(\tau_1)I'(\tau_2)}_t &= B(\CP_t \star X^{\tau_1},D \CP_t \star X^{\tau_2})\;,\\
	X^{I(\tau_1)I(\tau_2)I(\tau_3)}_t &= P(\CP_t \star X^{\tau_1}, \CP_t \star X^{\tau_2},\CP_t \star X^{\tau_3})\;, 
\end{equs}
where we recall the notation \eqref{eq:heat_flow_space_time}.
For example,
\begin{equ}
	P(\CP_t X)= X_t^{\<IXi^3>}, 
	\quad
	B\big(\CP_t \star B(\CP X , D\CP X), D\CP_t \star B(\CP X,D\CP X)\big) = X^{\<I[IXiI'Xi]I'[IXiI'Xi]>}_t.
\end{equ}

\begin{remark}\label{rem:isom}
	Our trees are combinatorial meaning that we do not consider an order for edges leaving a single vertex; more precisely, we identify any two trees which differ by a graph isomorphism preserving roots and labels of edges.
	Note that $X^{I(\tau_1)I(\tau_2)I(\tau_3)}$ in \eqref{eq:X_tau_def} is well-defined on the isomorphism class of $I(\tau_1)I(\tau_2)I(\tau_3)$ since we assumed $P$ is symmetric.
\end{remark}

For $\tau\in \CT$, we denote by $\noise{\tau}$ the number of leaves in $\tau$,
and, for $ i\in\N$, we define
\begin{equ}
	T_i\eqdef \{\tau\in\CT\,:\; \,\noise{\tau}=i\}\;.
\end{equ}
Furthermore, for $ N\in \N$, we denote  
\begin{equ}
\CT^N_{\<Xi>} \eqdef \bigcup_{j=1}^N T_j\;,
\qquad
\CT^N \eqdef \CT^N_{\<Xi>}\setminus\{\<Xi>\}
\;.
\end{equ}
	
\begin{definition}
\label{def:norms+init+}
	For a Banach space $(W,|\cdot|)$ and $\delta\in\R$, let $\CC_{\delta}W$  denote the Banach space of continuous functions $f\colon (0,1)\to W$ with norm
	\begin{equ}
		|f|_{\CC_\delta W} \eqdef \sup_{t\in(0,1)}t^{\delta}|f_t|\;.
	\end{equ}
	Let $X,Y\in \CC^\infty$	and 
	$\delta,\beta \in\mathbb{R}$.
	For $\tau \in \CT$ with $\tau\neq\<Xi>$, define the pseudometric
	\begin{align*}
		\fancynorm{X;Y}_{\tau;(\beta,\delta)} &\eqdef |X^\tau-Y^{\tau}|_{\CC_\delta 	\CC^\beta}\;.
	\end{align*}
	For $ N\in \N$, $\omega_{\<Xi>}\in\R$, and vectors $ \bbeta=(\beta_\tau)_{\tau\in\CT^N}\in \mathbb{R}^{\CT^N} $ and $ \bdelta=(\delta_\tau)_{\tau\in\CT^N}\in \mathbb{R}^{\CT^N}$,
	define
	\begin{equ}[eq:Theta]
		\Theta_{\omega_{\<Xi>},\bbeta,\bdelta}(X,Y)  \eqdef 
		|X-Y|_{\CC^{\omega_{\<Xi>}}}+
		\sum_{\tau\in\CT^N}\fancynorm{X;Y}_{\tau;(\beta_\tau,\delta_\tau)}\,.
	\end{equ}
	Let $\init\equiv\init_{\omega_{\<Xi>},\bbeta,\bdelta}$
	denote the completion of smooth functions under the metric $\Theta\equiv \Theta_{\omega_{\<Xi>},\bbeta,\bdelta}$.
	We further denote
	$\Theta(X) = \Theta(X,0)$.
\end{definition}

\begin{definition}\label{def:CI}
	Consider $N \in\N$,  vectors $ \bbeta=(\beta_\tau)_{\tau\in\CT_{}^N}, \bdelta=(\delta_\tau)_{\tau\in\CT_{}^N}\in \mathbb{R}^{\CT_{}^N}$, and $\omega_{\<Xi>}\le 0$.
	Define, for $\tau\in\CT^N$, $\omega_\tau \eqdef \beta_\tau-2\delta_\tau+2$.
	We say that $\omega_{\<Xi>},\bbeta,\bdelta$ satisfy condition~\eqref{eq:CI} if
	\begin{equs}\label{eq:CI}
		\begin{aligned}
			&\forall \tau\in\CT_{}^N\,:\quad \beta_\tau \in (-1,0)\;,\quad \delta_\tau < 1\;,
			\quad \omega_\tau\le 0\;, \\
			&\omega \eqdef \min\{\omega_\tau \,:\, \tau\in\CT_{\<Xi>}^N\} > -1\;,
			\\
			&\alpha \eqdef \min\{\omega_{\tau_1}+\omega_{\tau_2} \,:\, \tau_1,\tau_2 \in\CT_{\<Xi>}^N\,,\; \noise{\tau_1}+\noise{\tau_2}>N\} > -1\;,
			\\
			&\gamma  \eqdef \min\Big\{\sum_{i=1}^3\omega_{\tau_i} \,:\, \tau_i \in\CT_{\<Xi>}^N\,,\; \sum_{i=1}^3\noise{\tau_i}>N\Big\} > -2\;.
		\end{aligned}\tag{$\CI$}
	\end{equs}
\end{definition}
We remark that, for $\tau\neq\<Xi>$, $\beta_\tau>-1$ and $\omega_\tau\le 0$ together imply $\delta_\tau>1/2$.
\begin{remark}
	One should think of $\omega_\tau$ as the regularity of the space-time function $\CP \star X^\tau$ for any $\tau\in \CT^N_{\<Xi>}$ (under the parabolic scaling)
	-- see \eqref{eq:PtX_bound} below.
	The threshold $\omega>-1$ implies that $\int_0^t |(\CP_s \star X^{\tau})\otimes(\CP_s \star X^{\sigma})|_\infty \mrd s < \infty$.
	Likewise $\alpha>-1$ and $\gamma>-2$
	imply  $\int_0^t |X^{I(\tau_1)I'(\tau_2)}_s|_\infty \mrd s <\infty$
	and $\int_0^t |X^{I(\tau_1)I(\tau_2)I(\tau_3)}_s|_\infty \mrd s < \infty$ respectively for $\tau_i$ satisfying the respective conditions in \eqref{eq:CI}.
\end{remark}
We next show that $\mcI$, under condition \eqref{eq:CI},
can be realised as a subset of $\CC^{\omega_{\<Xi>}}$.
This is not entirely obvious because, a priori, elements like $X^{\<I[IXiI'Xi]I'Xi>}$
are not even well-defined for generic $X\in \mcD'$ (although $X^{\<Xi>},X^{\<IXiI'Xi>},X^{\<IXi^3>}$ are). 
	
\begin{proposition}\label{prop:closable_graph}
	Let $\omega_{\<Xi>}\in\R$, $\bbeta=(\beta_\tau)_{\tau\in\CT^N}, \bdelta=(\delta_\tau)_{\tau\in\CT^N}\in \mathbb{R}^{\CT^N}$
	such that $\beta_\tau\in(-1,0)$ and $\delta_\tau<1$ 
 for all $\tau\in\CT^N$. 
	Then the map $\CC^\infty \ni X \mapsto (X^\tau)_{\tau\in \CT^N_{\<Xi>}}$ has a closable graph in $\CC^{\omega_{\<Xi>}}\times (V_{\tau})_{\tau\in\CT^N_{\<Xi>}}$, where $V_{\<Xi>}= \CC^{\omega_{\<Xi>}}$ and $V_{\tau}=  \CC_{\delta_\tau}\CC^{\beta_\tau}$
	for $\tau\neq\<Xi>$.
In particular, $\CI$ is continuously embedded into $\CC^{\omega_{\<Xi>}}$.
\end{proposition}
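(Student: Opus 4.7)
My plan is to prove the closability by induction on the number of leaves $\noise{\tau}$; the continuous embedding into $\CC^{\omega_{\<Xi>}}$ will then follow formally. Concretely, suppose $X_n, \tilde X_n\in\CC^\infty$ with $X_n, \tilde X_n\to X$ in $\CC^{\omega_{\<Xi>}}$ and $X_n^\tau\to Y_\tau$, $\tilde X_n^\tau\to \tilde Y_\tau$ in $V_\tau=\CC_{\delta_\tau}\CC^{\beta_\tau}$ for every $\tau\in\CT^N$. I would show that for each fixed $t>0$ the time-slices $X_n^\tau(t,\cdot)$ and $\tilde X_n^\tau(t,\cdot)$ actually converge in $L^\infty$ to a common limit $Z_\tau(t)$ determined by $X$ alone. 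Since $V_\tau$-convergence implies distributional convergence of time-slices, this forces $Y_\tau(t)=Z_\tau(t)=\tilde Y_\tau(t)$ for every $t>0$, whence $Y_\tau=\tilde Y_\tau$ in $V_\tau$.

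The two key inputs are heat-flow estimates. First, for every $t>0$ and $\eta\in\R$, the map $\CP_t\colon\CC^{\omega_{\<Xi>}}\to\CC^\eta$ is bounded by \eqref{eq:heat_flow_estimates} (after the trivial embedding $\CC^{\omega_{\<Xi>}}\hookrightarrow\CC^0$ if $\omega_{\<Xi>}>0$), so $X_n\to X$ gives $\CP_t X_n\to\CP_t X$ and $D\CP_t X_n\to D\CP_t X$ in every $\CC^\eta$. Second, for any $f\in \CC_{\delta_\tau}\CC^{\beta_\tau}$ and any $\eta\in(\beta_\tau,\beta_\tau+2)$, iterating \eqref{eq:heat_flow_estimates} yields
\begin{equs}
	|\CP_t\star f|_{\CC^\eta}\lesssim |f|_{\CC_{\delta_\tau}\CC^{\beta_\tau}}\int_0^t(t-s)^{-(\eta-\beta_\tau)/2}s^{-\delta_\tau}\,\mrd s,
\end{equs}
with the integral finite because $(\eta-\beta_\tau)/2<1$ and $\delta_\tau<1$. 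The key point is that the assumption $\beta_\tau>-1$ permits $\eta\in(1,\beta_\tau+2)$, so both $\CP_t\star f$ and $D\CP_t\star f$ lie in $L^\infty$ and depend continuously on $f$.

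The induction then runs as follows. In the base case $\tau\in\{\<IXi^3>,\<IXiI'Xi>\}$, the first estimate together with the (tri-)linearity of $P$ and $B$ immediately gives $X_n^\tau(t)\to P(\CP_t X)$ or $B(\CP_t X,D\CP_t X)$ in $L^\infty$, with the same limit for $\tilde X_n^\tau(t)$. In the inductive step $\tau=I(\tau_1)I'(\tau_2)$ or $\tau=I(\tau_1)I(\tau_2)I(\tau_3)$ with $\noise{\tau_i}<\noise{\tau}$, the inductive hypothesis yields $Y_{\tau_i}=\tilde Y_{\tau_i}=:Y_i$; applying the second estimate to $f=X_n^{\tau_i}-Y_i\to 0$ in $V_{\tau_i}$ (and likewise to $\tilde X_n^{\tau_i}-Y_i$) upgrades this to $L^\infty$-convergence of $\CP_t\star X_n^{\tau_i}$ and $D\CP_t\star X_n^{\tau_i}$ to $\CP_t\star Y_i$ and $D\CP_t\star Y_i$, and the pointwise non-linearities $B,P$ pass through these $L^\infty$-limits trivially, yielding the desired common limit $Z_\tau(t)$.

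For the continuous embedding, $\Theta\geq|\cdot|_{\CC^{\omega_{\<Xi>}}}$ makes the identity on $\CC^\infty$ extend to a $1$-Lipschitz map $\iota\colon\CI\to\CC^{\omega_{\<Xi>}}$; injectivity follows by taking any Cauchy representative $X_n\in\CC^\infty$ of $\xi\in\ker\iota$ and applying closability against the zero sequence to conclude that each $X_n^\tau\to 0$ in $V_\tau$, so $\Theta(X_n)\to 0$ and $\xi=0$. The main obstacle is the inductive step: the pointwise non-linearities $B$ and $P$ force the subtree limits to be taken in $L^\infty$, and the heat convolution estimate above together with the assumptions $\beta_\tau\in(-1,0)$ and $\delta_\tau<1$ supplies just enough regularity gain to bridge $V_{\tau_i}$-convergence of $X_n^{\tau_i}$ to $L^\infty$-convergence of both $\CP_t\star X_n^{\tau_i}$ and $D\CP_t\star X_n^{\tau_i}$.
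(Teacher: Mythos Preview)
Your proposal is correct and follows essentially the same approach as the paper: both argue by induction on the tree structure, using the heat-flow smoothing estimate to upgrade $V_{\tau_i}$-convergence of the subtrees to $L^\infty$-convergence of $\CP_t\star X_n^{\tau_i}$ and $D\CP_t\star X_n^{\tau_i}$ at each fixed $t>0$, so that the pointwise nonlinearities $B,P$ pass to the limit; the paper takes $\tau=\<Xi>$ as the base case and branches on $\sigma=\<Xi>$ versus $\sigma\neq\<Xi>$ inside the inductive step, whereas you start the induction at two- and three-leaf trees. One minor point: in your inductive step you should remark that when some $\tau_i=\<Xi>$ (which can happen for $\noise{\tau}\geq 3$) you invoke your first estimate rather than the second, since $V_{\<Xi>}=\CC^{\omega_{\<Xi>}}$ and $\CP_t\star X_n^{\<Xi>}=\CP_t X_n$.
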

	
\begin{proof}
	Suppose that $(X,(X^\tau)_\tau)$ and $(Y,(Y^\tau)_\tau)$ are two Cauchy sequences (we do not write the index of the sequence here) in
	$\CC^{\omega_{\<Xi>}}\times (V_{\tau})_{\tau\in\CT^N_{\<Xi>}}$ such that $X,Y$ converge to the same element in $\CC^{\omega_{\<Xi>}}$.
	We claim that $(X^\tau)_\tau$ and $(Y^\tau)_\tau$ converge also to the same limit, from which the proof follows.

	We proceed by induction. The base case $\tau=\<Xi>$ is true by assumption since $V_{\<Xi>}=\CC^{\omega_{\<Xi>}}$. 

	For the inductive step, let 
	$\tau = I(\sigma)I'(\bar\sigma)$ where the claim is true for $\sigma,\bar\sigma$.
	We will show that $X^\tau$ and $Y^\tau$ converge to the same limit in $V_\tau$.
	For $t>0$, if $\sigma\neq \<Xi>$,
	\begin{equs}
		|\CP_t \star X^\sigma - \CP_t \star Y^\sigma|_\infty &\leq
		\int_0^t |\CP_{t-s} (X^\sigma_s-Y^\sigma_s)|_\infty \mrd s
		\\
		&\lesssim
		\int_0^t (t-s)^{\beta_\sigma/2} s^{-\delta_\sigma}|X^\sigma-Y^\sigma|_{V_\sigma} \mrd s
		\\
		&\asymp t^{\frac{\omega_\sigma}{2}}|X^\sigma-Y^\sigma|_{V_\sigma}
		\\
		&\to 0\;,
	\end{equs}
	where we used \eqref{eq:heat_flow_estimates} in the second line,
	$\beta_\sigma>-2$ and $\delta_\sigma<1$ in the third line,
	and the induction hypothesis in the final line.
	If $\sigma=\<Xi>$, we obtain the same bound $|\CP_t X^\sigma - \CP_t Y^\sigma|_\infty \lesssim t^{\frac{\omega_\sigma}{2}}|X^\sigma-Y^\sigma|_{V_\sigma} \to 0$
	directly from \eqref{eq:heat_flow_estimates}.
	Likewise,
	\begin{equ}
		|D\CP_t \star X^{\bar\sigma} - D\CP_t \star Y^{\bar\sigma}|_\infty\lesssim t^{\frac{\omega_{\bar\sigma}-1}{2}}|X^{\bar\sigma}-Y^{\bar\sigma}|_{V_{\bar\sigma}}
		\to 0\;,
	\end{equ}
	where we now use $\beta_{\bar\sigma}>-1$ and $\delta_{\bar\sigma}<1$ in the case that $\bar\sigma\neq\<Xi>$.
	Therefore, for $t>0$,
	\begin{equs}
		|X^\tau_t-Y^\tau_t|_{\infty} &=
		|B(\CP_{t} \star X^\sigma, D\CP_{t} \star X^{\bar\sigma}) - B(\CP_{t} \star Y^\sigma,
		D\CP_{t} \star Y^{\bar\sigma})|_\infty
		\\
		&\lesssim  t^{\frac{\omega_\sigma + \omega_{\bar\sigma}-1}{2}} o(1)\to 0\;.
	\end{equs}
	Because $X^\tau$ and $Y^\tau$ are assumed to be Cauchy in $V_\tau$, the above shows that they necessarily have the same limit, which completes the inductive step for $\tau = I(\sigma)I'(\bar\sigma)$.
	The case $\tau = \prod_{i=1}^3 I(\sigma_i)$ is completely analogous.
\end{proof}
\begin{remark}
	It may appear natural to substitute both $|\!\cdot\!-\!\cdot\! |_{\CC^{\omega_{\<Xi>}}}$ and $\fancynorm{\cdot;\cdot}_{\tau;(\beta,\delta)}$, $\tau\in \CT\setminus \{\<Xi>\}$, in Definition \ref{def:norms+init+} by the weaker pseudometrics
	\begin{align*}
	\sup_{t\in(0,1)} \{t^{-\frac{\omega_\tau}{2}} | \CP_t \star X^{\tau}- \CP_t\star Y^{\tau}|_{\infty} + t^{\frac{1-\omega_\tau}{2}}| D\CP_t\star X^{\tau}- D\CP_t \star Y^{\tau}|_{\infty} \}\;,
	\end{align*}
	i.e. to only look at $X$ and $Y$ \emph{after} convolution with the heat semi-group.
	An advantage of this would be that we can treat the cases $\tau=\<Xi>$ and $\tau\neq \<Xi>$ in the same way (cf. Definition~\ref{def:norms+init+}); furthermore, it is only this control on $X^{\tau}$ that we use in the local well-posedness result (Theorem~\ref{thm:lwp}).
	A disadvantage, however, is that it is unclear if the analogue of Proposition~\ref{prop:closable_graph} remains true if $X^\tau$ is replaced by $\CP \star X^{\tau}$,
	so the resulting metric space might not be realisable as a subspace of distributions.
\end{remark}
Denote
\begin{equ}[eq:SN_def]
	\CS^{N}_{\<Xi>}X \eqdef
	\sum_{\tau\in\CT^N_{\<Xi>}} c_\tau X^\tau\;,\qquad
	\CS^{N}X \eqdef \CS^N_{\<Xi>} X-X^{\<Xi>} = \sum_{\tau \in \CT^N} c_\tau X^\tau\;,
\end{equ}
where $(c_\tau)_{\tau\in\CT^N_{\<Xi>}}$ are combinatorial constants with $c_{\<Xi>}=1$ that satisfy the recursion
\begin{equ}
c_{I(\tau_1)I'(\tau_2)} = c_{\tau_1}c_{\tau_2}
\;,
\qquad
c_{I(\tau_1)I(\tau_2)I(\tau_3)} = \frac{3!}{(4-\mft)!}c_{\tau_1}c_{\tau_2}c_{\tau_3}\;,
\end{equ}
where $\mft \in \{1,2,3\}$ is the cardinality of the set
$\{\tau_1,\tau_2,\tau_3\}$ (e.g. if $\tau_1=\tau_2 \neq\tau_3$, then $\mft=2$).
Note that, if we write $\{\sigma_1,\ldots,\sigma_\mft\} = \{\tau_1,\tau_2,\tau_3\}$ for distinct $\sigma_i$, then $\frac{3!}{(4-\mft)!}$ is the multinomial coefficient $\binom{3}{k_1,\ldots,k_\mft}$ where $k_i$ is the number of times $\sigma_i$ appears in the tuple $(\tau_1,\tau_2,\tau_3)$.
Note also that $\CS^{N}X\in \CC^\infty([0,\infty)\times \T^n,E)$.

\begin{theorem}[Well-posedness of \eqref{eq:A_eq}-\eqref{eq:A_ic}] \label{thm:lwp}
	Suppose that  $N\in\N$, $\omega_{\<Xi>}\le 0$, and $ \bbeta, \bdelta \in\R^{\CT^N}\!$ satisfy condition~\eqref{eq:CI}.
	Let $\theta> 0$ be such that
	\begin{equ}[eq:theta_assump]
		\omega/2-\theta>-1/2\;,
		\quad
		\alpha/2-\theta>-1/2\;,
		\quad
		\gamma/2-\theta>-1
	\end{equ}
	(such $\theta>0$ exists due to condition \eqref{eq:CI}).
	For $T>0$, let $ \CB_T$ denote the Banach space of functions $R\in\CC([0,T],\CC(\T^n,E))$ for which
	\begin{align*}
		|R|_{\CB_T} \eqdef \sup_{t\in(0,T)}  t^{-\theta}|R_t|_\infty + t^{\frac12-\theta}|R_t|_{\CC^1} < \infty\;.
	\end{align*}
	Then there exist $\kappa,\e>0$ with the following property.
	For all $K>1$
	and $X\in\init$ such that $\Theta(X)\leq K$,
	if $T^\kappa <\e K^{-2}$,
	then there exists a unique function $R(X)\in \CB_T$ such that
	\begin{equ}
		A\eqdef R(X)+\CP\star \CS^N_{\<Xi>}X \colon (0,T]\to \CC^1(\T^n,E)
	\end{equ}
	solves  \eqref{eq:A_eq}
	with initial condition $X$
	in the sense that
	$\lim_{t\downarrow0}|A_t-X|_{\CC^\omega}=0$ where we recall $\omega \eqdef  \min_{\tau\in\CT^N_{\<Xi>}}\omega_\tau$
	and $\CS^N_{\<Xi>}X$ from \eqref{eq:SN_def}.

	Furthermore, $ |R(X)|_{\CB_T} \leq K$
	and the map $\{X\in\init \,:\, \Theta(X) \leq K \} \ni X\mapsto R(X)\in \CB_T$ is $1$-Lipschitz.
\end{theorem}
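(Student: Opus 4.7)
My plan is to recast the equation as a fixed point for the remainder $R\eqdef A-V$, where $V\eqdef \CP\star\CS^N_{\<Xi>}X$. Since $V$ is smooth on $(0,\infty)\times\T^n$ and satisfies $(\partial_t-\Delta)V=\CS^N X$ with $V_0=X$, the Duhamel form of \eqref{eq:A_eq} becomes
\[
R_t=\int_0^t\CP_{t-s}\bigl[F(R+V,DR+DV)-\CS^N X\bigr]_s\,\mrd s,\qquad R_0=0.
\]
The key simplification is that the subtraction exactly cancels the low-tree contributions: expanding $F=B+P+Q$ and then $B(R+V,DR+DV)$ and $P(R+V)$, every summand of the form $c_{\tau_1}c_{\tau_2}X^{I(\tau_1)I'(\tau_2)}$ (or its cubic analogue) produced by $B(V,DV)$ or $P(V)$ with $\sum\noise{\tau_i}\le N$ coincides, by the recursion defining $c_\tau$ and by symmetry of $P$, with a summand of $\CS^N X$ and is removed. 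What remains is $\CM(R)\eqdef\CP\star[\text{(I)}+\text{(II)}+\text{(III)}]$, where (I) are the pure-remainder terms $B(R,DR), P(R), Q(R+V)$; (II) are the mixed terms $B(R,DV), B(V,DR), 3P(R,R,V), 3P(R,V,V)$; and (III) are the residuals $\sum_{\sum\noise{\tau_i}>N} c_{\tau_1}c_{\tau_2}X^{I(\tau_1)I'(\tau_2)}$ and the analogous cubic sum.

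I then run Banach's fixed point for $\CM$ on the closed ball $\{R\in\CB_T:|R|_{\CB_T}\le K\}$. The bounds on $V$ are exactly those appearing in the proof of Proposition~\ref{prop:closable_graph}: for each $\tau\in\CT^N_{\<Xi>}$, using $\beta_\tau>-1$ and $\delta_\tau<1$, one has $|\CP_t\star X^\tau|_\infty\lesssim t^{\omega_\tau/2}K$ and $|D\CP_t\star X^\tau|_\infty\lesssim t^{(\omega_\tau-1)/2}K$, and summing over the finite set $\CT^N_{\<Xi>}$ yields $|V_t|_\infty\lesssim t^{\omega/2}K$ and $|DV_t|_\infty\lesssim t^{(\omega-1)/2}K$. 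Coupled with the heat-flow estimates $|\CP_{t-s}g|_\infty\le|g|_\infty$ and $|D\CP_{t-s}g|_\infty\lesssim(t-s)^{-1/2}|g|_\infty$, each term in (I)--(III) reduces to a Beta integral with an explicit power of $t$. The tightest contributions are the residuals: (III) yields $K^2 t^{(\alpha+1)/2}$ and $K^2 t^{\alpha/2}$ from $B$, and $K^3 t^{\gamma/2+1}$ and $K^3 t^{(\gamma+1)/2}$ from $P$; the mixed terms (II) give powers like $K^2 t^{\theta+(\omega+1)/2}$ and $K^3 t^{2\theta+\omega/2+1}$. Condition \eqref{eq:theta_assump} ensures each of these exponents strictly exceeds $\theta$ (respectively $\theta-1/2$), so for $T^\kappa\le\e K^{-2}$ with $\kappa>0$ and $\e>0$ read off from the smallest margin, $\CM$ sends the ball into itself and the same estimates applied to $\CM(R_1)-\CM(R_2)$ give a contraction. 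Banach's theorem then delivers the unique $R\in\CB_T$ with $|R|_{\CB_T}\le K$.

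For the Lipschitz dependence on $X\in\init$, applying the same estimates to the pair $(R(X),R(Y))$ and measuring the $V$-differences through $\Theta(X,Y)$ yields the $1$-Lipschitz bound, and density of $\CC^\infty$ in $\init$ extends the construction from smooth $X$ to all of $\init$. Convergence $|A_t-X|_{\CC^\omega}\to 0$ as $t\downarrow 0$ decomposes into: (a) $|R_t|_\infty\lesssim t^\theta\to 0$ via $L^\infty\hookrightarrow\CC^\omega$; (b) $\CP_t X\to X$ in $\CC^{\omega_{\<Xi>}}\supseteq\CC^\omega$ by the standard heat semi-group property; and (c) $\CP_t\star X^\tau\to 0$ in $\CC^\omega$ for each $\tau\in\CT^N$, which follows by combining $|\CP_t\star X^\tau|_{\CC^{\beta_\tau}}\lesssim t^{1-\delta_\tau}\to 0$ with an approximation by smooth $X_n\to X$ in $\init$ and the Lipschitz dependence established above. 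The main obstacle I expect is not any single estimate but the careful bookkeeping in step two: one must verify that the $s$-exponents from $V,DV,R,DR$ combine with the $(t-s)^{-1/2}$ heat-flow weight to produce powers strictly exceeding $\theta$ (resp. $\theta-1/2$) in all of (I)--(III), with the conditions \eqref{eq:CI} and \eqref{eq:theta_assump} calibrated exactly so every margin is positive. The smallest such margin determines $\kappa$, and the worst-case cubic dependence on $K$ (from $P(R,R,V)$ or the $P$-residual) yields the $\e K^{-2}$ threshold in the theorem.
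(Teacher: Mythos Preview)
Your proposal is correct and follows essentially the same approach as the paper: the same ansatz $R=A-\CP\star\CS^N_{\<Xi>}X$, the same Duhamel fixed-point map with the crucial cancellation of the low-tree terms ($\sum\noise{\tau_i}\le N$) against $\CS^N X$, the same pointwise bounds on $V,DV$ coming from $\beta_\tau>-1,\delta_\tau<1$, and the same case-by-case Beta-integral estimates with $\alpha,\gamma$ governing the residual terms. The paper organises the terms into five labelled cases rather than your (I)--(III), and for the initial-condition convergence it works in $\CC^{\omega_\tau}$ (using the exact identity $(\beta_\tau-\omega_\tau)/2=\delta_\tau-1$ so the resulting time integral is $O(1)$ uniformly in $t$) rather than $\CC^{\beta_\tau}$, but the density-plus-uniform-bound closure argument is the same as yours.
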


\begin{proof}
For $T\in (0,1)$ and $X\in\init$,
	consider the map $\CM^X\colon \CB_T \to \CB_T$
	\begin{equ}[eq:contraction_mapping_M]
		\CM^X_t(R)
		\eqdef	\CP_t \star F(A,DA) - \CP_t \star \CS^N X =
		\int_0^t \CP_{t-s} F(A_s, D A_s) \,\mrd s
		- \CP_t\star\CS^{N} X\;,
		\end{equ}
	where we denote $A_t = R_t + \CP_t \star \CS^N_{\<Xi>} X$.
	Note that if $R$ is a fixed point of $\CM^X$, then $A$ solves \eqref{eq:A_eq}-\eqref{eq:A_ic} due to the definition of $X^\tau$ and the constant $c_\tau$.
	
	We show that $ \CM_t^X$ is well-defined, maps $\CB_T$ into itself, and, for $T$ as in the statement, defines a contraction on the ball of radius $K$ of $\CB_T$.
	
	Note that $\CP\star F(A, D A) $ is a linear combination of terms of the following form: 
	\begin{enumerate}
		\item constant,
		\item linear in $(\CP \star \CS^N_{\<Xi>} X,D\CP \star \CS^N_{\<Xi>} X,R)$,
		\item bilinear in $(\CP \star \CS^N_{\<Xi>} X+R,\CP \star \CS^N_{\<Xi>} X+R)$,
		\item $\CP\star B(\CP \star \CS^N_{\<Xi>} X+R, D\CP \star \CS^N_{\<Xi>} X+DR)$,
		\item $\CP \star P(\CP \star \CS^N_{\<Xi>} X+R, \CP \star \CS^N_{\<Xi>} X+R,\CP \star \CS^N_{\<Xi>} X+R)$.
	\end{enumerate}
	We claim that, for all $\tau\in\CT^N_{\<Xi>}$,
	\begin{equs}
		|\CP_t \star X^\tau|_{\infty}
		&\lesssim
		t^{\frac{\omega_\tau}{2}} \Theta(X) \;,\label{eq:PtX_bound}
		\\
		|D\CP_t \star X^\tau|_{\infty} &
		\lesssim
		t^{\frac{\omega_\tau-1}{2}} \Theta(X) \;.\label{eq:DPtX_bound}
	\end{equs}
	Indeed, for $\tau=\<Xi>$, this
	follows directly from \eqref{eq:heat_flow_estimates}
	and the assumption $\omega_\tau\le 0$.
	On the other hand, for $\tau\in\CT^N$,
	we have
	\begin{equs}{}
		&|\CP_t \star X^\tau|_{\infty} \lesssim
		\Theta(X)\int_0^t (t-s)^{\frac{\beta_\tau}{2}}s^{-\delta_\tau} \mrd s
		\lesssim
		\Theta(X)t^{\frac{\beta_\tau}{2}-\delta_\tau+1}\;,
		\\
		&|D\CP_t \star X^\tau|_{\infty} \lesssim
		\Theta(X)\int_0^t (t-s)^{\frac{\beta_\tau}{2}-\frac12}s^{-\delta_\tau} \mrd s
		\lesssim
		\Theta(X) t^{\frac{\beta_\tau}{2}-\delta_\tau+\frac12}\;,
	\end{equs}
	where we applied \eqref{eq:heat_flow_estimates} and the assumptions $\beta_\tau\in (-1,0)$ and $\delta_\tau<1$,
	which proves the claim \eqref{eq:PtX_bound}-\eqref{eq:DPtX_bound}.	
	The bounds \eqref{eq:PtX_bound}-\eqref{eq:DPtX_bound} in particular imply
	\begin{equ}[eq:PX_DPX_bounds]
		|\CP_t \star \CS^N_{\<Xi>}X|_\infty \lesssim t^{\frac{\omega}{2}} \Theta(X)\;,\qquad
		|D\CP_t\star \CS^N_{\<Xi>}X|_\infty
		\lesssim t^{\frac{\omega-1}{2}} \Theta(X)\;.
	\end{equ}
	
	We now analyse the contribution of the terms in the 5 cases above to $\CM_t^X(R) = \CP_t \star F(A,DA)-\CP_t \star \CS^N X$ in \eqref{eq:contraction_mapping_M}.

	\textbf{Case 1} produces a term of order $C^{(1)}_t \eqdef t$ in $L^\infty$ and in $\CC^1$.
	
	\textbf{Case 2} contributes a term of order $C^{(2)}_t\eqdef t^{(\omega+1)/2}\Theta(X)+t^{1+\theta}|R|_{\CB_T}$
	in $L^\infty$, where we used \eqref{eq:PX_DPX_bounds} and the assumption
	$(\omega-1)/2>-1$.
	This case further contributes a term of order
	$t^{-1/2}C^{(2)}_t$
	in $\CC^1$, where we additionally used \eqref{eq:heat_flow_estimates}.

	\textbf{Case 3} produces a term of order $C^{(3)}_t\eqdef t^{\omega+1}\Theta(X)^2 + t^{2\theta+1}|R|_{\CB_T}^2$ in $L^\infty$ and $t^{-1/2}C^{(3)}_t$
	in $\CC^1$
	where we used again \eqref{eq:PX_DPX_bounds},
	the assumption $\omega>-1$,
	and \eqref{eq:heat_flow_estimates}.
	
	\textbf{Case 4.} Terms that are linear and quadratic in $R$ are of order
	\begin{equ}[eq:tilde_C4]
		\tilde C^{(4)}_t \eqdef t^{\frac{\omega}{2}+\theta+\frac12}\Theta(X)|R|_{\CB_T}+t^{2\theta+\frac12}|R|_{\CB_T}^2
	\end{equ}
	in $L^\infty$.
	The first term on the right-hand side of \eqref{eq:tilde_C4}
	is due to the assumptions $\omega>-1$ and $\theta> 0$ and to \eqref{eq:PX_DPX_bounds}, the latter of which implies
	\begin{equ}
		|\CP_s \star \CS^N_{\<Xi>}X|_\infty |DR_s|_\infty + |D\CP_s \star \CS^N_{\<Xi>}X|_\infty |R_s|_\infty\lesssim s^{\frac{\omega}{2}+\theta-\frac12}\;.
	\end{equ}
	The second term on the right-hand side of \eqref{eq:tilde_C4} is due to
	$|R_s|_\infty |DR_s|_\infty \leq s^{2\theta-1/2}|R|_{\CB_T}^2$.
	These terms are also of order $t^{-1/2} \tilde C^{(4)}_t$ in $\CC^1$ due to \eqref{eq:heat_flow_estimates}.
	
	Note that the terms bilinear in $\CS^N_{\<Xi>} X$ in Case 4 
	can be written as a linear combination of 
	terms of the form $\CP_t\star B(\CP \star X^{\tau_1}, D\CP \star X^{\tau_2})$ with $\tau_i \in \CT^N_{\<Xi>}$.
	We now consider two subcases:
	\begin{enumerate}[label=(4\alph*)]
		\item $\noise{\tau_1}+\noise{\tau_2}\leq N$, and
		\item $\noise{\tau_1}+\noise{\tau_2}> N$.
	\end{enumerate}

	In Case (4a),
	the term is well-defined 
	(and is of order $t^{\frac{\omega_\tau}{2}}\Theta(X)$ in $L^\infty$  due to \eqref{eq:PtX_bound} where $\tau = I(\tau_1)I'(\tau_2)\in\CT^N$),
	but, recalling the definition of $\CS^N X$ in \eqref{eq:SN_def},
	this term is cancelled by the term $-\CP_t\star c_\tau X^\tau$ in $-\CP_t\star \CS^N X$ in \eqref{eq:contraction_mapping_M} where $\tau = I(\tau_1)I'(\tau_2)$.
	
	In Case (4b), this term contributes 
	$t^{(\alpha+1)/2}\Theta(X)^2$ in $L^\infty$ and $t^{\alpha/2}\Theta(X)^2$ in $\CC^1$,
	where we used the assumption $\omega_{\tau_1}+\omega_{\tau_2}\geq \alpha>-1$
	and \eqref{eq:heat_flow_estimates}.
	In conclusion, Case 4 contributes
	\begin{equ}
		C^{(4)}_t \eqdef
		\tilde C^{(4)}_t + t^{\frac{\alpha+1}{2}}\Theta(X)^2
	\end{equ}
	in $L^\infty$ and $t^{-1/2}C^{(4)}_t$  in $\CC^1$.
	
	\textbf{Case 5.}
	Terms that are linear, quadratic, and trilinear in $R$ contribute
	\begin{equ}
		\tilde C^{(5)}_t \eqdef t^{\omega+\theta+1}\Theta(X)^2|R|_{\CB_T}
		+ t^{\frac{\omega}{2}+2\theta+1}\Theta(X)|R|_{\CB_T}^2
		+t^{3\theta+1}|R|_{\CB_T}^3
	\end{equ}
	in $L^\infty$
	and $t^{-1/2}\tilde C^{(5)}_t$ in $\CC^1$
	where we used $\omega>-1$ and $\theta> 0$ and \eqref{eq:heat_flow_estimates}.
	
	Terms that are trilinear in $\CS^N_{\<Xi>} X$ 
	can be written as a linear combination of 
	terms of the form $\CP\star P(\CP \star X^{\tau_1}, \CP \star X^{\tau_2},\CP \star X^{\tau_3})$.
	We again consider two subcases:
	\begin{enumerate}[label=(5\alph*)]
		\item $\sum_{i=1}^3\noise{\tau_i}\leq N$, and
		\item $\sum_{i=1}^3\noise{\tau_i}> N$.
	\end{enumerate}

	In Case (5a), this term is well-defined
	(and is of order $t^{\omega_\tau/2}\Theta(X)$ in $L^\infty$ due to \eqref{eq:PtX_bound} where 
	$\tau = \prod_{i=1}^3I(\tau_i)\in\CT^N$),
	but recalling again the definition of $\CS^N X$ in \eqref{eq:SN_def},
	this term is cancelled by the term $-\CP_t\star c_\tau X^\tau$ in $-\CP_t\star \CS^N X$ in \eqref{eq:contraction_mapping_M} where $\tau = I(\tau_1)I(\tau_2)I(\tau_3)$.
	
	In Case (5b), the contribution is of order $t^{\gamma/2+1}\Theta(X)^3$ in $L^\infty$ and $t^{(\gamma+1)/2}\Theta(X)^3$ in $\CC^1$,
	where we used the assumption $\sum_{i=1}^3\omega_{\tau_i} \geq \gamma  > -2$
	and \eqref{eq:heat_flow_estimates}.
	In conclusion, Case 5 contributes
	\begin{equ}
		C^{(5)}_t \eqdef \tilde C^{(5)}_t +
		t^{\frac{\gamma}{2}+1}\Theta(X)^3
	\end{equ}
	in $L^\infty$ and $t^{-1/2}C^{(5)}_t$ in $\CC^1$.
	
	Note that $-\CP \star \CS^N X$ in \eqref{eq:contraction_mapping_M} is completely cancelled by cases (4a) and (5a) above.
	
	Combining the above cases, we have the estimate
	\begin{equ}[eq:M_inf]
		t^{-\theta}|\CM^X_t(R)|_\infty
		\lesssim t^{-\theta} \sum_{i=1}^5 C^{(i)}_t 
		\lesssim t^\kappa (1+\Theta(X)^3 + |R|_{\CB_T}^3)\;,
	\end{equ}
	where
	\begin{equ}
		\kappa \eqdef \min\{(\omega+1)/2,2\theta+1/2,(\alpha+1)/2,\gamma/2+1\}-\theta
	\end{equ}
	(we used here the estimates on $C^{(i)}_t$ above and the assumptions $\theta> 0$ and $\omega>-1$).
	The same estimate holds for $t^{-\theta+1/2}|\CM_t^X(R)|_{\CC^1}$, so in conclusion
	\begin{equ}
		|\CM^X(R)|_{\CB_T} \lesssim T^\kappa (1+\Theta(X)^3 + |R|_{\CB_T}^3)\;.
	\end{equ}
	We remark that $\kappa>0$
	due to \eqref{eq:theta_assump}.
	It follows that there exists $\e>0$ such that, for all $K>1$ and $X\in\init$ with $\Theta(X)\leq K$,
	if $T^{\kappa}\leq \e K^{-2}$,
	then $\CM^X$ stabilises the ball $\{R\in\CB_T\,:\,|R|_{\CB_T}\leq K\}$.

	Furthermore, for another $\bar X\in\init$ with $\Theta(\bar X)\leq \Theta(X)$ and $\bar R\in\CB_T$,
	almost the same considerations imply that
	\begin{equs}[eq:M_X_diffs]
		|\CM^X(R) - \CM^{\bar X}(\bar R)|_{\CB_T}
		&\lesssim T^\kappa \Theta(X,\bar X)(1+\Theta(X)^2)
		\\
		&\quad +
		T^\kappa |R-\bar R|_{\CB_T}(1+\Theta(X)^2+|R|_{\CB_T}^2 + |\bar R|_{\CB_T}^2)\;.
	\end{equs}
	Taking $\bar X=X$, we obtain that $\CM^X$ is a contraction on the ball $\{R\in\CB_T\,:\,|R|_{\CB_T}\leq K\}$
	whenever $T^\kappa < \e K^{-2}$ for $K>1$ and $\Theta(X)\leq K$ and $\e>0$ sufficiently small.
	It follows from Banach's fixed point theorem that there exists a unique fixed point $R$ to $\CM^X$ in this ball.
	Uniqueness in all of $\CB_T$ follows in a standard way by restarting the equation.
	The claimed Lipschitz estimate $|R-\bar R|_{\CB_T}\leq \Theta(X,\bar X)$ for $\eps$ sufficiently small follows from
	taking $R$ and $\bar R$ in \eqref{eq:M_X_diffs} as the unique fixed points.
		
	It remains to show $\lim_{t\downarrow0}|A_t-X|_{\CC^\omega}=0$.
	We first remark that, since $A_t = R_t + \CP_t \star \CS^N_{\<Xi>} X$ and we know that $R$ is the fixed point of $\CM^X$, we have
	\begin{equ}[eq:A_t-X]
		A_t-X= \CP_t X-X + \CP_t\star \CS^N X + \CM_t^X(R)\;.
	\end{equ}
	We claim that for all $\tau\in\CT^N$,
	\begin{equ}[eq:PX_lim]
		\lim_{t\downarrow 0}|\CP_t \star X^\tau|_{\CC^{\omega_\tau}} = 0\;,
	\end{equ}
	which, since $\omega\le\omega_{\tau}$, together with the triangle inequality yields
	\begin{equ}[eq:PS_lim]
		\lim_{t\downarrow 0}|\CP_t\star \CS^NX|_{\CC^\omega} = 0\;.
	\end{equ} 
	Suppose that $Y,\bar Y\in (\init,\Theta)$ and $\tau\in\CT^N$. Recalling the definition of $\omega_{\tau} = \beta_\tau-2\delta_\tau+2$ and $\Theta$ (Definition \ref{def:norms+init+}) and using the assumption $\delta_\tau\in(0,1)$, we have, uniformly in $t\in (0,1)$,
	\begin{equ}
	  	|\CP_t \star Y^\tau-\CP_t \star \bar Y^\tau|_{\CC^{\omega_{\tau}}} \lesssim
	 	 \Theta(Y,\bar Y)\int_0^t (t-s)^{\frac{\beta_\tau-\omega_{\tau}}{2}}s^{-\delta_\tau} \mrd s\lesssim\Theta(Y,\bar Y)\;.
	\end{equ}
	If $\lim_{t\downarrow 0 }|\CP_t \star Y^\tau |_{\CC^{\omega_\tau}} = 0$,
	it follows that $\lim_{t\downarrow 0 }|\CP_t \star \bar Y^\tau |_{\CC^{\omega_\tau}} \lesssim \Theta(Y,\bar Y)$,
	and therefore the space of all $Z\in\init$ such that
	$\lim_{t\downarrow 0 }|\CP_t \star Z^\tau|_{\CC^{\omega_\tau}} = 0$
	is closed in $\CI$.
	Since the space of smooth functions in dense in $\init$ and since all smooth functions $Z$ satisfy $\lim_{t\downarrow 0 }|\CP_t \star Z^\tau|_{\CC^{\omega_\tau}} = 0$, we obtain \eqref{eq:PX_lim} for all $X\in\init$. This proves the claim and thus also \eqref{eq:PS_lim}.

	The conclusion now follows from \eqref{eq:M_inf}, \eqref{eq:A_t-X}, \eqref{eq:PS_lim}, and the fact that
	$
		\lim_{t\downarrow 0}|\CP_t X - X|_{\CC^{\omega}} = 0
	$
	which holds since $X \in \init$ belongs to the closure of smooth functions in $\CC^{\omega_{\<Xi>}}$. 
\end{proof}
	
\section{Probabilistic estimates}
\label{sec:prob_estimates}

In this section, we show that, for any random field $X$ satisfying \assu{ass:GF}, there exist $N\in\N$, $\omega_{\<Xi>}\in\R$, and $\bbeta,\bdelta\in\R^{\CT^N}$ such that mollifications $X^\e$ converge in the state space $\init_{\omega_{\<Xi>},\bbeta,\bdelta}$ almost surely
and in $L^p(\P)$ for all $p\ge 1$.
We make this statement precise in \theo{thm:moment-estimates} and its Corollary~\ref{corr:convergence-of-mollifications}, which are the main results of this section.
We use Corollary~\ref{corr:convergence-of-mollifications} in Section \ref{sec:main_proof} in the proof of the main Theorem \ref{thm:main}.
	
In the sequel, we fix an integer $n\ge 2$ as the dimension of the underlying space $\T^n$.
Recall that we have also fixed $d\in(2,4)$.
For the space of trees $\CL$ from \defi{def:trees},
we define $|\!\cdot\!| \colon \CL \to\R$
recursively by $|\<Xi>|=(2-d)/2-2$ and
\begin{equ}[eq:homogeneity]
		|\tau\sigma| = |\tau|+|\sigma|\quad  \text{ where }\quad |I(\tau)|=|\tau|+2\;,\quad |I'(\tau)|=|\tau|+1\;.
	\end{equ}
Note that applying the identity $|\tau\sigma| = |\tau|+|\sigma|$ twice implies $|\tau_1\tau_2\tau_3| = \sum_{i=1}^3|\tau_i|$.
\begin{remark}\label{rem:homogeneity}
	An easy induction implies that $|\tau| = \noise{\tau}(2-d)/2+\noise{\tau}-3$ for all $\tau\in\CT$ (see \defi{def:CT_trees}), where we recall that $\noise{\tau}$ is the number of leaves in $\tau$.
\end{remark}

\begin{theorem}\label{thm:moment-estimates}
	Suppose that $X$ satisfies \assu{ass:GF}.
	Let $\chi^\e$ be as in \theo{thm:main} and denote $X^{\tau,\e}\eqdef (\chi^\e * X)^\tau$ for any $\tau\in\CT$. 	Let $\tau\in\CT\setminus\{\<Xi>\}$,  $\beta\in[\dd,0]$, $\kappa\in[0,4-d)$, and $\delta= -|\tau|/2+\beta/2+\kappa/4$. 
	Then for all $p\geq 1$,  all test functions $\phi\in\CC^\infty(\T^n)$ supported in $\{z\in\T^n\,:\,|z|\leq \frac14\}$, and uniformly in $0<\bar \e\le\e<1$, $0< s\le t<1$, $\lambda\in(0,1]$, and $z\in\T^n$,
	\begin{equ}[eq:Z_eps_diff]
		\| \scal{t^\delta X^{\tau,\e}_t-t^\delta X^{\tau,\bar\e}_t , \phi^\lambda_z} \|_{L^p} \lesssim |\e-\bar\e|^{\frac{\kappa\wedge 1}{2}} \lambda^{\beta}
	\end{equ}
		and
	\begin{equ}[eq:Z_time_diff]
		\|  \scal{t^\delta X_t^{\tau,\e} -s^\delta X_s^{\tau,\e},\phi^\lambda_z} \|_{L^p} \lesssim |t-s|^{\frac{\kappa}{4}} \lambda^\beta\;,
	\end{equ}
	where $\|\! \cdot \!\|_{L^p} \eqdef (\E|\! \cdot \! |^p)^{1/p}$
	is the stochastic $L^p$-norm and $\phi^\lambda_z (\cdot) \eqdef \lambda^{-n}\phi((\cdot -z)/ \lambda)$.
\end{theorem}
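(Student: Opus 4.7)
The plan rests on three ingredients: Gaussian hypercontractivity to reduce to second moments, a Feynman-diagram representation of those second moments, and the combinatorial bounds of Theorem~\ref{thm:general-bounds} supplemented by symmetry arguments for the marginal diagrams.

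\textbf{Step 1: reduction to $L^2$ and kernel representation.} By induction along \eqref{eq:X_tau_def}, $X^{\tau,\e}_t$ is a polynomial of degree $\noise{\tau}$ in the Gaussian field $X$. Consequently $\scal{X^{\tau,\e}_t,\phi^\lambda_z}$ lies in a finite sum of Wiener chaoses of order at most $\noise{\tau}$, and Gaussian hypercontractivity reduces both~\eqref{eq:Z_eps_diff} and~\eqref{eq:Z_time_diff} to the case $p=2$ with a constant depending only on $p$ and $\noise{\tau}$. One then writes
\begin{equ}
X^{\tau,\e}_t(x) = \int_{(\T^n)^{\noise{\tau}}}\mcK^{\tau,\e}_t(x;y_1,\dots,y_{\noise{\tau}})\prod_{i=1}^{\noise{\tau}}X(y_i)\,\mrd y\;,
\end{equ}
where $\mcK^{\tau,\e}_t$ is an explicit deterministic kernel built by composing heat kernels $\mcP$ (for $I$-edges) and spatial derivatives $D\mcP$ (for $I'$-edges) along $\tau$, with a mollifier $\chi^\e$ attached at each leaf. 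Testing against $\phi^\lambda_z$ and expanding $\mathbf{E}|\scal{X^{\tau,\e}_t,\phi^\lambda_z}|^2$ via Wick's theorem produces a sum over perfect pairings $\pi$ of the $2\noise{\tau}$ leaves of two disjoint copies of $\tau$; each pairing yields a Feynman diagram whose edges are either heat-type kernels, the test function $\phi^\lambda_z$, or covariance factors $C$ supplied by $\pi$.

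\textbf{Step 2: generic diagrams and differences.} For each such diagram, the parabolic homogeneities of the constituent kernels sum to exactly the right total thanks to the inductive identity~\eqref{eq:homogeneity}: combining with the prefactor $t^{2\delta}$ and the choice $\delta=-|\tau|/2+\beta/2+\kappa/4$, a direct power-counting identifies $\lambda^{2\beta}$ as the target homogeneity, with a remaining $t^{\kappa/2}$ loss that is integrable because $\kappa<4-d$. Theorem~\ref{thm:general-bounds} is then applied to each diagram and delivers the desired bound, with the role of Gaussianity having already been discharged in Step~1. To extract the difference estimate~\eqref{eq:Z_eps_diff} I would telescope $\mcK^{\tau,\e}_t-\mcK^{\tau,\bar\e}_t$ leaf by leaf, so that each summand carries exactly one factor of $\chi^\e-\chi^{\bar\e}$; interpolating between the trivial bound and the estimate $|(\chi^\e-\chi^{\bar\e})*f|\lesssim|\e-\bar\e|^{\kappa\wedge1}|f|_{\CC^{\kappa\wedge 1}}$ at the diagram level yields the factor $|\e-\bar\e|^{(\kappa\wedge1)/2}$. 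Estimate~\eqref{eq:Z_time_diff} proceeds analogously by telescoping in time and using $|\mcP_t-\mcP_s|$ to supply a factor $|t-s|^{\kappa/4}$ at the cost of $\kappa/2$ units of space regularity, after which Theorem~\ref{thm:general-bounds} again closes the estimate.

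\textbf{Step 3: the most singular diagrams.} The main obstacle is that naive power-counting fails for a small number of pairings $\pi$ whose associated subdiagrams are, in the parabolic scaling, too singular to be integrable. These are precisely the pairings in which the leaves paired across the two copies of $\tau$ sit under a derivative edge $I'$ whose companion edge is an ordinary $I$-edge reaching the root of the same subtree, producing a kernel of the form $D\mcP*C$ integrated against a symmetric piece. Here I would invoke the evenness $C(x)=C(-x)$ and $\chi(x)=\chi(-x)$ assumed in Assumption~\ref{ass:GF} and Theorem~\ref{thm:main}: the integrand is then odd in the contracted variable after translation, so the contribution vanishes identically. This is exactly the cancellation that removes the need for renormalisation and mirrors the structural assumption that $F$ only contains the `balanced' leading terms $ADA+A^3$; see the footnote to Remark~\ref{rem:generalisationa}. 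Once these marginal diagrams are discarded, the remaining diagrams are handled uniformly by Step~2, and concatenating the pieces gives~\eqref{eq:Z_eps_diff} and~\eqref{eq:Z_time_diff}.
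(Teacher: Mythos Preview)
Your high-level architecture matches the paper: hypercontractivity to reduce to $p=2$, Wick expansion of the second moment into a sum over complete contractions of the leaves of two copies of $\tau$, a symmetry argument to kill the dangerous diagrams, and Theorem~\ref{thm:general-bounds} for the rest. The leaf-by-leaf telescoping in $\e$ is also how the paper proceeds, and for \eqref{eq:Z_time_diff} the paper uses the splitting \eqref{eq:time-diff_split} together with the $\K$-bound \eqref{eq:general-bound_diff}, which is precisely designed to handle heat-kernel differences at the edges adjacent to the roots.

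Where you go astray is Step~3. The diagrams to which Theorem~\ref{thm:general-bounds} cannot be applied are not characterised by a single pair of leaves sitting under a derivative edge; they are the \emph{non-safe} contractions in the sense of Definition~\ref{def:saturation-safe}, namely those admitting a branch $\sigma$ whose leaves are all paired \emph{within} $\sigma$. For such a saturated branch no contracting edge leaves $\sigma$, so the $\min$ in condition~\ref{pt:branches} of Theorem~\ref{thm:general-bounds} is $0$ by convention, and the condition degenerates to $\Lambda(\sigma)>-1$; this fails near $d=4$ (already for $\noise{\sigma}=2$ one has $\Lambda(\sigma)\approx(1-d)/2$). The vanishing of these terms is not an edge-local cancellation but a global parity argument: Lemma~\ref{lem:oddness-of-leaves-plus-derivatives} shows that for every $\sigma\in\CT$ the number of leaves plus the number of $I'$-edges is odd, so a saturated branch (necessarily with an even number of leaves) carries an \emph{odd} number of derivative edges. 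Since $C$, $\chi$, and the heat kernel are spatially even, the factor \eqref{eq:I^sig} is odd under $(x_v)_{v\in V^*_\sigma}\mapsto(-x_v)_{v\in V^*_\sigma}$ after translating the root to the origin, and hence integrates to zero. Your description would select the wrong set of diagrams and would not supply the needed parity. Note also that this step must logically precede the application of Theorem~\ref{thm:general-bounds}, since it is exactly what allows condition~\ref{pt:branches} to be verified for the surviving (safe) contractions.
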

The proof of \theo{thm:moment-estimates} is broken up into several steps and is given at the end of this section.
Before proceeding to the proof, we give a corollary of \theo{thm:moment-estimates}.
\begin{corollary}\label{corr:convergence-of-mollifications}
	Let $N\in\N$, $\omega_{\<Xi>}<\dd$,
	and, for $\tau\in\CT^N$,
	$\beta_\tau \in [(2-d)/2,0)$
	and $\delta_\tau>-|\tau|/2+\beta_\tau/2$. Then, for $X$ satisfying \assu{ass:GF},
	the mollifications $X^{\e}$ defined in \theo{thm:moment-estimates} converge as $\e\downarrow 0$ in $(\init_{\omega_{\<Xi>},\bbeta,\bdelta},\Theta_{\omega_{\<Xi>},\bbeta,\bdelta})$ in $L^p(\P)$ for all $p\in [1,\infty)$ and $\P$-a.s.,
	where $ \bbeta\eqdef(\beta_\tau)_{\tau\in\CT^N}$ and $ \bdelta\eqdef(\delta_\tau)_{\tau\in\CT^N}$.
\end{corollary}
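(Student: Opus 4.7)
The plan is to establish Cauchy-ness of $(X^\e)_{\e > 0}$ with respect to $\Theta_{\omega_{\<Xi>}, \bbeta, \bdelta}$ both in $L^p(\P)$ and $\P$-almost surely, and then to invoke completeness of $(\init, \Theta)$. The metric $\Theta$ decouples into a $\CC^{\omega_{\<Xi>}}$ distance on the $\tau = \<Xi>$ slot and a $\CC_{\delta_\tau} \CC^{\beta_\tau}$ distance for each $\tau \in \CT^N$, and these can be treated independently.

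For the $\tau = \<Xi>$ slot, the hypothesis $\omega_{\<Xi>} < (2-d)/2$ together with \assu{ass:GF} gives $X^\e \to X$ in $\CC^{\omega_{\<Xi>}}$ in $L^p(\P)$ and $\P$-a.s.\ by a classical first-Wiener-chaos Kolmogorov argument applied to the covariance of $X^\e - X^{\bar\e}$, controlled via the bound $|C^{(k)}(x)| \lesssim |x|^{2-d-|k|}$.

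For each $\tau \in \CT^N$, I use the assumption $\delta_\tau > -|\tau|/2 + \beta_\tau/2$ to pick $\beta \in (\beta_\tau, 0]$ close to $\beta_\tau$ and $\kappa \in (0, 4-d)$ small enough that
\[
\delta := -|\tau|/2 + \beta/2 + \kappa/4 \leq \delta_\tau.
\]
Since $t^{\delta_\tau} \leq t^{\delta}$ on $(0, 1)$, the norm $|\cdot|_{\CC_{\delta_\tau} \CC^{\beta_\tau}}$ is dominated by $|\cdot|_{\CC_{\delta} \CC^{\beta_\tau}}$, so it suffices to prove convergence in the latter. Theorem~\ref{thm:moment-estimates} applied with these parameters furnishes the two moment bounds \eqref{eq:Z_eps_diff}--\eqref{eq:Z_time_diff} for the time-weighted process $Y^\e_t := t^\delta X^{\tau, \e}_t$, with spatial scale $\lambda^\beta$, an $|\e - \bar\e|^{(\kappa \wedge 1)/2}$ Hölder factor in the mollification parameter, and a $|t-s|^{\kappa/4}$ Hölder factor in time. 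Since each $X^{\tau, \e}$ lies in the fixed Wiener chaos of order $\noise{\tau} \leq N$, Gaussian hypercontractivity upgrades these bounds to all $p \geq 1$.

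A standard Kolmogorov / Besov-embedding argument, applied to $Y^\e$ using test functions at dyadic scales in both space and time, then converts these pointwise-in-$(t, z, \lambda)$ moment bounds into
\[
\E \bigl[\, |X^{\tau, \e} - X^{\tau, \bar\e}|_{\CC_\delta \CC^{\beta_\tau}}^p \bigr]^{1/p} \lesssim |\e - \bar\e|^{\sigma}
\]
for some $\sigma > 0$, provided $p$ is chosen large enough that $\beta - n/p > \beta_\tau$ so that the Besov-loss from Kolmogorov still lands in $\CC^{\beta_\tau}$. Cauchy-ness in $L^p(\P)$ follows, and $\P$-a.s.\ Cauchy-ness is then obtained by Borel--Cantelli along a geometric subsequence $\e_k = 2^{-k}$ using the geometric decay $|X^{\tau, \e_k} - X^{\tau, \e_{k+1}}|_{\CC_\delta \CC^{\beta_\tau}} \lesssim 2^{-k \sigma}$ in $L^p$. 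Assembling all slots $\tau \in \CT^N \cup \{\<Xi>\}$ and invoking completeness of $(\init, \Theta)$ yields the claimed convergence. The principal technical obstacle is the joint space-time Kolmogorov upgrade in the weighted space $\CC_\delta \CC^{\beta_\tau}$ --- one must handle the $t^\delta$ weight so that testing against space-time wavelets gives a uniform bound on $\sup_{t \in (0,1)} t^\delta |\cdot|_{\CC^{\beta_\tau}}$ --- but once hypercontractivity and the two moment estimates of Theorem~\ref{thm:moment-estimates} are in hand, this is routine.
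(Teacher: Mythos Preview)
Your approach is correct and matches the paper's overall strategy. Two points of comparison are worth noting.

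First, for the passage from the pointwise moment bounds \eqref{eq:Z_eps_diff}--\eqref{eq:Z_time_diff} to control of the $\CC_{\delta_\tau}\CC^{\beta_\tau}$ norm, the paper avoids any ``joint space-time'' Kolmogorov argument and instead proceeds in two decoupled steps: a purely spatial Kolmogorov argument at each fixed $t$ yields $\||Z^\e_t - Z^{\bar\e}_t|_{\CC^{\beta_\tau}}\|_{L^p} \lesssim |\e-\bar\e|^{\kappa/2}$ and $\||Z^\e_t - Z^\e_s|_{\CC^{\beta_\tau}}\|_{L^p} \lesssim |t-s|^{\kappa/4}$ (with $Z^\e_t = t^{\delta_\tau} X^{\tau,\e}_t$); one then interpolates these via $a\wedge b \le a^{1/2}b^{1/2}$ to get a two-parameter H\"older increment bound, applies the classical Kolmogorov continuity theorem to the $\CC^{\beta_\tau}$-valued process $t \mapsto Z^\e_t - Z^{\bar\e}_t$, and finally sends $s\to 0$ to extract the supremum over $t$. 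This makes the $t^{\delta_\tau}$ weight transparent and dissolves what you flag as the ``principal technical obstacle''.

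Second, your Borel--Cantelli argument yields a.s.\ convergence only along the dyadic subsequence $\e_k = 2^{-k}$; to conclude a.s.\ convergence for \emph{all} $\e\downarrow 0$ you still need to control $\sup_{\e\in[\e_{k+1},\e_k]}\Theta(X^\e,X^{\e_k})$, which you have not addressed. The paper instead applies Kolmogorov's theorem directly in the parameter $\e\in(0,1)$, obtaining $\bigl\|\sup_{0<\bar\e<\e<1}|\e-\bar\e|^{-\kappa/8}\Theta(X^\e,X^{\bar\e})\bigr\|_{L^p}<\infty$, from which a.s.\ Cauchy-ness as $\e\downarrow 0$ follows immediately.
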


\begin{proof}
	We first estimate the difference between $X^\e$ and $X^{\bar\e}$ in the metric space $(\init,\Theta)\equiv(\init_{\omega_{\<Xi>},\bbeta,\bdelta},\Theta_{\omega_{\<Xi>},\bbeta,\bdelta})$ in $L^p(\P)$.
	We can assume that $\delta_\tau \leq -|\tau|/2$ since the metric $\Theta_{\omega_{\<Xi>},\bbeta,\bdelta}$ becomes
	weaker as $\delta_\tau$ increases.
	
	Take $\kappa\in(0,1\wedge(4-d))$ such that $\omega_{\<Xi>}<\dd-\kappa/2$,
	and $\delta_\tau>-|\tau|/2+\beta_\tau/2+\kappa/4$ for all $\tau\in\CT^N$.
	Then \eqref{eq:Z_eps_diff} and \eqref{eq:Z_time_diff} (applied with $\beta\in(\beta_\tau,0)$ such that $\delta_\tau = -|\tau|/2+\beta/2+\kappa/4$)
	and a Kolmogorov-type argument (see e.g. \cite[Thm.~2.7]{ChandraWeber17})
	imply that, for all $p\ge 1$ and $\tau\in\CT^N$, denoting $Z^{\tau,\e}_t = t^{\delta_\tau} X_t^{\tau,\e}$,
	\begin{equ}
		\||Z^{\tau,\e}_t-Z^{\tau,\bar\e}_t|_{\CC^{\beta_\tau}} \|_{L^p} \lesssim |\e-\bar\e|^{\kappa/2}
	\end{equ}
	and 
	\begin{equ}
		\big\||Z_t^{\tau,\e} - Z_s^{\tau,\e}|_{\CC^{\beta_\tau}}\big\|_{L^p}\lesssim |t-s|^{\kappa/4},
	\end{equ}
	uniformly in $0<\bar \e<\e<1$ and $0\le s\le t<1$.
	Therefore
	\begin{equ}
		\| |Z_t^{\tau,\e}-Z_t^{\tau,\bar\e} - (Z_s^{\tau,\e}-Z_s^{\tau,\bar\e}) |_{\CC^{\beta_\tau}}\big| \|_{L^p} \lesssim |\e-\bar\e|^{\kappa/2}\wedge |t-s|^{\kappa/4} \leq |\e-\bar\e|^{\kappa/4}|t-s|^{\kappa/8}\;.
	\end{equ}
	Hence, by Kolmogorov's theorem (see e.g.~\cite[Thm.~A.10]{FV10}) applied to the $\CC^{\beta_\tau}$-valued stochastic process $t\mapsto Z_t^{\tau,\e}-Z_t^{\tau,\bar\e}$, we have
	\begin{equ}
		\Big\| \sup_{s\neq t\in (0,1)} |t-s|^{-\kappa/16}|Z_t^{\tau,\e}-Z_t^{\tau,\bar\e} - (Z_s^{\tau,\e}-Z_s^{\tau,\bar\e})|_{\CC^{\beta_\tau}} \Big\|_{L^p} \lesssim |\e-\bar\e|^{\kappa/4}\;.
	\end{equ}
	In particular, sending $s$ to $0$,
	we obtain, uniformly in $t\in (0,1)$ and $0<\bar \e<\e<1$,
	\begin{equ}
	\Big\|\sup_{t\in (0,1)}|Z_t^{\tau,\e}-Z_t^{\tau,\bar\e}|_{\CC^{\beta_\tau}}\Big\|_{L^p} \lesssim |\e-\bar\e|^{\kappa/4}\;,
	\end{equ}
	or equivalently,
	\begin{equ}[eq:Lp-eps-diff]
		\big\|\fancynorm{X^{\e};X^{\bar\e}}_{\tau;(\beta_\tau,\delta_\tau)}\|_{L^p}\lesssim |\e-\bar\e|^{\kappa/4} \;.
	\end{equ}
	On the other hand, by \lem{lem:covar_X^e-X} below and using again a Kolmogorov-type argument, we have, for any $\omega_{\<Xi>}<\dd-\kappa/2$,
	\begin{equ}[eq:Lp-eps-diff.]
		\||X^{\e}-X^{\bar\e}|_{\CC^{\omega_{\<Xi>}}} \|_{L^p} \lesssim |\e-\bar\e|^{\kappa/2}\;.
	\end{equ}
	Consequently, recalling the definition of $\Theta$ in \eqref{eq:Theta}, estimates \eqref{eq:Lp-eps-diff}-\eqref{eq:Lp-eps-diff.} imply that 
	$
		\|\Theta(X^{\e},X^{\bar\e})\|_{L^p} \lesssim |\e-\bar\e|^{\kappa/4}
	$,
	which shows the desired $L^p(\P)$-convergence.
	To conclude a.s.-convergence, Kolmogorov's theorem applied to the parameter $\eps\in(0,1)$ implies that
	$\|\sup_{0<\eps<\bar\eps<1}|\eps-\bar\eps|^{-\kappa/8}\Theta(X^{\e},X^{\bar\e})\|_{L^p} < \infty$ for all $p\in [1,\infty)$.
\end{proof}
We used above the next lemma. Its proof is standard but is included for completeness.
\begin{lemma}\label{lem:covar_X^e-X}
	Suppose that $X$ satisfies \assu{ass:GF}. Let $X^\e\eqdef\chi^\e * X$, where $\chi^\e$ is as in \theo{thm:main}. Then for all $p \in [1,\infty)$
	\begin{equ}
		\| \scal{X^\e-X^{\bar\e}, \phi^\lambda_z} \|_{L^p}  \lesssim |\e-\bar\e|^{\kappa/2} \lambda^{1-d/2-\kappa/2}
	\end{equ}
	uniformly in $0\le \bar\e \le \e \leq 1$, $\kappa\in [0,1]$, $\lambda\in(0,1]$ and $z\in\T^n$, and where $\phi$ is as in \theo{thm:moment-estimates}.
\end{lemma}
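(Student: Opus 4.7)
The random variable $Y := \scal{X^\e - X^{\bar\e}, \phi^\lambda_z}$ is a centered Gaussian, being a linear functional of the Gaussian field $X$. By the equivalence of Gaussian $L^p$-moments (Nelson's hypercontractivity) it suffices to establish the claim with $L^2$ in place of $L^p$. Using the evenness of $\chi$, I rewrite $Y = \scal{X, (\chi^\e - \chi^{\bar\e})*\phi^\lambda_z}$.

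The core observation is the fundamental theorem of calculus in the scaling parameter,
\[
\chi^\e - \chi^{\bar\e} = \int_{\bar\e}^\e \partial_\eta \chi^\eta\,d\eta, \qquad \partial_\eta\chi^\eta(y) = -\eta^{-n-1}\bigl[n\chi(y/\eta) + (y/\eta)\cdot\nabla\chi(y/\eta)\bigr],
\]
where $\partial_\eta\chi^\eta$ is mean-zero (since $\int\chi^\eta = 1$ for every $\eta>0$), supported in $\{|y|\lesssim\eta\}$, and bounded by $\eta^{-n-1}$. Setting $G_\eta := (\partial_\eta\chi^\eta)*\phi^\lambda_z$, Minkowski's integral inequality yields
\[
\|Y\|_{L^2} \leq \int_{\bar\e}^\e \|\scal{X, G_\eta}\|_{L^2}\,d\eta.
\]

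Next I would establish the single-scale bound $|G_\eta(x)| \lesssim (\lambda\vee\eta)^{-n-1}$ with $\mathrm{supp}\,G_\eta \subset \{|x-z|\lesssim \lambda\vee\eta\}$. For $\eta > \lambda$ this is immediate from $|G_\eta(x)|\leq \|\partial_\eta\chi^\eta\|_\infty\|\phi^\lambda_z\|_1 \lesssim \eta^{-n-1}$. For $\eta\leq\lambda$ the mean-zero property is essential: writing $G_\eta(x) = \int \partial_\eta\chi^\eta(y)[\phi^\lambda_z(x-y) - \phi^\lambda_z(x)]\,dy$ and using $\|D\phi^\lambda_z\|_\infty\lesssim \lambda^{-n-1}$ together with $\int_{|y|\lesssim\eta}|y|\,dy \lesssim \eta^{n+1}$ yields $|G_\eta(x)| \lesssim \lambda^{-n-1}$. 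Combining this with the covariance bound $|C(x)|\lesssim|x|^{2-d}$ from Assumption~\ref{ass:GF} and the elementary double-integral estimate $\int\!\int_{|u-z|,|v-z|\lesssim R}|u-v|^{2-d}\,du\,dv \lesssim R^{2n+2-d}$ produces
\[
\|\scal{X, G_\eta}\|_{L^2}^2 \leq \int\!\!\int |u-v|^{2-d}|G_\eta(u)||G_\eta(v)|\,du\,dv \lesssim (\lambda\vee\eta)^{-d}.
\]

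Integrating in $\eta\in[\bar\e,\e]$, a short case analysis split at $\eta = \lambda$ (using $d>2$ to control the tail $\int_\lambda^\e \eta^{-d/2}d\eta \lesssim \lambda^{1-d/2}$) yields the two uniform endpoint bounds
\[
\|Y\|_{L^2} \lesssim \lambda^{1-d/2} \quad\text{and}\quad \|Y\|_{L^2} \lesssim (\e-\bar\e)\lambda^{-d/2},
\]
valid for all $0\le\bar\e\leq\e\leq 1$. Finally, the geometric-mean interpolation $\min(a,b) \leq a^{1-\kappa/2}b^{\kappa/2}$ with $\kappa\in[0,1]$ gives exactly $\|Y\|_{L^2} \lesssim |\e-\bar\e|^{\kappa/2}\lambda^{1-d/2-\kappa/2}$, which Gaussian moment equivalence upgrades to the desired $L^p$ bound. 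The only mildly subtle step is the single-scale bound on $|G_\eta|$ in the regime $\eta\leq\lambda$, where the cancellation $\int\partial_\eta\chi^\eta = 0$ cannot be dropped; the $\eta$-integration and interpolation are otherwise routine.
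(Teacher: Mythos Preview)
Your proof is correct and takes a genuinely different route from the paper's. The paper first telescopes to reduce to the regime $\e\leq 2\bar\e$, and then establishes the \emph{pointwise} covariance bound
\[
|\E[X^\e(x)\otimes(X^\e-X^{\bar\e})(\bar x)]|\lesssim|\e-\bar\e|^\kappa|x-\bar x|^{2-d-\kappa}
\]
by a rescaling identity for $\chi^\e-\chi^{\bar\e}$ together with the derivative hypothesis $|C^{(1)}(x)|\lesssim|x|^{1-d}$ from Assumption~\ref{ass:GF}; integrating this against $\phi^\lambda\otimes\phi^\lambda$ then gives the lemma. You instead differentiate in the mollification parameter via the fundamental theorem of calculus, apply Minkowski, and bound each single-scale piece $\|\scal{X,G_\eta}\|_{L^2}$ using only the $|k|=0$ bound $|C(x)|\lesssim|x|^{2-d}$; the two endpoint estimates then drop out of the $\eta$-integral and the final bound from geometric-mean interpolation. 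Your argument is arguably more elementary---no telescoping, no use of $|C^{(1)}|$, and it handles all $0\le\bar\e\le\e\le1$ uniformly in one pass. The paper's route, however, has the advantage that its intermediate covariance bound~\eqref{eq:cov_Psi-Psi^e} is reused verbatim later in the proof of Theorem~\ref{thm:moment-estimates} to control the kernels $K_e$ attached to contracting edges; your method does not directly produce that pointwise estimate, so in the context of the full paper one would still need a separate argument there.
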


\begin{proof}
	It suffices to consider the case that $\eps \leq 2\bar\e$.
	Indeed, it follows from this case that, for $\eps > 2\bar\e$, by telescoping,
	$
	\lambda^{-1+d/2+\kappa/2}\| \scal{X^\e-X^{\bar\e}, \phi^\lambda_z} \|_{L^p}  \lesssim \sum_{i=1}^\infty (\e 2^{-i})^{\kappa/2}
	\lesssim |\bar\e-\e|^{\kappa/2}
	$.

	By equivalence of Gaussian moments and 
	integrating against the test function, the desired bound follows once we show
	\begin{equ}[eq:cov_Psi-Psi^e]
		|	\E[X^\e(x)\otimes (X^\e-X^{\bar\e})(\bar x)] |  \lesssim |\e-\bar\e|^\kappa|x-\bar x|^{2-d-\kappa}
	\end{equ}
	uniformly in $\e\in[0,1]$, $\bar\e\in [\frac12\e,2\e]$, $\kappa\in [0,1]$, and $x,\bar x\in\T^n$. 
	
	By translation invariance, we suppose without loss of generality that $\bar x=0$.
	The left-hand side of \eqref{eq:cov_Psi-Psi^e}
	is then equal to
	\begin{equ}[eq:LHS_lemma]
		\scal{C *\chi^\e(\cdot-x), \chi^\e-\chi^{\bar\e}} = \int C *\chi^\e(z-x) \{\chi^\e(z) - \chi^{\bar\e}(z)\} \mrd z\;.
	\end{equ}
	By \assu{ass:GF} with $|k|=0$, if $|x|> 4\e$, then, for all $|z|<2\e$,
	$
		|C *\chi^\e (z-x)| \lesssim |x|^{2-d}
	$
	and therefore
	\begin{equ}
		|\scal{C *\chi^\e(\cdot-x), \chi^\e-\chi^{\bar\e}}| \lesssim |x|^{2-d}\;.
	\end{equ}
	On the other hand, if $|x|\leq 4\e$, then, for all $|z|<2\e$,
	$
		|C *\chi^\e (z-x)| \lesssim \e^{2-d}
	$
	and therefore
	\begin{equ}
		|\scal{C *\chi^\e(\cdot-x), \chi^\e-\chi^{\bar\e}}| \lesssim \e^{2-d}\;.
	\end{equ}
	This proves~\eqref{eq:cov_Psi-Psi^e} for $\kappa=0$.
	(We did not use yet that $\bar\e\in [\frac12\e,2\e]$.)
	
	By interpolation, it remains only to consider the case $\kappa=1$.
	We can suppose without loss of generality that $\chi$ has support in $B(\frac14)$ where $B(r) = \{z\,:\,|z|\leq r\}$.
	For any differentiable function $f\colon \T^n\to\R$, since $\bar\e\in [\frac12\e,2\e]$,
	\begin{equs}[eq:chi_diff]
		\Big|\int f(z) \{\chi^\e(z) - \chi^{\bar\e}(z)\} \mrd z \Big|
		&=
		\Big|\int f(z) \{\e^{-n}\chi(z \e^{-1}) - \bar\e^{-n} \chi(z \bar\e^{-1})\} \mrd z\Big|
		\\
		&= \Big|\int \{f(z) - f(z\bar\e \e^{-1})\}\e^{-n}\chi(z \e^{-1}) \mrd z
		\Big|
		\\
		&\leq \int |\nabla f|_{\infty;B(\e)} |z||1-\bar\e \e^{-1}| \e^{-n}|\chi(z \e^{-1})| \mrd z
		\\
		&\lesssim |\e - \bar \e | |\nabla f|_{\infty;B(\e)}\;,
	\end{equs}
	where we denote $|f|_{\infty;B} = \sup_{x\in B}|f(x)|$ for a subset $B\subset\T^n$.
	
	With the aim to bound~\eqref{eq:LHS_lemma}, we remark that, by \assu{ass:GF}
	with $|k|=1$, if $|x|>4\e$, then, for all $|z|<2\e$,
	$
		|\nabla C *\chi^\e (z-x)| \lesssim |x|^{1-d}
	$
	and therefore, by \eqref{eq:chi_diff},
	\begin{equ}
		|\scal{C *\chi^\e(\cdot-x), \chi^\e-\chi^{\bar\e}}| \lesssim |\e-\bar\e||x|^{1-d}\;.
	\end{equ}
	On the other hand, if $|x|\leq 4\e$, then, for all $|z|<2\e$,
	$
		|\nabla C *\chi^\e (z-x)| \lesssim \e^{1-d}
	$
	and therefore, again by \eqref{eq:chi_diff},
	\begin{equ}
		|\scal{C *\chi^\e(\cdot-x), \chi^\e-\chi^{\bar\e}}| \lesssim \e^{1-d}|\e-\bar\e|\;.
	\end{equ}
	This proves~\eqref{eq:cov_Psi-Psi^e} for $\kappa=1$.
\end{proof}

\subsection{Estimates for iterated convolution integrals}
\label{subsec:general-bounds}

In this subsection, we derive bounds on general iterated convolution integrals.
The main result of this subsection is Theorem \ref{thm:general-bounds}
that controls the covariance of terms $X^{\tau,\e}_t$ appearing in \theo{thm:moment-estimates}.

\begin{definition}
	A \emph{forest} $\zF = \{\tau_1,\ldots,\tau_m\}$ is a finite set of rooted trees (see Definition~\ref{def:trees}),
	which we identify with a (possibly disconnected) graph.

	We write $V_{\zF}$ and $E_{\zF}$ for the sets of vertices and edges of $\zF$ respectively. We let $\roots_{\zF} = \{\rho_{\tau_1},\ldots,\rho_{\tau_m}\}$ denote the set of roots of $\zF$.

	We sometimes use the notation $\zF=(V,E,\roots)$, by which we indicate that $V,E,\roots$ are the respective set of vertices, edges, and roots of $\zF$.
 
	If $m=1$, then we simply identify $\zF=\tau_1$ and $\roots_{\zF} = \rho_{\tau_1}$. 
\end{definition}
Throughout this subsection, unless otherwise stated, $\zF = \{\tau_1,\ldots,\tau_m\}$ denotes a forest. We write $V,E,\roots$ for $V_{\zF},E_{\zF},\roots_{\zF}$.

\begin{notation}\label{not:LI}
	We write $L$ for the set of leaves of $\zF$ (which is the union of the sets of leaves of $\tau_1,\ldots,\tau_m$).
	We also denote $I\eqdef V\setminus L$, elements of which will be referred to as inner vertices/nodes.
	We then define $V^* \eqdef V\setminus \roots$, $L^*\eqdef L\setminus \roots$ and $I^*\eqdef I\setminus \roots$.
	We write $V_{\zF}^*$, $L_{\zF}$,
	etc., when we wish to specify the forest $\zF$.
\end{notation}

\begin{definition}
	For $v,u\in V$, we write $v\preceq u$ if $u$ and $v$ are both vertices of the same tree $\tau_i$ in $\zF$ and $v$ is on the unique path from $u$ to $\rho_{\tau_i}$.
	We write $v\prec u$ if $v\preceq u$ and $v\neq u$.
	We say that $u$ is a child of $v$ and that $v$ is the parent of $u$ if $\{u,v\}\in E$ and $v\prec u$, in which case we write $\hat u = v$ and write $(\hat u,u)$ for the edge $\{u,v\}$.

	A \emph{branch} of $\zF$ is a subtree $\tilde{\tau}$ with root $\tilde{\rho} \in V$, vertex set $\tilde{V}$ formed of all $v \in V$ with $\tilde{\rho}\preceq v$, and edge set $\tilde{E}$ formed of all edges $\{u,v\}\in E$ such that $u,v\in \tilde{V}$.

	We call a branch $\tilde{\tau}$ of $\zF$ a \emph{strict branch} if $\tilde{\rho}\notin \roots$, where $\tilde{\rho}$ is the root of $\tilde\tau$.
\end{definition}

We remark that every vertex $v\in V^*$ has precisely one parent (but every inner node may have many children). 
We also remark that a branch of $\zF$ is uniquely determined by its root, and that any vertex $v$ of $\zF$ is the root of a unique branch. 
Consequently, the set of branches of $\zF$ is in bijection with $V$. 
We write $\zF_v$ for the unique branch of $\zF$ whose root is $v\in  V$.

\begin{definition} \label{def:contraction}  
	Let $\fC$ be a subset of $\left\{\{u,v\}:\; u,v\in L\,,\;u\neq v\right\}$
	such that, for every leaf $w\in L$, there is at most one element of $\fC$ that contains $w$. 
	We call $\fC$ a \emph{contraction of $\zF$} (or simply a contraction when clear from the context). Elements of a contraction are called \emph{contracting edges}.
	If $\{u,v\}\in \fC$, we say that $u$ and $v$ are \emph{paired} in $\fC$. 
\end{definition}
\begin{definition}\label{def:CT} 
	A \emph{contracted forest} is a tuple $(\zF, \fC)$ where $\zF$ is a forest and $\fC$ is a contraction of $\zF$. 
	\begin{itemize}
	\item
	If $\zF$ is a singleton (i.e. a tree), we call $(\zF, \fC)$ a contracted tree.
	\item 
	Given a contracted forest $(\zF, \fC)$ and a branch $\sigma$ of $\zF$, the \emph{induced contraction} on $\sigma$ is the set $\fC_\sigma$ of all pairs $\{u,v\}\in \fC$ such that $u,v$ are both leaves of $\sigma$ (see Figure~\ref{fig:graphs}). 
	\end{itemize}
\end{definition} 

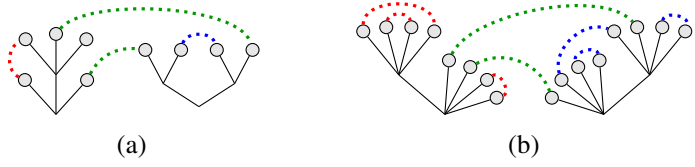
\begin{figure}[H]
\begin{center}
	\begin{subfigure}[t]{0.4\textwidth}
	\centering
		\begin{tikzpicture}[scale=0.47]		
%
			\node at (-.85, 1.2) [var] (a) {}; 
			\node at (0, 1.35)  [var] (b)  {}; 
			\node at (.85, 1.2)  [var] (c)  {};  
			\node at (-.87,.05) [var] (d) {}; 
			\node at (.87,.05) [var] (e) {}; 
			\draw (0,.2) -- (a);
			\draw (0,.2) -- (b);
			\draw (0,.2) -- (c);
			\draw (0,.2) -- (0,-.92);
			\draw (d) -- (0,-.92);
			\draw (e) -- (0,-.92);
					
			\node at (2.5,.9) [var] (a') {}; 
			\node at (3.5,.9)  [var] (b')  {};  
			\draw (3,-.05) -- (b');
			\draw (3,-.05) -- (a');
			\draw (3,-.05) -- (4.,-.7);

			\node at (4.5,.9) [var] (c') {}; 
			\node at (5.5,.9)  [var] (d')  {};
			\draw (5,-.05) -- (c');
			\draw (5,-.05) -- (d');
			\draw (5,-.05) -- (4.,-.7);

%
%
			\draw[bend left =55, darkgreen, very thick, dotted]  (b) to[out=55, in=110, distance=1.15cm] (d');
			\draw[bend left =35, darkgreen, very thick, dotted]  (e) to (a');
			\draw[bend left =65, red, very thick, dotted]  (d) to (a);
			\draw[bend left =75, blue, very thick, dotted]  (b') to (c');

		\end{tikzpicture}
		\caption{}
	\end{subfigure}
	\begin{subfigure}[t]{0.4\textwidth}
	\centering
		\begin{tikzpicture}[scale=0.55]		
			\node at (-.85, 1) [var] (a) {}; 
			\node at (-.3, 1.1)  [var] (b)  {}; 
			\node at (.3, 1.1)  [var] (c)  {};  
			\node at (.85, 1) [var] (d) {}; 
			\node at (1.2,.3) [var] (e) {}; 
			\node at (1.71, .14) [var] (f) {}; 
			\node at (2.11,-.17) [var] (g) {};
			\node at (2.34,-.6) [var] (h) {}; 
			\draw (0,-.05) -- (a);
			\draw (0,-.05) -- (b);
			\draw (0,-.05) -- (c);
			\draw (0,-.05) -- (d);
			\draw (0,-.05) -- (1.1,-1);
			\draw (e) -- (1.1,-1);
			\draw (f) -- (1.1,-1);
			\draw (g) -- (1.1,-1);
			\draw (h) -- (1.1,-1);
			\node at (6+.85, 1) [var] (a') {}; 
			\node at (6+.3, 1.1)  [var] (b')  {}; 
			\node at (6-.3, 1.1)  [var] (c')  {};  
			\node at (6-.85, 1) [var] (d') {}; 
			\node at (6-1.2,.3) [var] (e') {}; 
			\node at (6-1.71, .14) [var] (f') {}; 
			\node at (6-2.11,-.17) [var] (g') {};
			\node at (6-2.34,-.6) [var] (h') {}; 
			\draw (6,-.05) -- (a');
			\draw (6,-.05) -- (b');
			\draw (6,-.05) -- (c');
			\draw (6,-.05) -- (d');
			\draw (6,-.05) -- (6-1.1,-1);
			\draw (e') -- (6-1.1,-1);
			\draw (f') -- (6-1.1,-1);
			\draw (g') -- (6-1.1,-1);
			\draw (h') -- (6-1.1,-1);

			\draw[bend left =90, red, very thick, dotted]  (a) to (d);
			\draw[bend left =80, red, very thick, dotted]  (b) to (c);
			\draw[bend left =65, blue, very thick, dotted]  (g') to (d');
			\draw[bend left =-95, blue, very thick, dotted]  (e') to (f');
			\draw[bend left =45, darkgreen, very thick, dotted]  (e) to[out=55, in=120, distance=1.1cm] (c') ;
			\draw[bend left =95, blue, very thick, dotted]  (b') to (a');
			\draw[bend left =99, red, very thick, dotted]  (g) to (h);
			\draw[bend left =50, darkgreen, very thick, dotted]  (f) to (h');

		\end{tikzpicture}
		\caption{}
	\end{subfigure}
	\caption{\small Examples of \emph{contracted forests}.
	In~(a) and~(b), the edges drawn in red respectively blue are elements of the \emph{induced contraction} on the tree on the left respectively right.
	}
	\label{fig:graphs}
\end{center}
\end{figure}

\begin{definition}\label{def:DCF-I-K}

\begin{enumerate}[label=(\alph*)]
	\item\label{pt:DCF} We call $(\bzF,\bfC)$ \emph{a decorated contracted forest} if it consists of the following data:
	\begin{itemize}
	\item a contracted forest $(\zF, \fC)$, where $\zF=(V_{\zF}, E_{\zF},\roots_{\zF})$ is a forest consisting of two trees and with root set $\roots_{\zF} = \{\rho,\bar \rho\}$;

	\item assignments of real numbers to vertices $\{\beta_v\}_{v\in  V_{\zF}}$ such that $\beta_v\geq -1$ if $v\in L_{\zF}^{*}$;

	\item assignments to edges in $E_{\zF}$ of real numbers $\{\gamma_e\}_{e\in E_{\zF}}$ 
	as well as multi-indexes $\{k_e\}_{e\in E_{\zF}}$ with $k_e\in\N_0^n$;
	\item an assignment to contracting edges of real numbers $(a_e)_{e\in \fC}$.
	\end{itemize}
	By writing $\bzF$, we are indicating that the forest $\zF$ is decorated with the above assignments for its edges and vertices. Similarly, by $\bfC$, we mean that $\fC$ is considered together with the assignments to its elements. 
	
	A branch $\sigma$ of $\zF$ with the edge and vertex decorations inherited from $\bzF$ is called a \emph{decorated tree}.

	We call $v\in V_\zF$ a \emph{Dirac vertex} if $v\in L_{\zF}^{*}$ and $\beta_v=-1$, 
	and otherwise we call $v$ a \emph{non-Dirac vertex} (see \eqref{eq:gv_def} for motivation).

	\item For $(\bzF,\bfC)$ as above and $v\in V_{\zF}$, define the function/distribution $g_v$ on $\R$ by
	\begin{equ}[eq:gv_def]
	g_v(t)
	=
	\begin{cases}
	\delta_0(t) &\quad \text{ if $v$ is a Dirac vertex}\;,
	\\
	t^{\beta_v}\bone_{t>0} &\quad \text{ otherwise} \;.
	\end{cases}
	\end{equ}
	For $k\in\N_0^n$, let $G^{(k)}_{\cdot}(\cdot)$ denote the $k$-th spatial derivative of the heat kernel
	\begin{equ}
		G_t(x) = \sum_{m \in\Z^n} (4\pi t)^{-n/2}\mre^{-|x+m|^2/(4t)}\;,\qquad \forall t>0\;,\;\, x\in\T^n\;.
	\end{equ}
	Consider time-space points
	\begin{equ}
		(t,x), (\bar t, \bar x)\in D \eqdef (0,1)\times\T^n
	\end{equ}
	and denote $(t_\rho,x_{\rho})=(t,x)$ and  $(t_{\bar\rho},x_{\bar \rho})=(\bar t,\bar x)$. 
	We define
		\begin{align}
		\label{eq:general-conv}
		\!\!\!\!\!\I^{\bzF,\bfC}(t,\bar t,x-\bar x)&= \int_{D^{V^{*}_{\zF}}}
		\prod_{\{u,v\}\in \fC}\!\! |x_u-x_v|^{-a_{u,v}}
		\prod_{w\in V_{\zF}} g_w (t_w)
		\\&\quad\qquad
		\prod_{w \in  V^{*}_{\zF}}\!\left|(t_{\hat w}-t_{w})^{\gamma_{\hat 	w,w}}G^{(k_{\hat w,w})}_{t_{\hat w}-t_{w}}(x_{\hat w}-x_{w})\right| \, \mrd x_w\mrd t_w\;.\nonumber
	\end{align}
	\item\label{pt:kernel_diff} Suppose in addition that $t=\bar t$ and consider $s\in(0,1)$.
    Furthermore, suppose that we are given two sets $U,W \subset C_\rho\cup C_{\bar\rho}$,	where
	\begin{equ}
	C_u \eqdef \{v \in  V_{\zF} \,:\, v \text{ is a child of } u \}\;,
	\end{equ} such that $|U\cap C_\rho|\leq 1$, $|U\cap C_{\bar\rho}|\leq 1$, and $U\subset W$.
	We then define 
	\begin{equs}{}
	& \K^{\bzF,\bfC}_{W;U}(t,x-\bar x;s) = \\
	 	\!&\int_{D^{V_{\zF}^{*}}} 	\Big\{\!\!\prod_{w \in  V^{*}_{\zF}} \!\! \mrd 	x_w\mrd t_w\Big\}\;
	    \Big|\Big\{\!\!\prod_{w\in V_{\zF}} g_w(t_w)\Big\}
		\prod_{\{u,v\}\in \fC}\!\! |x_u-x_v|^{-a_{u,v}}
		\label{eq:general-conv_ts}
		\\&\quad
		\prod_{u\in V^{*}_{\zF}\setminus W}\!\!\!(t_{\hat u}-t_u)^{\gamma_{\hat 	u,u}}G^{(k_{\hat u,u})}_{t_{\hat u}-t_{u}}(x_{\hat u}-x_{u})
		\!\!\prod_{v\in W\setminus U} \!\! (s-t_v)^{\gamma_{\hat v,v}} G^{(k_{\hat v,v})}_{s-t_{v}}(x_{\hat v}-x_{v})
		\\&\quad\; \prod_{w \in U}\! \Big\{ (t-t_w)^{\gamma_{\hat w,w}} 	G^{(k_{\hat w,w})}_{t-t_{w}}(x_{\hat w}-x_{w})-(s-t_w)^{\gamma_{\hat w,w}} G^{(k_{\hat w,w})}_{s-t_{w}}(x_{\hat w}-x_{w})\Big\}\Big|\;.
	\end{equs}
Here and below, for any edge~$e=(\hat w,w)\in E_{\zF}$, we write both $\gamma_e,\gamma_{\hat w,w}$ to refer to the corresponding decoration assigned to the edge $e$, and likewise for $k_e,k_{\hat w,w}$ as well as $a_f,a_{u,v}$ for $f=\{u,v\}\in\fC$.
\end{enumerate}
\end{definition}

\begin{remark}\label{rem:intuition}
The quantities $\I^{\bzF,\bfC}(t,\bar t,x-\bar x)$ and $\K^{\bzF,\bfC}_{W;U}(t,x-\bar x;s)$ appear in the proof of Theorem \ref{thm:moment-estimates} as follows. 
Suppose, for simplicity, that $X$ is $\R$-valued and
that we aim to show $\|\scal{t^\delta X^{\tau,\eps}_t,\phi^\lambda_z}\|^2_{L^2} \lesssim 1$ uniformly in $\eps \in (0,1)$.
We write
\begin{equ}
\|\scal{t^\delta X^{\tau,\eps}_t,\phi^\lambda_z}\|^2_{L^2} = \int_{\T^n}\!\!\int_{\T^n} t^{2\delta}\E[X^{\tau,\eps}_t(y)X^{\tau,\eps}_t(\bar y)]\phi^\lambda_z(y)\phi^\lambda_z(\bar y)\mrd y\mrd \bar y
\end{equ}
and proceed to bound the 2-point function $\E[X^{\tau,\eps}_t(y)X^{\tau,\eps}_t(\bar y)]$.
To this end, we use the definition of $X^{\tau,\eps}_t$ to write
$X^{\tau,\eps}_t(y)X^{\tau,\eps}_t(\bar y)$
as a linear combination of integrals involving heat kernels (possibly with derivatives) and $2\noise{\tau}$ Gaussian fields $X^\eps$.
By the Isserlis--Wick theorem, the expectation of every such integral is given by a sum over pairings of leaves $\fC$ of integrals of the form \eqref{eq:general-conv} with $\zF_
\rho,\zF_{\bar\rho}$ isomorphic to $\tau$, the factor $|x_u-x_v|^{-a_{u,v}}$ replaced by the
covariance $\E X^\e(u)X^\e(v)$,
with $\gamma=0$ at each edge and $\beta=0$ at every inner node, and with all leaves being Dirac vertices.
Therefore, to bound $\|\scal{t^\delta X^{\tau,\eps}_t,\phi^\lambda_z}\|^2_{L^2}$, it suffices to bound every such integral,
which is precisely the purpose of \theo{thm:general-bounds}.
The reason we consider the more general quantities \eqref{eq:general-conv}-\eqref{eq:general-conv_ts} (involving non-zero $\gamma,\beta$, etc.) is that they appear in our induction when proving the above special case.

We in fact use $\I^{\bzF,\bfC}(t,\bar t,x-\bar x)$ to prove \eqref{eq:Z_eps_diff} by considering one of the Gaussian fields in the integral associated to each tree as $X^\eps-X^{\bar\eps}$,
see \eqref{eq:covar-Y^tau-Wick}.
Likewise the more complicated quantity $\K^{\bzF,\bfC}_{W;U}(t,x-\bar x;s)$, which involves two time points $s,t$, naturally arises in the proof of \eqref{eq:Z_time_diff}, see \eqref{eq:covar-Y^tau-Wick'}.
\end{remark}

\begin{remark}
	We remark that $\I^{\bzF,\bfC}(t,\bar t,x-\bar x)$ and $\K^{\bzF,\bfC}_{W;U}(t,x-\bar x;s)$, as suggested by our notation, depend on $x$ and $\bar x$ only through $x-\bar x$. 
\end{remark}

\begin{remark}
	We suppose that $\zF$ in Definition \ref{def:DCF-I-K} consists of precisely two trees only for simplicity and because this is the situation we encounter later.
	Theorem \ref{thm:general-bounds} below extends, with suitable modifications, to situations with more trees.
	Moreover, the only properties of the heat kernel $G$ in \eqref{eq:general-conv}-\eqref{eq:general-conv_ts} that we use are the estimates in Lemma \ref{lem:heat-flow-estimates},
	so $G$ could be replaced by any kernel satisfying analogous estimates.
\end{remark}

\begin{definition}\label{def:Lambda}
	Let $(\bzF,\bfC)$ be as in \defi{def:DCF-I-K}\ref{pt:DCF}. 
	Define $b\colon L_{\zF}\to \R$ by $b_\ell=a_e$
	if there is a contracting edge $a_e$ incident to the leaf $\ell$ (note that such an edge is unique), and by $b_{\ell}=0$ otherwise. 
	For a branch $\sigma$ of $\zF$, we define 
	\begin{equ}
		\Lambda(\sigma) \eqdef |E_\sigma| + \sum_{e\in E_\sigma} \Big\{\gamma_e-\frac12 |k_e|\Big\} +\sum_{u\in  V_\sigma} \beta_u
		-\frac14\sum_{\ell\in L_\sigma} b_\ell \;.
	\end{equ}
	We then set $\Lambda(\zF)\eqdef\Lambda( \zF_{\rho})+\Lambda({ \zF_{\bar\rho}})$.
	(Recall $\zF_v$ is the unique branch of $\zF$ with root $v\in  V_\zF$.)
\end{definition}
Below, for a contraction $\fC$ and a subset of vertices $O\subset  V_{\zF}$, we denote by $\fC(O)$ the set of all the edges $e\in \fC$ such that $e$ is incident to an element of $O$.
We also use the notation $V^c_{\zF_{v}}\eqdef V_{\zF}\setminus  V_{\zF_{v}}$.
\begin{remark}\label{rem:Lambda}
	An equivalent definition of $\Lambda(\sigma)$ is to set $\Lambda(\zF_{\ell})=\beta_\ell-a_e/4$ for all $\ell\in L_{\zF}$ where $e\in \fC(\{\ell\})$, and then define by recursion for $u\in I_{\zF}\,$,
	\begin{equ}[eq:Lambda_induct]
		\Lambda( \zF_{u}) \eqdef \beta_u + \sum_{\ell\in C_u} \{\Lambda( \zF_{\ell}) + 1
		+ \gamma_{u,\ell} - |k_{u,\ell}|/2\}\;.
	\end{equ}

\end{remark}
\begin{example}\label{ex:Lambda} 
Suppose $(\bzF,\bfC)$ is a decorated contracted forest as defined in \defi{def:DCF-I-K}\ref{pt:DCF} with 
\begin{equ}
	V_{\zF}=\{\rho,\bar\rho,v,\bar v\}\,,\quad E_{\zF}=\{(\rho,v),(\bar\rho,\bar v)\}\,,\quad  R_{\zF}=\{\rho,\bar\rho\}\,,
\end{equ} 
(i.e. $\zF=\{\zF_\rho,\zF_{\bar\rho}\}$, where the two trees are isomorphic, each with a single edge, with respective roots $\rho,\bar\rho$ and leaves $v,\bar v$)
and with $\fC=\{\{v,\bar v\}\}$.
Then, by \defi{def:Lambda} (or using the inductive formula \eqref{eq:Lambda_induct}), we have
\begin{equs}[eq:ex-Lambda]
	\Lambda({\zF_{\rho}}) = 1+\beta_v+\beta_\rho+\gamma_{\rho,v}-a_{v,\bar v}/4-|k_{\rho,v}|/2\;, 
	\\ \Lambda({\zF_{\bar\rho}}) = 1+\beta_{\bar v}+\beta_{\bar{\rho}}+\gamma_{\bar\rho,\bar v}-a_{v,\bar v}/4-|k_{\bar\rho,\bar v}|/2\;.
\end{equs}
\end{example}
\begin{remark} 
	In light of Remark~\ref{rem:Lambda}, one should think of $2\Lambda(\sigma)$ as a generalised version of the function $|\sigma|$ defined by~\eqref{eq:homogeneity} and the line above it. 
	Intuitively, $2\Lambda(\tau)$
	represents the scaling dimension of the random field $X^\tau$ from \rem{rem:intuition}, which is reflected in the bounds \eqref{eq:general-bound} and \eqref{eq:Z_eps_diff}-\eqref{eq:Z_time_diff}.
\end{remark}
Below, we use the convention that $\max\emptyset = \min\emptyset = 0$.
\begin{theorem}\label{thm:general-bounds}
	Let $(\bzF,\bfC)$ be a decorated contracted forest as defined in \defi{def:DCF-I-K}\ref{pt:DCF} such that $\fC(\roots_{\zF})=\emptyset$. 
	Suppose that
	
	\begin{enumerate}[label=(\alph*)]
	\item \label{pt:edges} for all non-Dirac vertices $v\in V^{*}_{\zF}$,
	\begin{equ}[eq:edges]
	z_v\eqdef -\max \{a_e/4 \,:\, e\in \fC(V_{\zF_{v}})\cap \fC(V^c_{\zF_{v}}) \} - |k_{\hat v,v}|/2 + \gamma_{\hat v,v} >-1\;,
	\end{equ}
	
	\item \label{pt:branches} for all $v\in I^{*}_{\zF}$,
	\begin{equ}[eq:branches]
	 \Lambda( \zF_{v})+ \min \{a_e/4 \,:\, e\in \fC(V_{\zF_{v}})\cap \fC(V^c_{\zF_{v}}) \} > -1\;,
	 \end{equ}
	
	\item \label{pt:contractions} $a_e \in (0,n)$ for all $e\in \fC$.
	\end{enumerate}
	
	Then, 
	for any
	$0\le \theta \le \max \{a_e \,:\; e\in \fC(L_{\zF_{\rho}})\cap \fC(L_{\zF_{\bar\rho}})\}\,$,
	\begin{equ}[eq:general-bound]
		\I^{\bzF,\bfC}(t,\bar t,x-\bar x)
		\lesssim
		t^{\Lambda({\zF_{\rho}})+ \frac{\theta}{4}}\bar t^{\Lambda({\zF_{\bar\rho}})+ \frac{\theta}{4}}|x-\bar x|^{-\theta} \;.
	\end{equ}

	Suppose further that $U$ and $W$ are as in Definition~\ref{def:DCF-I-K}\ref{pt:kernel_diff}  and $0<s\le t\le 2s$.
	Consider $\kappa,\bar\kappa\in[0,1]$ such that
	\begin{equs}[eq:tipar-cond]{}
		&\kappa<1 + z_\ell\quad
		\textnormal{if $U\cap C_{\rho}$ contains one element $\ell$ which is non-Dirac},\\
		&\bar\kappa<1 + z_{\bar \ell}\quad
		\textnormal{if $U\cap C_{\bar \rho}$ contains one element $\bar\ell$ which is non-Dirac},
	\end{equs}
	where $z_\ell,z_{\bar\ell}$ are defined in \eqref{eq:edges}.
	Denote $\tipar\eqdef\kappa\bone_{|U\cap C_\rho|=1}+\bar\kappa\bone_{|U\cap C_{\bar\rho}|=1}$.
	Then,
	\begin{equ}[eq:general-bound_diff]
		\K^{\bzF,\bfC}_{W;U}(t,x-\bar x;s)
		\lesssim
		|t-s|^{\tipar} 	t^{\Lambda(\zF)+\frac{\theta}{2}-\tipar} |x-\bar x|^{-\theta} \;.
	\end{equ}
\end{theorem}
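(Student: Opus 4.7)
The plan is to prove the deterministic bound~\eqref{eq:general-bound} for $\I^{\bzF,\bfC}$ by strong induction on $|V^*_\zF|$, then deduce~\eqref{eq:general-bound_diff} from it via H\"older-in-time bounds on the heat kernel. The combinatorial heart of the induction is a vertex-elimination algorithm: at each step I select a non-root inner vertex $v\in I^*_\zF$ all of whose children are leaves, integrate out those children, and replace $v$ by a new leaf of a smaller decorated contracted forest with updated decorations.

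The base cases are handled directly. The trivial forest $|V^*_\zF|=0$ yields $\I=t^{\beta_\rho}\bar t^{\beta_{\bar\rho}}$, matching $\Lambda$ by definition. The model case of Example~\ref{ex:Lambda} (two one-edge trees paired by one contracting edge) is computed by integrating the two spatial variables using the standard estimate $\int|G^{(k)}_\tau(x-y)|\,|y-z|^{-a}\,\mrd y\lesssim\tau^{-|k|/2}(\tau^{1/2}+|x-z|)^{-a}$ (valid for $a\in(0,n)$ by~(c)), splitting $(\tau^{1/2}+\bar\tau^{1/2}+|x-\bar x|)^{-a}\lesssim\tau^{-(a-\theta)/4}\bar\tau^{-(a-\theta)/4}|x-\bar x|^{-\theta}$ for $\theta\in[0,a]$, and finishing with the Beta-function identity in time, whose convergence is exactly condition~(a) at the two leaves. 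A direct check matches the time exponents with $\Lambda(\zF_\rho)+\theta/4$ and $\Lambda(\zF_{\bar\rho})+\theta/4$.

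In the inductive step, each child $c$ of $v$ is processed according to its contracting status. If $c$ carries no contracting edge, integrating $(t_c,x_c)$ directly absorbs into the updated decorations $\beta_v,\gamma_{\hat v,v}$: the $t_c$-integral collapses in the Dirac case, and the Beta-function together with~(a) controls it in the non-Dirac case. If $c$ is contracted with a sibling also at $v$, the pair is integrated jointly using $\int|G^{(k_1)}_{\tau_1}(x_v-y)G^{(k_2)}_{\tau_2}(x_v-y')|\,|y-y'|^{-a}\,\mrd y\,\mrd y'\lesssim\tau_1^{-|k_1|/2}\tau_2^{-|k_2|/2}(\tau_1+\tau_2)^{-a/2}$. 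Most delicately, when $c$'s contracting partner $u$ lies elsewhere in the forest, I \emph{promote} the contraction to $v$: after integrating $x_c$, the factor $(\tau^{1/2}+|x_v-x_u|)^{-a}$ is split as $\tau^{-\eta/2}|x_v-x_u|^{-(a-\eta)}$ for a carefully chosen $\eta\in[0,a]$, creating a new contracting edge $\{v,u\}$ of exponent $a-\eta$ and a strengthened kernel decoration at $(\hat v,v)$. One then checks that $\Lambda(\zF_\rho)$ and $\Lambda(\zF_{\bar\rho})$ are invariant under the reduction and that conditions~(a)-(c) persist, so that the induction hypothesis applied to the reduced forest yields~\eqref{eq:general-bound}.

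For~\eqref{eq:general-bound_diff}, the case $U=\emptyset$ follows directly from~\eqref{eq:general-bound} since $s\asymp t$. For $U\neq\emptyset$ I would apply the H\"older-in-time estimate $|(t-t_w)^\gamma G^{(k)}_{t-t_w}(y)-(s-t_w)^\gamma G^{(k)}_{s-t_w}(y)|\lesssim|t-s|^\kappa(s-t_w)^{-\kappa}(|G^{(k)}_{t-t_w}(y)|+|G^{(k)}_{s-t_w}(y)|)$ (valid for $\kappa\in[0,1]$) at the $U$-vertices to peel off the factor $|t-s|^\tipar$; the remaining integral is of $\I$-type with edge decorations at the vertices in $U$ lowered by $\kappa$ or $\bar\kappa$, and hypothesis~\eqref{eq:tipar-cond} is precisely what keeps condition~\eqref{eq:edges} valid after this shift, so~\eqref{eq:general-bound} applies. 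The main obstacle will be the promotion step for cross-subtree contractions: the choice of $\eta$ must simultaneously respect~(a) at $v$ (which bounds $\eta$ from below), the branch inequality~(b) along the path to the partner vertex (which bounds $\eta$ from above), and $a-\eta\in(0,n)$ in~(c); that a valid $\eta$ always exists relies crucially on the strict-inequality slack in all three hypotheses, and the invariance of $\Lambda$ under iterated promotions is what dictates the form of Definition~\ref{def:Lambda}.
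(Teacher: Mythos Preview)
Your inductive scheme has a structural gap that the paper's proof is specifically designed to avoid. When your chosen vertex $v\in I^*_\zF$ has \emph{two or more} children whose contracting partners lie outside $\zF_v$, integrating out all the children and ``promoting'' each external contraction to $v$ would leave the new leaf $v$ incident to several contracting edges, which is forbidden by Definition~\ref{def:contraction}. You could try to fully absorb all but one such contraction by taking $\eta=a$, but then a quick bookkeeping shows that $\Lambda(\zF_\rho)$ decreases by $a/4$ while $\Lambda(\zF_{\bar\rho})$ increases by $a/4$ whenever the absorbed partner lies in the opposite tree; your assertion that $\Lambda(\zF_\rho)$ and $\Lambda(\zF_{\bar\rho})$ are individually invariant is therefore false, and since the target bound~\eqref{eq:general-bound} separates $t^{\Lambda(\zF_\rho)}$ from $\bar t^{\Lambda(\zF_{\bar\rho})}$, the induction does not close. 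The same $\eta>0$ mechanism also shrinks the exponents of cross-tree contracting edges, potentially lowering the admissible range of $\theta$, which you do not track.

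The paper's proof sidesteps all of this by a different reduction: it removes \emph{one} edge (tree-edge or contracting edge) at a time, via a three-way case split. Unpaired leaves are simply integrated out (Case~1); a promotion as you describe is performed \emph{only} when the inner vertex has a single child (Case~2), so the multiple-contraction problem never arises; and otherwise (Case~3) a contracting edge is deleted together with its two endpoint leaves, with the key choice being a \emph{safe deletion}---a contracting edge both of whose endpoints lie in a minimal saturated branch---or, failing that, the edge of minimal $a_e$. This combinatorial notion is the crux: it is exactly what guarantees that after deletion the set $\fC(V_{\zF_w})\cap\fC(V^c_{\zF_w})$ remains nonempty for every relevant $w$, so that the $\min$ in condition~\ref{pt:branches} does not jump to $0$ and the $\max$ governing $\theta$ is preserved. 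Your proposal has no analogue of this mechanism, and the unsupported claim that ``conditions (a)--(c) persist'' is precisely where the argument would fail. Your treatment of~\eqref{eq:general-bound_diff} is closer in spirit to the paper's, though the paper again runs it as an induction on $|E_\zF|$ coupled to the same case analysis rather than a one-shot reduction to~\eqref{eq:general-bound}.
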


\begin{remark}
In the second part of Theorem \ref{thm:general-bounds},
due to condition~\ref{pt:edges}, there indeed exist $\kappa,\bar\kappa\in[0,1]$ satisfying \eqref{eq:tipar-cond} and,
if $U\neq\emptyset$, one can take $\tipar>0$ in \eqref{eq:general-bound_diff}.
\end{remark}
%
%
For the proof of Theorem \ref{thm:general-bounds}, we require several definitions and lemmas.
\begin{definition}
	Let $(\zF, \fC)$ be a contracted forest. We call a branch $\sigma$ of $\zF$ \emph{minimal for $\fC$} if the induced contraction on $\sigma$ is not empty and,
	for every strict branch $\tilde\sigma$ of $\sigma$, the induced contraction on $\sigma$ is empty.
	For instance, in Figure~\ref{fig:graphs}(a), both trees are minimal branches, but no strict branches are minimal,
	while in Figure~\ref{fig:graphs}(b) the only minimal branches are the leftmost and rightmost (non-leaf) strict branches. 

	We say that $\{u,v\}\in \fC$ is a \emph{safe deletion} if there exists a minimal branch $\sigma$ such that $u,v$ are both leaves of $\sigma$.
	See Figure~\ref{fig:unsafe-deletion} for an example. 
\end{definition}

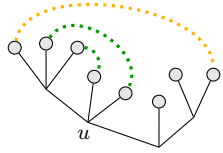
\begin{figure}[H] 
	\begin{center} 
		\begin{tikzpicture}  [scale=0.55,baseline=0]		 
			\node at (-.95,1.9)  [var] (a)  {};   
			\node at (-.20,2) [var] (b) {};  
			\node at (.55,1.9) [var] (c) {};  
			\node at (.95,1.2) [var] (d) {};  
			\node at (1.7,.8) [var] (e) {};  
			\node at (2.5,.6) [var] (f) {};  
			\node at (2.9,1.25) [var] (g) {};  
			\node at (3.8,1.25) [var] (h) {};  
			\node at (.8,.1) at (.7,-.18) {\scriptsize$u$};
			\draw (-.2,.9) -- (a); 
			\draw (-.2,.9) -- (b); 
			\draw (-.2,.9) -- (c); 
			\draw (-.2,.9) -- (.8,.1); 
			\draw (.8,.1) -- (d); 
			\draw (.8,.1) -- (e); 
			\draw (2.5,-.5) -- (f); 
			\draw (.8,.1) -- (2.5,-.5); 
			\draw (2.5,-.5) -- (3.33,.22); 
			\draw (3.33,.22) -- (g); 
			\draw (3.33,.22) -- (h); 
			\draw[orange!60!yellow, very thick, dotted]  (a) to[out=80, in=90, distance=1.55cm] (h) ; 
			\draw[bend left =95, darkgreen, very thick, dotted]  (b) to[out=90, in=95, distance=1cm] (e) ; 
			\draw[bend left =60, darkgreen, very thick, dotted]  (c) to (d) ; 
		\end{tikzpicture} 
	\end{center}\caption{\small The figure shows a \emph{contracted tree} with the dotted edges representing elements of the contraction;
	the green edges are \emph{safe deletion}s, while the orange edge is not. The figure further marks one of the inner nodes with `$u$' which is the root of the only \emph{minimal branch} of the tree for the drawn contraction.} 
	\label{fig:unsafe-deletion} 
\end{figure}

\begin{lemma}\label{lem:safe_del_exist} 
	Suppose that $(\zF, \fC)$ is a contracted tree with $\fC$ non-empty. Then there exists a safe deletion of $(\zF, \fC)$.
\end{lemma}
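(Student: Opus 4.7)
The plan is to locate a minimal branch by a finiteness argument and then exhibit a safe deletion within it. First I would observe that, since $\zF$ is a single tree with root $\rho$, the branch $\zF_\rho$ is $\zF$ itself, and its induced contraction coincides with $\fC$ (every element of $\fC$ is a pair of leaves of $\zF$ by Definition~\ref{def:contraction}). In particular, the collection
\[
\mcA \eqdef \{\sigma \text{ a branch of } \zF \,:\, \fC_\sigma \neq \emptyset\}
\]
is non-empty.

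Next, since $\zF$ is finite, $\mcA$ admits an element $\sigma^{\star}$ of minimum vertex-count $|V_{\sigma^{\star}}|$. I claim $\sigma^{\star}$ is minimal for $\fC$ in the sense of the definition given just above the statement. Indeed, any strict sub-branch $\tilde\sigma$ of $\sigma^{\star}$ (i.e.\ a branch of $\sigma^{\star}$ rooted at a non-root vertex of $\sigma^{\star}$) is itself a branch of $\zF$, and its vertex set is a strict subset of $V_{\sigma^{\star}}$. Minimality of $\sigma^{\star}$ in $\mcA$ therefore forces $\tilde\sigma \notin \mcA$, i.e.\ $\fC_{\tilde\sigma} = \emptyset$.

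Finally, $\fC_{\sigma^{\star}}$ being non-empty, any element $e = \{u,v\} \in \fC_{\sigma^{\star}}$ has both endpoints among the leaves of $\sigma^{\star}$ by the very definition of induced contraction, so $e$ is a safe deletion. I do not anticipate any genuine obstacle here; the statement is a short finiteness/minimality argument whose only role is to initiate the edge-deletion induction used in the proof of Theorem~\ref{thm:general-bounds}.
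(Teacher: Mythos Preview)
Your proposal is correct and follows the same approach as the paper: identify a minimal branch and take any contracting edge in its induced contraction. The paper's proof is terser, simply asserting that a minimal branch exists, whereas you spell out the finiteness argument (minimize vertex count over branches with non-empty induced contraction) that underlies this existence; the two are essentially identical.
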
 

\begin{proof} 
	There exists a minimal branch $\sigma$ of $\zF$. Then any $\{u,v\}\in \fC$ where $u,v$ are leaves of $\sigma$ is a safe deletion.
\end{proof} 

\begin{remark}\label{rem:min_branch} 
	If $\{u,v\}\in \fC$ is a safe deletion, then there exists a unique branch which contains $u,v$ as leaves and which is minimal -- this is the branch with root $o$, where $o$ is the maximal element (for the relation $\prec$) such that $o\prec u$ and $o\prec v$. 
\end{remark}
We furthermore require the following technical lemma, which includes a refinement of \cite[Lem.~3.3]{CCHS22_3D}.
For  $\gamma\in\R$ and a function $f\colon \T^n\setminus\{0\}\to\R$, define
\begin{equ}
	\|f\|_\gamma = \sup_{x\in \T^n\setminus\{0\}} |x|^\gamma |f(x)|\;.
\end{equ}
	\begin{lemma}\label{lem:heat-flow-estimates}
		Let $ n \ge 1 $, $ \gamma \in [0, n)$, and $k\in\N_0^n$.
		Let  $f\colon \T^n\setminus\{0\}\to\R$ be non-negative such that
		$	\|f\|_\gamma <\infty$.
		Then, there exists $C>0$, depending only on $n,\gamma,k$, such that
		for all $ \alpha\in [0, \gamma]$, $ t \in (0,1) $, and $ x\in \T^n\setminus\{0\}$, we have
		\begin{equ}[eq:heat-flow-est]
			\big(|G^{(k)}_t| * f\big)(x) \leq C t^{-\frac{\alpha+|k|}{2}}|x|^{\alpha-\gamma}\|f\|_\gamma\;.
		\end{equ}
		In particular,
		\begin{equ}[eq:double-heat-flow-est]
			\hspace*{-20pt}	\big(|G_t^{(k)}|  * f * |G_{\bar t}^{(\bar k)}|\big)(x)
			 \le C
			t^{-\frac{\alpha+|k|}{2}}
			\bar t^{-\frac{\bar\alpha+|\bar k|}{2}}\,
			|x|^{\alpha+\bar\alpha-\gamma}\|f\|_\gamma\;,
		\end{equ} 
		where $\bar t\in (0,1)$, $\bar k\in\N_0^n$, and $\bar\alpha\ge0$ is such that $\alpha+\bar\alpha\le\gamma$, with $C$ depending also on $\bar k$.

		Suppose further that $\delta\in\R$ and $0<s\le t\le 2s$. Then, for any $\kappa\in[0,1]$,
		\begin{equ}[eq:conv-est]
			\big(|t^\delta G^{(k)}_t-s^\delta G^{(k)}_s|*f\big)(x) \le C |t-s|^{\kappa} t^{\delta-\frac{\alpha+|k|}{2}-\kappa}|x|^{\alpha-\gamma}\|f\|_\gamma\;,
		\end{equ} 
		where the constant $C$ depends also on $\delta$.
		In particular,
		\begin{equ}[eq:double-conv-est_diff.]
			\big(| t^\delta G^{(k)}_{ t} \!- s^\delta G^{(k)}_{s}| * f * |G^{(\bar k)}_{\bar t}|\big)(x)
			\le C\,
			|t-s|^{\kappa} t^{\delta-\frac{\alpha+|k|}{2}-\kappa}
             \bar t^{-\frac{\bar\alpha+|\bar k|}{2}}		
           |x|^{\alpha+\bar\alpha-\gamma}\|f\|_\gamma\,,
		\end{equ}	
		and if, additionally, $\bar\delta\in\R$ and $0< \bar s \le \bar t \le 2\bar s$, then for any $\kappa,\bar\kappa\in[0,1]$,
		\begin{equs}[eq:double-conv-est_diff]{}
			\big(|t^\delta G^{(k)}_t \!-s^\delta G^{(k)}_s| * f * |\bar t^{\bar\delta}G^{(\bar k)}_{\bar t}\!-\bar s^{\bar\delta}G^{(\bar k)}_{\bar s}|\big)(x)
			\le C\, 
			|t-s|^{\kappa} |\bar t-\bar s|^{\bar\kappa}\\\times
			t^{\delta-\frac{\alpha+|k|}{2}-\kappa}
			\bar t^{\bar\delta-\frac{\bar\alpha+|\bar k|}{2}-\bar\kappa}|x|^{\alpha+\bar\alpha-\gamma}\|f\|_\gamma\;,
		\end{equs}
		with the constants $C$ in \eqref{eq:double-conv-est_diff.} and \eqref{eq:double-conv-est_diff} depending also on $\bar k$ and $\bar k,\bar\delta$ respectively.
	\end{lemma}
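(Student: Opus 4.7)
The strategy is to prove \eqref{eq:heat-flow-est} first by a Gaussian scaling argument, then derive the two-kernel bound \eqref{eq:double-heat-flow-est} by applying \eqref{eq:heat-flow-est} twice, and finally obtain the time-difference estimates \eqref{eq:conv-est}--\eqref{eq:double-conv-est_diff} by combining a triangle-inequality bound (giving the $\kappa=0$ endpoint) with exact differentiation under the integral (giving $\kappa=1$) and interpolating.

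For \eqref{eq:heat-flow-est}, since $f \ge 0$ with $f(y) \le \|f\|_\gamma |y|^{-\gamma}$, it suffices to bound $\bigl(|G_t^{(k)}| * |\cdot|^{-\gamma}\bigr)(x)$. Using $|G_t^{(k)}(y)| \lesssim t^{-(n+|k|)/2}\sum_{m\in\Z^n}\mre^{-c|y+m|^2/t}$ and noting that, for $|x| \le 1/2$ and $t\in(0,1)$, the terms with $m\neq 0$ contribute only lower-order remainders, the computation reduces to $\R^n$. The change of variables $y = \sqrt{t}\,z$ yields
\[
	\bigl(|G_t^{(k)}|*|\cdot|^{-\gamma}\bigr)(x) \lesssim t^{-(|k|+\gamma)/2}\, h(x/\sqrt{t})\;, \qquad h(w) \eqdef \bigl(|g^{(k)}|*|\cdot|^{-\gamma}\bigr)(w)\;,
\]
for $g$ a standard Gaussian. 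Splitting the integral defining $h(w)$ into $|z|\le|w|/2$ (where $g^{(k)}(w-z)$ has Gaussian decay) and $|z|>|w|/2$ (where $|z|^{-\gamma}$ is bounded by $(|w|/2)^{-\gamma}$), together with local integrability of $|\cdot|^{-\gamma}$ (since $\gamma<n$), gives $h(w) \lesssim (1+|w|)^{-\gamma}$. For any $\alpha\in[0,\gamma]$ one has $(1+|w|)^{-\gamma}\lesssim |w|^{\alpha-\gamma}$ (immediate for $|w|\le 1$ since $\alpha\le\gamma$, and using $\alpha\ge 0$ for $|w|\ge 1$), yielding \eqref{eq:heat-flow-est}. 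The double-convolution bound \eqref{eq:double-heat-flow-est} then follows by applying \eqref{eq:heat-flow-est} twice: first to $\tilde f \eqdef f*|G_{\bar t}^{(\bar k)}|$ with exponents $(\bar\alpha,\gamma)$ to get $\|\tilde f\|_{\gamma-\bar\alpha}\lesssim \bar t^{-(\bar\alpha+|\bar k|)/2}\|f\|_\gamma$, then to $|G_t^{(k)}|*\tilde f$ with exponents $(\alpha,\gamma-\bar\alpha)$, which is legitimate thanks to the hypothesis $\alpha+\bar\alpha\le\gamma$.

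The $\kappa=0$ case of \eqref{eq:conv-est} is immediate from the triangle inequality $|t^\delta G_t^{(k)}-s^\delta G_s^{(k)}|\le t^\delta|G_t^{(k)}|+s^\delta|G_s^{(k)}|$, the equivalence $s\asymp t$ coming from $s\le t\le 2s$, and \eqref{eq:heat-flow-est}. For $\kappa=1$, write
\[
	t^\delta G_t^{(k)} - s^\delta G_s^{(k)} = \int_s^t \partial_r\bigl(r^\delta G_r^{(k)}\bigr)\,\mrd r = \int_s^t \bigl(\delta\, r^{\delta-1} G_r^{(k)} + r^\delta \Delta G_r^{(k)}\bigr)\,\mrd r\;,
\]
and observe that $\Delta G_r^{(k)}$ is a linear combination of $G_r^{(k')}$ with $|k'|=|k|+2$. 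Applying \eqref{eq:heat-flow-est} under the integral and using $s\asymp t$ gives $\int_s^t r^{\delta-1-(\alpha+|k|)/2}\,\mrd r \lesssim |t-s|\, t^{\delta-1-(\alpha+|k|)/2}$, which is the $\kappa=1$ bound. Taking the minimum of the two endpoints and applying the elementary inequality $\min(A,B\eta)\le A^{1-\kappa}(B\eta)^\kappa$ with $\eta=|t-s|/t$ produces \eqref{eq:conv-est} for all $\kappa\in[0,1]$. Finally, \eqref{eq:double-conv-est_diff.} and \eqref{eq:double-conv-est_diff} follow by the same two-step convolution scheme as for \eqref{eq:double-heat-flow-est}, combining \eqref{eq:conv-est} with \eqref{eq:heat-flow-est} (respectively with another copy of \eqref{eq:conv-est}), and using $\alpha+\bar\alpha\le\gamma$ to distribute the regularity exponents. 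The only mild obstacles are the bookkeeping of the exponent constraints in the two-kernel bounds and the verification that the periodic tail of $G_t$ on $\T^n$ is negligible for $t\in(0,1)$ and $|x|\le 1/2$; both are essentially mechanical.
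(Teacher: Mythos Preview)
Your proposal is correct and follows essentially the same route as the paper, with minor organizational differences. For \eqref{eq:heat-flow-est}, the paper proves the endpoints $\alpha=0$ (via the same near/far splitting you use) and $\alpha=\gamma$ (via the Besov heat-flow estimate \eqref{eq:heat_flow_estimates}) and then interpolates, whereas your Gaussian rescaling yields the single bound $h(w)\lesssim(1+|w|)^{-\gamma}$ which gives all $\alpha$ at once and avoids invoking \eqref{eq:heat_flow_estimates}; your version is slightly more self-contained. For \eqref{eq:conv-est}, the paper interpolates at the pointwise kernel level---it bounds $|\partial_t G_t^{(k)}(x)|$ directly, uses the mean value theorem to get $|G_t^{(k)}-G_s^{(k)}|\lesssim |t-s|^\kappa t^{-\kappa-|k|/2}G_{2t}$ for all $\kappa\in[0,1]$ in one step, and handles general $\delta$ separately via a triangle inequality---while you interpolate the final convolution bounds via $\min(A,B\eta)\le A^{1-\kappa}(B\eta)^\kappa$. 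Both organizations work; the paper's kernel-level interpolation is marginally cleaner, but your use of $\partial_r G_r=\Delta G_r$ handles $\delta$ and the time derivative simultaneously. The reduction to $\R^n$ that you flag as mechanical is indeed routine given the Gaussian upper bound \eqref{eq:G^k-bound}, which the paper proves and uses in place of any such reduction.
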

For the proof, note that, since $ G_t(x) = \sum_{m \in\Z^n} (4\pi)^{-n/2}t^{-n/2}\mre^{-|x+m|^2/(4t)}$,
	\begin{equ}[eq:G_k_upperbound]
		|G^{(k)}_t(x)|\lesssim \mre^{-|x|^2/(4t)} t^{-n/2} (t^{-|k|/2} + t^{-|k|}|x|^{|k|})\;,
	\end{equ}
	which combined with
	$ \mre^{-|z|} \lesssim |z|^{-p}$, $ p\geq 0$, implies
	\begin{equ}[eq:heat_kernel_bound]
		|G^{(k)}_t(x)|\lesssim \mre^{-|x|^2/(8t)}
		t^{-\frac n2 -|k|+p} |x|^{-2p + |k|}\;,\qquad \forall p\geq |k|/2\;.
	\end{equ}	
	In particular, taking $p=|k|/2$ in \eqref{eq:heat_kernel_bound}, we get
	\begin{equ}[eq:G^k-bound]
		|G^{(k)}_t(x)|\lesssim \mre^{-|x|^2/(8t)}							t^{-\frac n2 -\frac{|k|}{2}} \asymp t^{-\frac{|k|}{2}} G_{2t}(x)\;.
	\end{equ}

	\begin{proof}[of Lemma \ref{lem:heat-flow-estimates}]
		To show \eqref{eq:heat-flow-est}, it suffices to prove the bound for $\alpha=0$ and $\alpha=\gamma$ with the claim for $\alpha\in(0,\gamma)$ following by interpolation.
		
			Suppose first $\alpha=\gamma$. Then by $|f|_{\CC^{-\gamma}}\lesssim \|f\|_\gamma$ and \eqref{eq:heat_flow_estimates},
			$|G_t * f|_\infty \lesssim t^{-\gamma/2}\|f\|_\gamma$
			and, by also using \eqref{eq:G^k-bound}, we have
			\begin{equ}
			\big||G_t^{(k)}| * f\big|_\infty \lesssim t^{-\frac{|k|}{2}}|G_{2t} * f|_\infty \lesssim t^{-\gamma/2-|k|/2}\|f\|_\gamma\;,
			\end{equ}
			which proves the desired bound.
			
			Suppose next $\alpha=0$.
			We write
			\begin{equs}[eq:Pkf_split]{}
				&\big(|G^{(k)}_t|* f\big)(x) = \int_{\T^n}\!\!| G^{(k)}_t(x-z)| f(z) \mrd z\,
				\\
				&\qquad \lesssim  \|f\|_\gamma\Big(
				\int_{|z|<|\frac{x}{2}|}\hspace*{-5pt} |G^{(k)}_t(x-z)| |z|^{-\gamma} \mrd z\,
				+ \int_{|z|\geq |\frac{x}{2}|}\hspace*{-5pt}  |G^{(k)}_t(x-z)| |z|^{-\gamma} \mrd z
				\Big)\;.\quad
			\end{equs}
			Applying \eqref{eq:G^k-bound} and the fact that $\int\mrd x\mre^{-|x|^2/(4t)} t^{-\frac{n}{2}} \lesssim 1$,
			we obtain 
			$\int |G^{(k)}_t(y)|\mrd y \lesssim t^{-|k|/2}$.
			Therefore, the second integral in the second line of \eqref{eq:Pkf_split} is of order $t^{-|k|/2}|x|^{-\gamma}$.
			
			For the first integral, we remark that $\int_{|z|<|x|/2} |z|^{-\gamma}\mrd z \asymp |x|^{n-\gamma}$.
			Furthermore, $|z|<|x|/2$ implies $|x-z|\asymp |x|$ and thus $|G^{(k)}_t(x-z)|\lesssim t^{-|k|/2}|x|^{-n}$ where we used \eqref{eq:heat_kernel_bound} with $p=\frac{n+|k|}{2}$.
			Therefore the first integral is also of order $t^{-|k|/2}|x|^{-\gamma}$ which completes the proof of~\eqref{eq:heat-flow-est}.
			
			The estimate~\eqref{eq:double-heat-flow-est} follows from applying~\eqref{eq:heat-flow-est} twice with $t, k,\alpha, \gamma$ and $\bar t, \bar k,\bar\alpha, \gamma-\alpha$ (instead of $t,k,\alpha,\gamma$ therein) respectively.
			
	To prove \eqref{eq:conv-est} for $0<s< t\le 2s$, suppose first $\delta=0$.
	Similarly to \eqref{eq:G_k_upperbound}, one has, for all $t>0$ and $x\in\T^n$,
	\begin{equs}
		\big|\partial_t G^{(k)}_t(x)\big| &\lesssim
		\big(t^{-1}+ |x|^2 t^{-2}\big) t^{-\frac{n}{2}} 
		\mre^{-\frac{|x|^2}{4t}} \big(t^{-\frac{|k|}{2}} + t^{-|k|}|x|^{|k|}\big)\\
		&\lesssim t^{-\frac{n}{2}} 
		\mre^{-\frac{|x|^2}{8t}} t^{-\frac{|k|}{2}-1}\;,
	\end{equs}
	where we used $ \mre^{-|z|} \lesssim |z|^{-|k|/2-1}$ in the second inequality.
	It thus follows that for all $x\in\T^n$, there exists $r\in(s,t)$ such that
	\begin{equ}
		\big|G^{(k)}_t(x)-G^{(k)}_s(x)\big|\lesssim
		|t-s| r^{-1-\frac{|k|}{2}} G_{2r}(x)\;,
	\end{equ}
	and hence, using the assumption $ s< t\le 2s$, we have, for all $\kappa\in[0,1]$,
	\begin{equ}
		\big|G^{(k)}_t(x)-G^{(k)}_s(x)\big|\lesssim
		|t-s|^\kappa t^{-\kappa-\frac{|k|}{2}} G_{2t}(x)\;.
	\end{equ}
    Therefore, by \eqref{eq:heat-flow-est} (applied with $k=0$), we obtain
	\begin{equ}[eq:heat-flow-est_diff]
		\big( \big|G^{(k)}_t-G^{(k)}_s\big| * f \big)(x) \le C |t-s|^\kappa t^{-\frac{\alpha+|k|}{2}-\kappa}|x|^{\alpha-\gamma}\|f\|_\gamma\;,
	\end{equ}
	which proves \eqref{eq:conv-est} for $\delta=0$.
	Suppose now $\delta\neq 0$ and write
	\begin{equ}
		\big|t^\delta G^{(k)}_t-s^\delta G^{(k)}_s\big| \le \big|t^\delta-s^\delta\big| 
		\big|G^{(k)}_t(x)\big| + s^\delta\big|G^{(k)}_t(x)-G^{(k)}_s(x)\big|\;,
	\end{equ}
	from which we have, for all $\kappa\in[0,1]$,
	\begin{equ}
		\textnormal{LHS of } \eqref{eq:conv-est} \lesssim |t-s|^\kappa t^{\delta-\kappa} \big(\big|G^{(k)}_t\big|*f\big)(x) + t^\delta\big(\big|G^{(k)}_t-G^{(k)}_s\big|*f\big)(x)\;,
	\end{equ}
	where we also used that $t\in [s,2s]$ which implies $|t^\delta-s^\delta| \lesssim (t-s) t^{\delta-1}$. Therefore, applying \eqref{eq:heat-flow-est} and \eqref{eq:heat-flow-est_diff}, we obtain the bound \eqref{eq:conv-est}.

	Finally, the last two estimates are simply applications of \eqref{eq:conv-est} and \eqref{eq:heat-flow-est}.
	\end{proof}

\begin{proof}[of Theorem~\ref{thm:general-bounds}]
	\emph{Proof of \eqref{eq:general-bound}}.
	We proceed by induction on $ |E_{\zF}|$.
	There are two base cases.

	\textbf{Base case 1.} \label{bcase1}
	$|E_{\zF}|=0$, in which case ${ \zF_{\rho}}=(\{\rho\},\emptyset)$, ${ \zF_{\bar\rho}}=(\{\bar\rho\},\emptyset)$, and thus
	\begin{equ}
		\I^{\bzF,\bfC}(t,\bar t,x-\bar x)= g_\rho(t_{\rho}) g_{\bar \rho}(t_{\bar\rho}) =  t^{\beta_{\rho}} \bar t^{\beta_{\bar\rho}}\;,
	\end{equ} 
	which is the requested bound since $\Lambda( \zF_{\rho})=\beta_\rho$ and $\Lambda( \zF_{\bar\rho})=\beta_{\bar\rho}$ and the upper bound on $\theta$ is zero by our convention $\max\emptyset=0$.

	\textbf{Base case 2.} \label{bcase2}
	$(\bzF,\bfC)$ is as in Example~\ref{ex:Lambda}:
	$ \zF_{\rho}$ and $ \zF_{\bar\rho}$ are respectively
	$I^{(j)}(v)$ and $I^{(\bar j)}(\bar v)$ and $\fC\neq\emptyset$,
	where we write $I^{(j)}(v)$ for the decorated tree $\<IXi>$ with a root $\rho$, one leaf $v$, and one edge $f=(\rho,v)$ with decoration $j=k_f\in\N^n_0$,
	and similarly for $I^{(\bar j)}(\bar v)$ with $\bar j=k_{\bar f}$ and $\bar f=(\bar\rho,\bar v)$.

	In this case,
	$\I^{\bzF,\bfC}(t,\bar t,x-\bar x)$ equals
	\begin{equ}[eq:general-bound-first]
		t^{\beta_\rho} \bar t^{\beta_{\bar{\rho}}}\!\! 
		\int_{0}^{t}\!\!\int_{0}^{\bar t}\!\mrd t_{\bar v}\mrd t_v (t-t_v)^{\gamma_f}(\bar t-t_{\bar v})^{\gamma_{\bar{f}}} g_v(t_v) g_{\bar v}(t_{\bar v})
		\big(\big|{G}_{t-t_v}^{(j)}\big| * \big|\cdot\big|^{-a_e} * \big|{G}_{\bar t-t_{\bar v}}^{(\bar j)}\big| \big)(x-\bar x)\;,
	\end{equ}
	where $e=\{v,\bar v\}$ is the only element of $\fC$.
	Then, using \eqref{eq:double-heat-flow-est} with $ \gamma=a_{{e}}$ (which is in $(0,n)$ by assumption~\ref{pt:contractions}),  $\alpha=\bar\alpha=(a_{{e}}-\theta)/2$ (so $\alpha+\bar\alpha\in[0,\gamma]$), and $k=j, \bar k=\bar j$, we have, uniformly in $z\eqdef x-\bar x\in \T^n\setminus\{0\}$,
	\begin{equs}
		\big(\big|{G}_{t-t_v}^{(j)} \big| * \big|\cdot\big|^{-a_e} * \big|{G}_{\bar t-t_{\bar v}}^{(\bar j)}\big| \big)(z)
		\lesssim (t-t_v)^{\frac{-2|j|-a_{{e}}+\theta}{4}}(\bar t- t_{\bar v})^{\frac{-2|\bar j|-a_{{e}}+\theta}{4}}|z|^{-\theta}\;.
	\end{equs}
	Estimating \eqref{eq:general-bound-first} using the last inequality, integrating over time, and 
	recalling the quantities $\Lambda({\zF_{\rho}}),\Lambda({\zF_{\bar\rho}})$ \eqref{eq:ex-Lambda} from Example~\ref{ex:Lambda},
	we obtain the claimed bound. 
	(Note that, if the leaf $u\in\{v,\bar v\}$ is non-Dirac, then
	by assumption~\ref{pt:edges} applied to $u$, the exponent of $(t-t_u)$ in the resulting time integral is greater than $-1$
	and, by definition, $g_u(t) = t^{\beta_u}\bone_{t>0}$ and $\beta_u>-1$,
	and thus the singularities are integrable.
	On the other hand, if $v$ is Dirac, then $g_v=\delta_0$ and the corresponding time integral simply equals $t^{\gamma_f-|j|/2 -a_e/4+\theta/4}$, and likewise for $\bar v$.)
	
	Suppose now we are not in either of the base cases (in particular $|E_{\zF}|\ge 1$). We consider the following three (mutually exclusive) cases (which,
	since we assumed that $\fC(\roots_\zF)=\emptyset$, cover all possible cases)
	and prove the bound \eqref{eq:general-bound} for each case using the inductive hypothesis.

	\textbf{Case 1.}\label{case1}
	There exists a leaf $v\in L^*_{\zF}$ which is not paired with any other leaf. 
	Denote by $u$ the parent of $v$.
	In this case, the variables $x_{v},t_{v}$  in the integral~\eqref{eq:general-conv}
	appear only in the term
	\begin{equ}[eq:case1]
		\int_{D} \mrd t_{v}\mrd x_{v} \big|G^{(k_{u,v})}_{t_{u}-t_v}(x_{u}-x_v)\big| g_v(t_{v})(t_{u}-t_{v})^{\gamma_{u,v}}\;.
	\end{equ}
	Using the estimate $\int_{\T^n} |G^{(k)}_t(y)|\mrd y \lesssim t^{-|k|/2}$ (which follows from \eqref{eq:G^k-bound}),
	we obtain
	\begin{equ}
		\eqref{eq:case1}\lesssim \int_{0}^{t_{u}} g_v(t_{v})(t_{u}-t_{v})^{\gamma_{u,v}-\frac{|k_{u,v}|}{2}} \mrd t_{v}
		\propto t_{u}^{1+\beta_{v}+\gamma_{u,v}-\frac{|k_{u,v}|}{2}}\;,
	\end{equ}
	where in the end we used that, if $v$ is non-Dirac, we have $\gamma_{u,v}-|k_{u,v}|/2>-1$ (which is due to assumption~\ref{pt:edges}
	applied to $v\in V_{\zF}$) and $\beta_v>-1$ (which holds by definition).

	Now, let $({\tilde{\bzF}},\bfC)$ be the decorated 
	contracted forest left after deleting $v,(u,v)$, so that $\tilde{\zF}=(V_{\zF}\setminus\{v\}, E_{\zF}\setminus\{(u,v)\})$,
	and changing the decoration at $u$ to
	$\tilde\beta_{u}\eqdef \beta_{u} + 1 +\beta_v+\gamma_{u,v}-|k_{u,v}|/2$.
	(All other assignments remain unchanged.)

	We remark that, if $u \in L^*_{\tilde\zF}$, then $\tilde\beta_u = \Lambda(\zF_u)$ and thus  $\tilde{\beta}_{u}>-1$ due to assumption~\ref{pt:branches} applied to $u\in V^*_{\zF}$. (In particular, $u$ is a non-Dirac vertex in the new graph.)
	Therefore, $(\tilde{\bzF},\bfC)$ is a decorated contracted forest in the sense of \defi{def:DCF-I-K}\ref{pt:DCF}.

	It thus follows that
	\begin{equ}
	\I^{\bzF,\bfC}(t,\bar t, x-\bar x)\lesssim \I^{\tilde{\bzF},\bfC}(t,\bar t, x-\bar x)\;.
	\end{equ}
	Note that $\Lambda(\tilde{\zF}_{\rho})=\Lambda(\zF_{\rho})$ and that the restrictions on $\theta$ do not change. It therefore remains to show that $(\tilde{\bzF},\bfC)$
	satisfies conditions~\ref{pt:edges}-\ref{pt:contractions} to conclude the proof by applying the inductive hypothesis.
	Conditions~\ref{pt:edges} and \ref{pt:contractions} are clearly satisfied while condition \ref{pt:branches} follows from the fact that $\Lambda(\tilde\zF_{w})=\Lambda(\zF_{w})$ for all $w\in V_{\tilde\zF}$
	by our choice of $\tilde\beta_u$.
	
	\textbf{Case 2.}\label{case2}
	Every leaf in $L^*_{\zF}$ is paired in $\fC$
	and there exist $u\in I^*_{\zF}$
	and $v\in L_{\zF}$ such that $v$ is the only child of $u$.
	Let $w$ be the leaf paired with $v$ in $\fC$.
	In this case, the variables $x_{v},t_{v}$  in the integral~\eqref{eq:general-conv}
	appear only in the term
	\begin{equ}
		\int_{D} \mrd t_{v}\mrd x_{v} |x_v-x_w|^{-a_{v,w}}(t_{u}-t_{v})^{\gamma_{u,v}}\big| G^{(k_{u,v})}_{t_{u}-t_v}(x_{u}-x_v) \big| g_v(t_{v})\;.
	\end{equ}
	By \eqref{eq:heat-flow-est} (applied with $\gamma=a_{v,w}$, $k=k_{u,v}$, and $\alpha=0$), this integral is bounded above by a multiple of
	\begin{equ}
		|x_u-x_w|^{-a_{v,w}}\!\int_{0}^{t_{u}} (t_{u}-t_v)^{\gamma_{u,v}-\frac{|k_{u,v}|}{2}} g_v(t_v)\, \mrd t_v
		\propto
		|x_u-x_w|^{-a_{v,w}}t_{u}^{1+\beta_v+\gamma_{u,v}-\frac{|k_{u,v}|}{2}},\quad
	\end{equ}
	where, in case $v$ is non-Dirac, we use assumption~\ref{pt:edges} (which implies $\gamma_{u,v} - |k_{u,v}|/2>-1$) and $\beta_v>-1$ when taking the integral.
	We therefore obtain
	\begin{equ}[eq:case2-induct_I]
		\I^{\bzF,\bfC}(t,\bar t,x-\bar x) \lesssim \I^{{\tilde{\bzF}},\tilde{\bfC}}(t,\bar t,x-\bar x)\;,
	\end{equ}
	where $({\tilde{\bzF}},\tilde{\bfC})$ consists of the contracted tree $(\tilde{\zF},\tilde{\fC})$, with $\tilde{\zF}\eqdef( V_{\zF}\setminus\{v\}, E_{\zF}\setminus\{(u,v)\})$,
	and $\tilde{\fC}\eqdef \fC\sqcup\{\{u,w\}\}\setminus\{\{v,w\}\}$, which is decorated as follows.	
	The contracting edge $\{u,w\}$ is assigned $a_{v,w}$, and the vertex decoration at $u$ changes to $\tilde\beta_{u} \eqdef \beta_{u}+1+\beta_{v}+\gamma_{u,v}-|k_{u,v}|/2$.
	We let all other assignments remain the same.

	Observe that $\Lambda(\tilde \zF_{\nu}) = \Lambda( \zF_{\nu})$ for all $\nu\in V^{*}_{\tilde{\zF}}$ due to the choice of $\tilde{\beta}_u$.
	In particular, for the leaf $u\in V_{\tilde{\zF}}$,
	\begin{equ}
	\tilde\beta_u = \Lambda(\zF_{u}) + a_{v,w}/4 > -1\;,
	\end{equ}
	where in the inequality we applied assumption~\ref{pt:branches} to $u\in V_{\zF}$.
	As a consequence, $(\tilde{\bzF},\tilde{\bfC})$ possesses the structure of a decorated contracted forest in the sense of \defi{def:DCF-I-K}\ref{pt:DCF} and,
	in particular, the right-hand side of \eqref{eq:case2-induct_I} indeed makes sense.
	We remark further that $\tilde{\fC}(\roots_{\tilde\zF})=\emptyset$ since $u\notin \roots_\zF$ and ${\fC}(\roots_{\zF})=\emptyset$ by assumption.
	
	We now claim that $({\tilde{\bzF}},\tilde{\bfC})$ satisfies conditions~\ref{pt:edges}-\ref{pt:contractions}.
	First, condition~\ref{pt:contractions} obviously holds since $\{a_e\,:\; e\in\tilde{\fC}\}=\{a_e\,:\; e\in \fC\}$.
	Furthermore, supposing without loss of generality that $u,v\in V_{\zF_{\rho}}$, we have $\tilde{\zF}_{\bar\rho}=\zF_{\bar\rho}$, so the conditions which correspond $\tilde\zF_{\bar\rho}$ clearly hold.
	It thus suffices to verify the conditions associated with $\tilde{ \zF}_{\rho}$. 
	Condition~\ref{pt:edges} immediately follows for each $\nu\in I^*_{\tilde \zF_{\rho}}$ since it holds for each $\nu\in I^*_{\zF_{\rho}}$, the edge decorations are intact, and the $\max$ in \eqref{eq:edges} does not change.
	Finally, condition~\ref{pt:branches} holds because it holds in $(\bzF,\bfC)$ and, due to the change of decorations, for non-leaf vertices in $V^*_{\tilde{\zF}}$ the left-hand side of \eqref{eq:branches} remains the same. 
	
	Therefore, conditions~\ref{pt:edges}-\ref{pt:contractions} are satisfied for the new graph and thus the conclusion follows by the inductive hypothesis. (We remark that $\Lambda(\tilde \zF_{\rho})=\Lambda(\zF_{\rho})$ and that the restrictions on $\theta$ do not change.)

	\textbf{Case 3.}\label{case3} 
	Every leaf in $L^*_{\zF}$ is paired in $\fC$
	and if any $w\in I^*_{\zF}$ is a parent of a leaf, then $w$ has at least two children (and, as mentioned earlier, we are not in \hyperref[bcase2]{Base case 2}).
	In this case, we let $e = \{u,v\}\in\fC$
	be a safe deletion if one exists in $\fC$, and if no safe deletions exist,
    then we choose $e = \{u,v\}\in\fC$ such that $a_e = \min_{f\in\fC} a_f$.

	We claim that
	\begin{equ}[eq:case3_induct_I]
		\I^{\bzF,\bfC}(t,\bar t,x-\bar x) \lesssim
		\I^{\tilde{\bzF},\tilde{\bfC}}(t,\bar t,x-\bar x)\;,
	\end{equ}
	for a decorated contracted forest $({\tilde{\bzF}},\tilde{\bfC})$ which we define as follows and to which we can apply the inductive hypothesis.

	Delete $e$ from the graph and denote $\tilde{\fC}\eqdef \fC\setminus\{e\}$ and let all the assignments to its contracting edges remain unchanged. Remove moreover the vertices $u,v$ and edges $(\hat u,u)$, $(\hat v,v)$ (recall $\hat u$ respectively $\hat v$ are the parents of $u$ respectively $v$)
	and denote $\tilde{ \zF}\eqdef( V_{\zF}\setminus\{u,v\}, E_{\zF}\setminus \{(\hat u,u),(\hat v,v)\})$ (so indeed $\tilde \zF_{\rho}$ and $\tilde \zF_{\bar\rho}$ are the two connected components, i.e. trees, of the forest $\tilde{ \zF}$).
	We then let all the vertex and edge decorations be inherited from the original graph except for the vertex decorations at $\hat u,\hat v$, which  are changed to the following. If $\hat u\neq \hat v$, we set
	\begin{equs}
		\tilde\beta_{\hat u}&\eqdef 1+\beta_{\hat u}+\beta_{u}-a_e/4 - |k_{\hat u,u}|/2+\gamma_{\hat u,u}\;,\,\qquad  
		\\
		\tilde\beta_{\hat v}&\eqdef 1+\beta_{\hat v}+\beta_{v}-a_e/4 - |k_{\hat v,v}|/2+\gamma_{\hat v,v}\,,\qquad   
	\end{equs}
	otherwise we set
	\begin{equ}
		\tilde\beta_{\hat u}\eqdef 2+\beta_{\hat u}+\beta_{u}+\beta_v-a_e/2 - |k_{\hat u,u}|/2+\gamma_{\hat u,u} - |k_{\hat u,v}|/2+\gamma_{\hat u,v}\;.\qquad
	\end{equ}
	If $\hat u \in L^*_{\tilde\zF}$, then $\hat u =\hat v$ and the only children of $\hat u$ in $\zF$ are $u,v$
	(this follows from the conditions of Case 3, in particular the fact that we are not in Cases 1 or 2).
	Moreover, by the above assignments, one has $\tilde\beta_{\hat u} = \Lambda(\zF_{\hat u})$ and thus $\tilde\beta_{\hat u}>-1$ due to assumption~\ref{pt:branches} applied to $\hat u\in I^*_{\zF}$ (recall that $\min\emptyset=0$ by our convention). In particular, if $\hat u \in L^*_{\tilde\zF}$, then it is non-Dirac in the new graph $\tilde\zF$.

	Therefore, $({\tilde{\zF}},\tilde{\fC})$ equipped with the data described above constitutes a decorated contracted forest, which we denote as usual by $(\tilde{\bzF},\tilde{\bfC})$.
	
	To prove~\eqref{eq:case3_induct_I}, we remark that
	the inner integral therein corresponding to the vertices $u,v$ is equal to
	\begin{equs}[eq:inner_int]
		\!\!\!\!\int_{0}^{t_{\hat u}}\!\!\!\int_0^{t_{\hat v}}\!\!\!\int_{(\T^n)^{\{u,v\}}} 
		\mrd t_u \mrd t_v\mrd x_u \mrd x_v \,
		(t_{\hat u}-t_u)^{\gamma_{\hat u,u}}
		(t_{\hat v}-t_v)^{\gamma_{\hat v,v}} g_u(t_u) g_v(t_v) \\
		\,  |x_u-x_v|^{-a_{e}} \big| G^{(k_{\hat u,u})}_{t_{\hat u}-t_u}(x_{\hat u}-x_u)G^{(k_{\hat v,v})}_{t_{\hat v}-t_v}(x_{\hat v}-x_v)\big|\;.
	\end{equs}
	Applying~\eqref{eq:double-heat-flow-est} with $\gamma = 2\alpha= 2\bar\alpha= a_e$ (which is in $(0,n)$ by assumption~\ref{pt:contractions}), the spatial integral is of order
	\begin{equ}
		(t_{\hat u}-t_u)^{-\frac{1}{4}a_e - \frac{1}{2}|k_{\hat u,u}|} (t_{\hat v}-t_v)^{-\frac14 a_e - \frac12|k_{\hat v,v}|}\;.
	\end{equ}
	 Using this in \eqref{eq:inner_int}, we obtain 
	\begin{equs}
		\int_{0}^{t_{\hat u}}\!\!\!\!\int_0^{t_{\hat v}}\!
		\mrd t_u \mrd t_v (t_{\hat u}-t_u)^{\gamma_{\hat u,u}-\frac{1}{4}a_e - \frac{1}{2}|k_{\hat u,u}|} (t_{\hat v}-t_v)^{\gamma_{\hat v,v}-\frac14 a_e - \frac12|k_{\hat v,v}|} g_u(t_u) g_v(t_v) 
		\\
		\propto t_{\hat u}^{\tilde\beta_{\hat u}}t_{\hat v}^{\tilde\beta_{\hat v}}\,,
	\end{equs}
	where in the proportionality we used that if $u$ is non-Dirac, by condition~\ref{pt:edges} respectively by definition, the exponent of $(t_{\hat u}-t_u)$ respectively $t_u$ is greater than $-1$, and likewise for $v$ and the integrands indexed by $v$.
	Remarking that $x_u,x_v,t_u,t_v$ do not feature in any terms in the integrand of~\eqref{eq:general-conv} except for~\eqref{eq:inner_int},
	the above yields the claimed bound~\eqref{eq:case3_induct_I}.
	
	Next, note that one still has $\tilde{\fC}(\{\rho,\bar\rho\})=\emptyset$ as it is satisfied by $\fC$ and we made only a deletion from $\fC$ without any addition to the contracting edges, from which we also deduce that condition~\ref{pt:contractions} holds for $\tilde\bfC$.
	
	We next verify conditions~\ref{pt:edges}-\ref{pt:branches} for $(\tilde{\bzF},\tilde{\bfC})$.
	For condition~\ref{pt:edges}, if $w\in V^{*}_{\tilde{\zF}}$ is non-Dirac, then we also have that $w\in V^{*}_{\zF}$ is non-Dirac
	and moreover
	\begin{equ}
		-\max \{a_f/4 \,:\, f\in \tilde \fC(V_{\tilde \zF_{w}})\cap \tilde \fC(V^c_{\tilde \zF_{w}}) \} \geq - \max \{a_{f}/4 \,:\, f\in \fC(V_{\zF_{w}})\cap \fC(V^c_{\zF_{w}}) \} 
	\end{equ}
	because removing $e$ from $\fC$ can only cause the $\max$ to decrease (recall our convention $\max \emptyset =0$).
	Therefore condition~\ref{pt:edges} is satisfied for $({\tilde{\bzF}},\tilde{\bfC})$ since it is satisfied for $({\bzF},\bfC)$.

	For condition~\ref{pt:branches}, consider $w\in I^*_{\tilde{\zF}}$.
	By construction, $\Lambda(\tilde  \zF_{w}) = \Lambda( \zF_{w})$.
	It is thus enough to show that
	\begin{equ}[eq:min_increase]
		\min \{a_f/4 \,:\, f\in \tilde \fC(V_{\tilde \zF_{w}})\cap \tilde \fC(V^c_{\tilde \zF_{w}}) \} \geq \min \{a_f/4 \,:\, f\in \fC(V_{\zF_{w}})\cap \fC(V^c_{\zF_{w}}) \}\;.
	\end{equ}
	First, if $\{u,v\} \notin \fC(V_{\zF_{w}})\cap \fC(V^c_{\zF_{w}})$,
	then we have equality in \eqref{eq:min_increase}
	since the corresponding sets over which the $\min$ is taken are the same.
	It remains to consider the case that $\{u,v\} \in \fC(V_{\zF_{w}})\cap \fC(V^c_{\zF_{w}})$. 
	Recalling our convention that $\min \emptyset = 0$, it suffices to show that $\tilde \fC(V_{\tilde \zF_{w}})\cap \tilde \fC(V^c_{\tilde \zF_{w}})$ is not empty.

	To this end, note first that since $\{u,v\} \in \fC(V_{\zF_{w}})\cap \fC(V^c_{\zF_{w}})$ and $w\notin L_{\zF}$, either  $w\prec u$ and $ w\npreceq v$, or $w\npreceq u$ and $ w\prec v$. Suppose without loss of generality that $w\prec u$ and $w\npreceq v$, so in particular $w\preceq \hat u$ 
	and thus $\hat u\notin \roots_\zF$ since $w\notin \roots_\zF$.
	Therefore, by the assumptions of this case, $\hat u$ has at least two children, and thus there exists $u'\in L_\zF \setminus \{u,v\}$ with $w\prec u'$.
	Again by the assumptions of this case, there exists $\bar e \in \fC(V_{\zF_{w}})$ which is incident to $u'$. We clearly have $\bar e\neq e$.

	We next claim that $\fC_{\zF_{w}}=\emptyset$, where $\fC_{\zF_{w}}$ is as in Definition \ref{def:CT}.
	Indeed, if $e$ is a safe deletion, then let $o \in I_{\zF}$ be as in \rem{rem:min_branch}, so that $\zF_o$ is minimal and $o\prec u$ and $o\prec v$. Then necessarily $o\prec w$, so one has $\fC_{\zF_w} = \emptyset$ by minimality of $\zF_o$.
	If, on the other hand, $e$ is not a safe deletion, then, by our choice of $e$, no safe deletions exist in $(\zF,\fC)$ and thus $\fC_{\zF_\rho}\cup \fC_{\zF_{\bar\rho}} = \emptyset$ by \lem{lem:safe_del_exist}, so again $\fC_{\zF_{w}}=\emptyset$, which proves the claim.
	

	Since $\fC_{\zF_{w}}=\emptyset$, it follows that $\bar e$ necessarily belongs to $\fC(V_{\zF_{w}}^c)$.
	Finally, since $\bar e$ is still present in $\tilde{\fC}$, it follows that $\bar e \in \tilde \fC(V_{\tilde \zF_{w}})\cap \tilde \fC(V^c_{\tilde \zF_{w}})$, which proves that $\tilde \fC(V_{\tilde \zF_{w}})\cap \tilde \fC(V^c_{\tilde \zF_{w}}) \neq\emptyset$ and completes the verification of condition~\ref{pt:branches} for $({\tilde{\bzF}},\tilde{\bfC})$.

	To complete the proof in \hyperref[case3]{Case 3} by induction, we remark that
	\begin{equ}[eq:max_increase]
		\max \{a_f \,:\; f\in \tilde\fC(L_{\tilde \zF_{\rho}})\cap \tilde\fC(L_{\tilde \zF_{\bar\rho}})\} = \max \{a_f \,:\; f\in \fC(L_{\zF_{\rho}})\cap \fC(L_{\zF_{\bar\rho}})\}\;,
	\end{equ}
	which means that the upper bound on $\theta$ does not decrease in the new graph.
	To prove~\eqref{eq:max_increase}, if $e$ is a safe deletion, then we have equality in \eqref{eq:max_increase} because the corresponding sets over which the max is taken are the same.
	If, on the other hand, $e$ is not a safe deletion, then recall that there are no safe deletions in $\fC$ and moreover $a_e = \min\{a_f : f \in \fC(L_{\zF_{\rho}})\cap \fC(L_{\zF_{\bar\rho}})\}$, so \eqref{eq:max_increase} follows once we show that $|\fC|\geq 2$.
	To this end, note that the absence of safe deletions implies $\fC_{\zF_{\rho}}\cup \fC_{\zF_{\bar\rho}} = \emptyset$ by Lemma~\ref{lem:safe_del_exist},
	which, combined with the conditions of \hyperref[case3]{Case 3} (recalling that we are not in \hyperref[bcase2]{Base case 2} and $\fC(\roots_{\zF})=\emptyset$) implies $|\fC|\geq 2$ as required.

	Finally, by the aforementioned changes to decorations, $\Lambda( \zF_{\rho}) = \Lambda(\tilde{\zF}_{\!\rho})$, and likewise for $\bar\rho$.
	Consequently, applying the inductive hypothesis to $(\tilde{\bzF},\tilde{\bfC})$ together with \eqref{eq:case3_induct_I} concludes the proof.

	\emph{Proof of \eqref{eq:general-bound_diff}}.
	Suppose first $U=\emptyset$ and denote
	\begin{equ} 
		\tilde\I^{\bzF,\bfC}_{W}(t,s,x-\bar x)\eqdef
		\K^{\bzF,\bfC}_{W;\emptyset}(t,x-\bar x;s) \;,
	\end{equ}
	which, given that $t=\bar t$, is a slight variant of $\I^{\bzF,\bfC}(t,\bar t,x-\bar x)$ except that for all vertices $w\in W \subset C_{\rho}\cup C_{\bar\rho}$ belonging to $ W$, the term $(t-t_w)^{\gamma_{\hat{w},w}} G^{(k_{\hat{w},w})}_{t-t_w}(x_{\hat{w}}-x_w)$ in the expression \eqref{eq:general-bound} is replaced by $(s-t_w)^{\gamma_{\hat{w},w}} G^{(k_{\hat{w},w})}_{s-t_w}(x_{\hat{w}}-x_w)$.
	
	In this case, since $s\leq t\le 2s$, it is straightforward to obtain
	\begin{equ}[eq:general-bound'] 
		\tilde\I^{\bzF,\bfC}_W(t,s,x-\bar x)
		\lesssim t^{\Lambda(\zF)+ \frac{\theta}{2}}|x-\bar x|^{-\theta}
	\end{equ}
	by repeating the argument presented in the first part that shows \eqref{eq:general-bound} (recall $\Lambda(\zF)=\Lambda(\zF_{\rho})+\Lambda(\zF_{\bar\rho})$). Noticing that $|U|=0$, \eqref{eq:general-bound'} agrees with the claimed bound \eqref{eq:general-bound_diff}.
	
	Next, suppose $|U|=1$. We proceed by considering each of the cases discussed in the proof of \eqref{eq:general-bound} except \hyperref[bcase1]{Base case 1} which is not possible.
	We argue either directly, by reduction to $|U|=0$, or by induction on $|E_{\zF}|$.
	We assume without loss of generality that $U=\{\ell\}\subset C_\rho$.
	
	First, in \hyperref[bcase2]{Base case 2}\label{bcase2'}, where now $v=\ell$,
	it follows from the assumption $s\le t\le 2s$ and the estimate \eqref{eq:double-conv-est_diff.} (applied with $\delta=\gamma_f$, $\alpha=\bar\alpha=(a_e-\theta)/2$, $\gamma=a_e$, and $k=j$, $\bar k=\bar j$) 
	that
	\begin{equs}
		{}\big(\big|(t-t_\ell)^{\gamma_f} {G}_{t-t_\ell}^{(j)}-(s-t_\ell)^{\gamma_f}{G}_{s-t_\ell}^{(j)}\big| * \big|\cdot\big|^{-a_e} * \big|\big(r-t_{\bar v})^{\gamma_{\bar{f}}}{G}_{r-t_{\bar v}}^{(\bar j)}\big| \big)(x-\bar x)
		\\\qquad	\lesssim
		|t-s|^{\tipar}(t-t_\ell)^{\gamma_f+\frac{\theta-a_{e}-2|j|}{4}-\tipar}
		(r-t_{\bar v})^{\gamma_{\bar{f}}+\frac{\theta-a_{e}-2|\bar j|}{4}}|x-\bar x|^{-{\theta}}\;,
	\end{equs} 
	where $r\eqdef s\bone_{\bar v\in W}+t\bone_{\bar v\notin W}$ and, as earlier, ${e}$ is the only element of $\fC$, $f,\bar f$ are the only edges and $\ell,\bar v$ are the only leaves of $ \zF_{\rho},{ \zF_{\bar\rho}}$ respectively.
	Multiplying both sides by $g_\ell(t_\ell)g_{\bar v}(t_{\bar v})$,  using the assumption on $\tipar=\kappa$ given in \eqref{eq:tipar-cond} and condition~\ref{pt:edges} (if the corresponding leaf is non-Dirac) to ensure that the singularities are integrable, integrating over time, and considering $s\le t\le 2s$, we obtain the requested bound \eqref{eq:general-bound_diff} for $\K^{\bzF,\bfC}_{W;U}(t,x-\bar x;s)$.
	
	Consider now \hyperref[case1]{Case 1}\label{case1'} with $v=\ell$. We proceed to bound the integral
	\begin{equ}[eq:case1']
		\int_{D} \mrd t_{\ell}\mrd x_{\ell} \big|(t-t_{\ell})^{\gamma_{\rho,\ell}}G^{(k_{\rho,\ell})}_{t-t_\ell}(x_{\rho}-x_\ell)-(s-t_{\ell})^{\gamma_{\rho,\ell}}G^{(k_{\rho,\ell})}_{s-t_\ell}(x_{\rho}-x_\ell)\big| g_\ell(t_{\ell})\;,
	\end{equ} 
	which is the contribution of the variables $x_{\ell},t_{\ell}$ to the integral~\eqref{eq:general-conv_ts}.
	We have
	\begin{equ}
		\eqref{eq:case1'}\lesssim |t-s|^\kappa \int_{0}^{t_{}} g_\ell(t_{\ell})(t_{}-t_{\ell})^{\gamma_{\rho,\ell}-\frac{|k_{\rho,\ell}|}{2}-\kappa} \mrd t_{\ell}
		\propto |t-s|^\tipar t_{}^{1+\beta_{\rho}+\gamma_{\rho,\ell}-\frac{|k_{ \rho,\ell}|}{2}-\tipar}\;,
	\end{equ}
	where, for the spatial integral, we used \eqref{eq:conv-est} with $f\equiv 1$ and $\gamma=\alpha=0$,
	and for the time integral, we used again, if $\ell$ is non-Dirac, that $\gamma_{\rho,\ell}-|k_{\rho,\ell}|/2-\kappa>-1$ due to~\eqref{eq:tipar-cond} and  $\beta_\ell>-1$ by definition.
	One then defines $\tilde{\bzF}$ as in \hyperref[case1]{Case 1}, and the rest of the argument proceeds completely analogously with the difference that the decoration at $\rho$ changes to $\tilde\beta_{\rho}\eqdef 1+\beta_{\rho}+\beta_\ell+\gamma_{\rho,\ell}-|k_{\rho,\ell}|/2-\tipar$. We thus obtain
	\begin{equ}
	\K^{\bzF,\bfC}_{W;U}(t,\bar t, x-\bar x)\lesssim  |t-s|^\tipar \tilde\I^{\tilde{\bzF},\bfC}_W (t,s, x-\bar x)\;,
	\end{equ}
	and apply \eqref{eq:general-bound'} from the case $|U|=0$ to $(\tilde{\bzF},\bfC)$ to conclude the proof in this case.

	Consider next the subcase of \hyperref[case3]{Case 3}\label{case3'} in which the chosen edge $e=\{u,v\}\in\fC$ contains $\ell$.
	Assume without loss of generality that $u=\ell$.
	The inner integral in the definition of $\K^{\bzF,\bfC}_{W;U}$ corresponding to the leaves $\ell,v$ is then equal to
	\begin{equs}[eq:case3-st_integral]
		\int_{0}^{t}\!\!\int_0^{t_{\hat v}}\!\!
		g_\ell(t_\ell) g_{v}(t_v)
		\int_{(\T^n)^{\{\ell,v\}}} 
		\big|\big((t-t_\ell)^{\gamma_{\rho,\ell}}G^{(k_{\rho,\ell})}_{t-t_\ell}-(s-t_\ell))^{\gamma_{\rho,\ell}}
		G^{(k_{\rho,\ell})}_{s-t_\ell}\big)(x-x_\ell)\big|\\
		\big|G^{(k_{\hat v,v})}_{t_{\hat v}-t_v}(x_{\hat v}-x_v)\big|
		\big|x_\ell-x_v\big|^{-a_e}	\, \mrd x_v \mrd x_\ell \mrd t_v \mrd t_\ell\;.
	\end{equs}
	Using the assumption $s \le t\le 2s$ and the estimate \eqref{eq:double-conv-est_diff.} applied with $\delta=\gamma_{\rho,\ell}$, $\gamma = 2\alpha= 2\bar\alpha= a_e$, and $k=k_{\rho,\ell}, \bar k=k_{\hat v,v}$, 
	the spatial integral is bounded above by a multiple of
	\begin{equ}[eq:case3_inner_est]
		|t-s|^{\tipar} (t-t_\ell)^{-\frac{1}{4}a_e - \frac{1}{2}|k_{\rho,\ell}|-{\tipar}} (t_{\hat v}-t_v)^{-\frac14 a_e - \frac12|k_{\hat v,v}|}\;.
	\end{equ}
	Now, let $({\tilde{\bzF}},\tilde{\bfC})$ be the decorated contracted forest obtained after removing the leaves $\ell,v$, the edges $(\rho,\ell),(\hat v,v)$, and the contracting edge $\{\ell,v\}$, and making the following changes to decorations.
	If $\hat v\neq \rho$,
	\begin{equs}[eq:case3_decor']
		\tilde\beta_{\rho}&\eqdef 1+\beta_\rho+\beta_\ell + \gamma_{\rho,\ell}- |k_{\rho,\ell}|/2 -a_e/4 -\tipar\;,\qquad
		\\
		\tilde\beta_{\hat v}&\eqdef 1+\beta_{\hat v}+\beta_v +\gamma_{\hat v,v} -|k_{\hat v,v}|/2-a_e/4\;,\qquad 
	\end{equs}
	and otherwise,
	\begin{equ}[eq:case3_decor'.]
		\tilde\beta_{\rho}\eqdef 2+\beta_{\rho}+\beta_{\ell}+\beta_v-a_e/2 - |k_{\rho ,\ell}|/2+\gamma_{\rho,\ell} - |k_{\rho,v}|/2+\gamma_{\rho,v}-\tipar\;.\quad
	\end{equ}
	(All other  assignments remain the same.)
	Then, estimating \eqref{eq:case3-st_integral} using \eqref{eq:case3_inner_est} and taking the integrals over time
	(we remark that if $\ell$ and/or $v$ are non-Dirac, then correspondingly by \eqref{eq:tipar-cond} for $\ell$, by \eqref{eq:edges} for $v$, and that $\beta>-1$, 
	the singularities in time are integrable),
	we obtain
	\begin{equ}[eq:case3'_induct]
		\K^{\bzF,\bfC}_{W;U}(t,x-\bar x;s) \lesssim
		 |t-s|^{\tipar} \tilde{\I}^{\tilde{\bzF},\tilde{\bfC}}_{\tilde{W}}(t,s,x-\bar x)\;,
	\end{equ}
	where $\tilde{W}\eqdef W\setminus\{\ell\}$.

	By the argument in \hyperref[case3]{Case 3}, $(\tilde{\bzF},\tilde{\bfC})$ satisfies the conditions~\ref{pt:edges}-\ref{pt:contractions} since $(\bzF,\bfC)$ does.
	Therefore, \eqref{eq:general-bound_diff} in this case follows from \eqref{eq:general-bound'} for the case $|U|=0$ applied to $(\tilde{\bzF},\tilde{\bfC})$,
	the bound \eqref{eq:case3'_induct}, and the fact that $\Lambda(\tilde \zF_{\rho})+\Lambda(\tilde \zF_{\bar\rho})=\Lambda( \zF_{\rho})+\Lambda({ \zF_{\bar\rho}})-\tipar$. (We remark that the upper-bound on $\theta$ does not decrease as argued in \hyperref[case3]{Case 3}.)

	For the remaining cases we prove the claim by induction on the number of edges.
	The two base cases are (i) $ \zF_{\rho}$ and ${ \zF_{\bar\rho}}$ are $I^{(j)}(\ell)$ and $(\{\bar\rho\},\emptyset)$ respectively,
	which follows from \hyperref[case1']{Case 1} above,
	and (ii) \hyperref[bcase2']{Base case 2}, which we already considered above.

	For the inductive step, consider first the remaining subcase of \hyperref[case3]{Case 3}\label{case3''}
	in which the chosen edge $e=\{u,v\}\in\fC$ does not contain $\ell$.
	In this case, defining $(\tilde{\zF},\tilde{\fC})$  and $(\tilde{\bzF},\tilde{\bfC})$ and arguing as in \hyperref[case3]{Case 3}, we arrive at
	\begin{equ}
		\K^{\bzF,\bfC}_{W;U}(t,x-\bar x;s) \lesssim
		 \K^{\tilde{\bzF}, \tilde{\bfC}}_{W;U}(t,x-\bar x;s)\;.
	\end{equ}
	Consequently, \eqref{eq:general-bound_diff} for $(\bzF,\bfC)$ in this case follows from the inductive hypothesis.

	In \hyperref[case2]{Case 2}, where necessarily $(u,v)\neq (\rho,\ell)$, the conclusion follows in exactly the same way as in \hyperref[case2]{Case 2} for \eqref{eq:general-bound}. In fact, the desired estimate can be directly derived by rewriting the argument in \hyperref[case2]{Case 2} (with $\I$ replaced by $\K$ in \eqref{eq:case2-induct_I} accordingly) followed by the application of our inductive hypothesis here.
	
	Finally, the argument for \hyperref[case1]{Case 1} with $(u,v)\neq (\rho,\ell)$ also proceeds analogously to that of \hyperref[case1]{Case 1} for \eqref{eq:general-bound}.
	This completes the proof of \eqref{eq:general-bound_diff} in the case that $|U|=1$.

	To conclude the proof of \eqref{eq:general-bound_diff}, it remains to consider $U=\{\ell,\bar\ell\}$ with $\ell\in C_\rho$ and $\bar\ell\in C_{\bar\rho}$.
	The bound in this case is deduced by almost the same considerations as in the case $|U|=1$, so we only highlight the differences.

	For the base case, where $ \zF_{\rho}$ and ${\zF_{\bar\rho}}$ are respectively $I^{(j)}(\ell)$
	and $I^{(\bar j)}(\bar \ell)$,
	there are two possibilities; either $\fC=\emptyset$ or $\fC\neq\emptyset$.
	For $\fC=\emptyset$,
	applying \eqref{eq:double-conv-est_diff} (with $f\equiv 1$, $\delta=\gamma_f$, $\bar\delta=\gamma_{\bar{f}}$, $\alpha=\bar \alpha=\gamma=0$, $k=j$, $\bar k=\bar j$),
	and recalling $\zeta = \kappa + \bar\kappa$,
	we have
	\begin{equs} 
		\K^{\bzF,\bfC}_{W;U}(t, x-\bar x;s) & =\! 
		\int_{D}\!\mrd r \mrd y\, t^{\beta_{\rho}} \big|\big((t-r)^{\gamma_f}{G}_{t-r}^{(j)}\!\!-(s-r)^{\gamma_f}{G}_{s-r}^{(j)}\big)(x-y)\big|g_{\ell}(r)
		\\& \quad\times \!\!\int_{D}\! \mrd r \mrd y\, t^{\beta_{\bar\rho}} \big|\big((t-r)^{\gamma_{\bar{f}}}{G}_{t-r}^{(\bar j)}\!\!-(s-r)^{\gamma_{\bar f}}{G}_{s-r}^{(\bar j)}\big|\big)(\bar x-y)\big|g_{\bar \ell}(r)\\
		& \lesssim
		|t-s|^\tipar t^{\beta_{\rho}+\beta_{\bar\rho}+2+\beta_v+\beta_{\bar v}+\gamma_f+\gamma_{\bar f}-\frac{|j|+|\bar j|}{2}-\tipar}\;,
	\end{equs}
	as desired.
	For $\fC\neq \emptyset$, we apply \eqref{eq:double-conv-est_diff}
	with $\delta=\gamma_f$, $\bar\delta=\gamma_{\bar{f}}$, $\alpha=\bar\alpha=(a_e-\theta)/2$,  $\gamma=a_e$,  $k=j$, $\bar k=\bar j$
	to obtain, for $z\eqdef x-\bar x$,
	\begin{equs}
		{}
		\big(\big|(t\!-t_\ell)^{\gamma_f}{G}_{t-t_\ell}^{(j)}\!\!-(s\!-t_\ell)^{\gamma_f}{G}_{s-t_\ell}^{(j)}\big|\!*\!  \big|\! \cdot \!\big|^{-a_e}\!\!  *\!\big|(t-t_{\bar \ell})^{\gamma_{\bar{f}}}{G}_{t\!-t_{\bar \ell}}^{(\bar j)}\!\!-(s\!-t_{\bar \ell})^{\gamma_{\bar f}}{G}_{s-t_{\bar \ell}}^{(\bar j)}\big|\big)(z) \\
		 \lesssim |t-s|^{\tipar}
		(t-t_{\ell})^{\gamma_f-\frac{j}{2}+\frac{\theta-a_{e}}{4}-\kappa}
		(t-t_{\bar \ell})^{\gamma_{\bar{f}}-\frac{\bar j}{2}+\frac{\theta-a_{e}}{4}-\bar\kappa}|z|^{-\theta}\,.
	\end{equs}
	The conclusion follows by multiplying this bound by $g_\ell(t_\ell) g_{\bar \ell}(t_{\bar \ell})$ and integrating in $t_\ell,t_{\bar \ell}$.
	(Note that, when either of the $\ell,\bar \ell$ is non-Dirac, due to \eqref{eq:tipar-cond} and $\beta>-1$, the singularities in the corresponding time integrals in both aforementioned cases are integrable. We also remark that we used $s\leq t\leq 2s$ in both inequalities above.)
	
	Next, for $|E_{\zF}|>2$, recall that we argue by removing an edge (possibly a contracting edge) with endpoints $u,v$. 
	We now consider three possible situations: 
    1) $u,v\in U$, 2) $|\{u,v\}\cap U|=1$, and 3) $u,v\notin U$.
	For the first situation, which only arises in \hyperref[case3]{Case 3}, 
	the argument is completely analogous to \hyperref[case3']{Case 3} for $|U|=1$, with the only difference being that one applies  \eqref{eq:double-conv-est_diff} (with the same parameters as in the corresponding case, except that we use both $\kappa,\bar\kappa$ therein) when deleting the contracting edge and its neighbouring edges,
	which implies
	that the spatial integral corresponding to the set of vertices $\{\ell,\bar\ell\}=U$ is bounded above by a multiple of
	\begin{equ}
		|t-s|^{\tipar} (t-t_\ell)^{-\frac{1}{4}a_e - \frac{1}{2}|k_{\rho,\ell}|-\kappa} (t-t_{\bar\ell})^{-\frac14 a_e - \frac12|k_{\bar\rho ,\bar\ell}|-\bar\kappa}
	\end{equ}
	instead of \eqref{eq:case3_inner_est}, and accordingly, the assignments 
	\begin{equs}
		\tilde\beta_{\rho}&\eqdef 1+ \beta_\rho + \beta_\ell + \gamma_{\rho,\ell} - |k_{\rho,\ell}|/2 -a_e/4 -\kappa\;, \qquad 	
		\\
		\qquad\tilde\beta_{\bar\rho}&\eqdef 1+\beta_{\bar\rho} + \beta_{\bar\ell}+ \gamma_{\bar\rho,\bar\ell} - |k_{\bar\rho,\bar\ell}|/2 - a_e/4 -\bar\kappa \;\qquad
	\end{equs}	
	replace those of \eqref{eq:case3_decor'} (and \eqref{eq:case3_decor'.}).
	Analogously to \hyperref[case3']{Case 3} for $|U|=1$, we thus reduce the proof of \eqref{eq:general-bound_diff} to the case $|U|=0$, which we already considered.
	
	Consider now the second situation, which can only occur in \hyperref[case1]{Case 1} (when, say, $u=\rho\neq\bar\ell, v=\ell$)
	and \hyperref[case3]{Case 3} (when, say, $u=\ell, v\neq\bar\ell$). The argument for \hyperref[case3']{Case 3} is the same as what we used to arrive at \eqref{eq:case3'_induct}
	except that we apply \eqref{eq:double-conv-est_diff.}
	to arrive at
	\begin{equ}
		\K^{\bzF,\bfC}_{W;U}(t,x-\bar x;s) \lesssim
		|t-s|^{\kappa} \K^{\tilde{\bzF},\tilde{\bfC}}_{\tilde{W};\tilde{U}}(t,x-\bar x;s)\;,
	\end{equ}
	where $\tilde W \eqdef W\setminus \{\ell\}$ as earlier and $\tilde{U} \eqdef \{\bar \ell\}$.
	Correspondingly, in the change of decorations \eqref{eq:case3_decor'}, we replace $\tipar$ by $\tipar-\bar\kappa=\kappa$.
	We conclude by applying the bound \eqref{eq:general-bound_diff} in the case $|U|=1$ and with $\tipar$ therein replaced by $\bar\kappa$, which we already proved.
	For \hyperref[case1']{Case 1} with $(u, v)=(\rho,\ell)$, the argument is analogous to before 
	except that we reduce the bound to an application of \eqref{eq:general-bound_diff} (with $U=\{\bar\ell\}$) instead of \eqref{eq:general-bound'}.
	
	Lastly, in the third situation (i.e. $u,v\notin U$,
	which can occur in any of the Cases~1-3),
	the conclusion follows by induction on the number of edges as earlier.
\end{proof}

\subsection{Convergence of mollifications}\label{subsec:convergence_mollifiers}

In this subsection, we apply Theorem \ref{thm:general-bounds} to prove Theorem \ref{thm:moment-estimates}.

\begin{definition}\label{def:saturation-safe} 
	Let $(\zF, \fC)$ be a contracted forest (see Definition \ref{def:CT}).
	\begin{itemize}
	\item If $|\fC|=|L_{\zF}|/2$ (i.e. $\fC$ is a pairing of the leaves), then $\fC$ is called a \emph{complete contraction}.
	For instance, Figure \ref{fig:graphs}(b) depicts a complete contraction.

	\item 
	A \emph{saturation} of $(\zF, \fC)$ is a contracted forest $(\sigma,\fC_\sigma)$ in which $\sigma$ is a branch of $\zF$, $\fC_{\sigma}$ is the induced contraction on $\sigma$ as in Definition \ref{def:CT}, and $\fC_{\sigma}$ (as a contraction of $\sigma$) is complete.

	\item 
	We call a contracted forest \emph{safe} if it has no saturations. For instance,
	Figures~\ref{fig:graphs}(a) and \ref{fig:unsafe-deletion} 
	depict safe contracted forests. 
	\end{itemize}
\end{definition} 

For the proof of \theo{thm:moment-estimates}, we recall the Isserlis--Wick theorem (see \cite{janson1997gaussian}, Thm. 1.28):
if $ \xi_1,\dots,\xi_m$ are centred jointly normal random variables,
then
\begin{equ}[eq:Wick]
	\E(\xi_1\cdots \xi_m) = \sum\prod_{k}\E(\xi_{i_k}\xi_{j_k})\,,
\end{equ}
where the sum is over all partitions of $ \{1,\dots, m\} $ into disjoint pairs $ \{i_k,j_k\}$.

We furthermore use the following elementary lemma in the proof of \theo{thm:moment-estimates}.

\begin{lemma}\label{lem:oddness-of-leaves-plus-derivatives}
	Consider $\tau\in\CT$. Let
	$\mcb{k}(\tau)$ be the number of edges in $E_\tau$ labelled with $I'$, and denote $\mcb{l}(\tau)\eqdef|L_\tau|$. Then $\mcb{l}(\tau)+\mcb{k}(\tau)$ is an odd number.
\end{lemma}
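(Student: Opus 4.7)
The plan is to argue by induction on the construction of $\tau \in \CT$ given in Definition~\ref{def:CT_trees}. Writing $f(\tau) \eqdef \mcb{l}(\tau) + \mcb{k}(\tau)$, the claim is that $f(\tau)$ is odd for every $\tau \in \CT$.

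For the base case $\tau = \<Xi>$, the tree has a single vertex (which is a leaf by Definition~\ref{def:trees}) and no edges, so $\mcb{l}(\<Xi>) = 1$ and $\mcb{k}(\<Xi>) = 0$, giving $f(\<Xi>) = 1$, which is odd.

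For the inductive step, suppose $f(\tau_i)$ is odd for all $\tau_i \in \CT$ appearing in the construction.
\begin{itemize}
\item If $\tau = I(\tau_1) I(\tau_2) I(\tau_3)$, then $\tau$ is obtained by adjoining three $I$-labelled edges to a common new root. The set of leaves of $\tau$ is the disjoint union of the leaf sets of $\tau_1,\tau_2,\tau_3$ (the new root is not a leaf since it has degree $3$), and the set of $I'$-labelled edges is unchanged, so $\mcb{l}(\tau) = \sum_{i=1}^3 \mcb{l}(\tau_i)$ and $\mcb{k}(\tau) = \sum_{i=1}^3 \mcb{k}(\tau_i)$. Hence $f(\tau) = \sum_{i=1}^3 f(\tau_i)$, which is a sum of three odd numbers and is therefore odd.
\item If $\tau = I(\tau_1) I'(\tau_2)$, then one new $I$-edge and one new $I'$-edge are added at a new root. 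As above, $\mcb{l}(\tau) = \mcb{l}(\tau_1) + \mcb{l}(\tau_2)$, while $\mcb{k}(\tau) = \mcb{k}(\tau_1) + \mcb{k}(\tau_2) + 1$. Thus $f(\tau) = f(\tau_1) + f(\tau_2) + 1$, which is odd + odd + 1, hence odd.
\end{itemize}

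Since every element of $\CT$ arises from finitely many applications of these constructions starting from $\<Xi>$, the induction yields the claim. There is no real obstacle here; the only point to be slightly careful about is that in both inductive constructions the merged root is a non-leaf vertex (its degree is $3$ or $2$, respectively) and hence does not contribute to $\mcb{l}(\tau)$, which is what guarantees the clean additivity of $\mcb{l}$ and $\mcb{k}$ used above.
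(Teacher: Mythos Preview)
Your proof is correct and essentially identical to the paper's: both argue by induction via the recursive construction of $\CT$, deriving the same additive relations $f(\tau)=\sum_i f(\tau_i)$ and $f(\tau)=f(\tau_1)+f(\tau_2)+1$ in the two cases. The only cosmetic difference is that the paper phrases the induction parameter as $\mcb{l}(\tau)$ rather than as structural induction on the tree, but the content is the same.
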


\begin{proof}
	We proceed by induction on $\mcb{l}(\tau)$. The base case, $\mcb{l}(\tau)=1$ where $\tau=\<Xi>$, is trivial.
	
	Suppose that the claim holds for any $\sigma$ with $\mcb{l}(\sigma)\in [m]$ and $m\ge 1$, and consider any $\tau$ with $\mcb{l}(\tau)= m+1$. 
	We clearly have
	\begin{equ}
	\mcb{k}(\tau)+\mcb{l}(\tau)=
	\begin{cases}
		1+\sum_{i=1}^2\!\big\{\mcb{k}(\tau_i)+\mcb{l}(\tau_i)\big\} & \text{if } \tau=I(\tau_1)I'(\tau_2) \text{ with } \tau_i\in\CT^{m}_{\<Xi>},
		\\
		\sum_{i=1}^3\!\big\{\mcb{k}(\tau_i)+\mcb{l}(\tau_i)\big\} & \text{else if } \tau= \prod_{i=1}^{3}\!I(\tau_i) \text{ with } \tau_i\in\CT^{m}_{\<Xi>}.
	\end{cases}
	\end{equ}
	Therefore, the oddness of $\mcb{k}(\tau)+\mcb{l}(\tau)$ in both cases follows immediately from the inductive hypothesis. 
\end{proof}

\begin{proof}[of \theo{thm:moment-estimates}]
	By Gaussian hypercontractivity (i.e. the equivalence of moments in a fixed Wiener chaos), we only need to prove~\eqref{eq:Z_eps_diff}-\eqref{eq:Z_time_diff} for $p=2$.
			
	\textit{Proof of~\eqref{eq:Z_eps_diff}}. By telescoping as in the proof of Lemma \ref{lem:covar_X^e-X}, it suffices to show the bound in the case of $\eps \leq 2\bar\e$.
	Consider a nested family $\{L_{i}\}_{i=0}^{\noise{\tau}}$ of subsets of $L_\tau$ with $|L_{i}|=i$. Let $X^{\tau,\e}_{t;i}$ be defined similar to $X^{\tau,\bar\e}_t$ with the only difference being that in the construction of $X^{\tau,\e}_{t;i}$, if a leaf of $\tau$ belongs to $L_{i}$, the corresponding $X^{\bar\e}$ is replaced by $X^\e$. 		
	We write, by an obvious telescoping,
	\begin{equs}
		X^{\tau,\e}_{t}-X_t^{\tau,\bar\e} = \sum_{i=1}^{\noise{\tau}}\{X^{\tau,\e}_{t;i}-X^{\tau,\e}_{t;i-1}\}\;,
	\end{equs}
	and then, by the triangle inequality, 
	\begin{equ}
		\|\scal{X^{\tau,\e}_{t}-X_t^{\tau,\bar\e},\phi_z^\lambda}\|_{L^2} \leq \sum_{i} 	\|\scal{X^{\tau,\e}_{t;i}-X^{\tau,\e}_{t;i-1},\phi_z^\lambda}\|_{L^2}\;.
	\end{equ}
	Hence, it suffices to show, for each $X^{\tau,\e}_{t;i}-X^{\tau,\e}_{t;i-1}$, the covariance bound 
	\begin{equ}
		|\E \big(X^{\tau,\e}_{t;i}-X^{\tau,\e}_{t;i-1}\big)(x)\otimes \big(X^{\tau,\e}_{t;i}-X^{\tau,\e}_{t;i-1}\big)(\bar x) | \lesssim
		t^{-2\delta} |x-\bar x|^{2\beta}|\e-\bar\e|^{\kappa\wedge 1}\;.
	\end{equ}
	Denote $Y^\tau_{t}(x)\eqdef (X^{\tau,\e}_{t;i}-X^{\tau,\e}_{t;i-1})(x)$ for any $(t,x)\in D = (0,1)\times\T^n$, whose construction is similar to 
	$X^\tau_t(x)$ with the difference that in $Y^\tau$ one of the leaves of $\tau$ represents $X^\eps-X^{\bar\eps}$ and the others represent 
	$X^\e$ or $X^{\bar\e}$. 
	We can write $Y^\tau_t(x)$ as a linear combination of a finite number of terms with coefficients of the form
	\begin{equ}[eq:Y^tau]
		Y^{\tau,h,k}_t(x) = \int_{D^{V^{*}_{\tau}}}
		\prod_{v\in  L_{\tau}}  Y^{h_v}(x_{v})\delta_0(t_{v})
		\prod_{u\in  V^{*}_{\tau}} 	\!\!G^{(k_{\hat{u},u})}_{t_{\hat{u}}-t_{u}}(x_{\hat{u}}-x_{u})\, \mrd x_u \mrd t_u\,,
	\end{equ} 
	where $h \in (E^{*})^{L_\tau}$ and $Y^{h_v}(\cdot) \eqdef h_v(Y^v(\cdot))$ with $Y^v$ being the function represented by the leaf $v$
	(either $X^\eps$, $X^{\bar\eps}$, or $X^{\eps}-X^{\bar\eps}$),
	and we denote $(t_\rho,x_\rho) = (t,x)$ for $\rho$ the root of $\tau$. 
	Above, we further have $k\in (\N^n_0)^{E_\tau}$ with $|k_{\hat u,u}| = 1$ if $(\hat{u},u)$ carries the label $I'$ and with
	$ k_{\hat u,u} =0$ if $(\hat{u},u)$ carries the label $I$. 

	Consider now the forest $\zF=\{\tau,\bar\tau\}$, where $\bar\tau$ is a rooted tree isomorphic to $\tau$, by which we mean that there is a root- and label-preserving graph isomorphism between the two trees (we also regard functions $Y^v\in \{X^\e, X^{\bar\e}, X^\e - X^{\bar\e}\}$ as labels for the leaves here).
	Denote by $\bar\rho$ the root of $\bar\tau$ and let $t_{\bar\rho} = t$ and $x_{\bar\rho} = \bar x$.
	In view of \eqref{eq:Y^tau} and by Fubini's theorem and \eqref{eq:Wick} (note that since $X$ is a centred Gaussian field by assumption, $\{Y^{h_v}(x_v)\}_{v\in L_{\tau}}$ are centred jointly normal random variables), we can then write
	\begin{equs}[eq:covar-Y^tau-Wick]
		\E\big[Y_{t}^{\tau,h,k}(x) Y_{t}^{\bar\tau,h,k}(\bar x)\big]= 
		\sum_{\mfC} \int_{D^{V^{*}_{\zF}}}\!
		\prod_{\{u,v\}\in \fC} \!\!\! \E\big[Y^{h_u}(x_{u})Y^{h_v}(x_{v})\big]\delta_0(t_{u})\delta_0(t_{v})\\
		\prod_{w\in  V^{*}_{\zF}} 	\!G^{(k_{\hat{w},w})}_{t_{\hat{w}}-t_{w}}(x_{\hat{w}}-x_{w}) \,\mrd x_w \mrd t_w\,,
	\end{equs}
	where the sum runs over all partitions $\fC$ of $ L_{\zF}$ into disjoint pairs and, in line with our earlier notations, we write $V_{\zF}$ for $V_\tau\cup V_{\bar\tau}$, $L_{\zF}$ for $L_\tau\cup L_{\bar\tau}$, and $V^{*}_{\zF}$ for 
	$V_{\zF}\setminus\{\rho,\bar\rho\}$.
	The linear functions $h \in (E^{*})^{L_{\bar\tau}}$ and multi-indexes $k\in(\N^n_0)^{E_{\bar\tau}}$ for $\bar \tau$ are canonically determined by those for $\tau$ and the isomorphism between $\tau$ and $\bar\tau$.
	It thus remains to control the summands appearing on the right-hand side of \eqref{eq:covar-Y^tau-Wick}, each of which corresponds to
	a contracted forest $(\zF,\fC)$ with $\fC$ being a complete contraction of $\zF$ (recall Definitions \ref{def:CT} and \ref{def:saturation-safe}).

	Suppose first $\fC$ is any contraction of $\zF$ such that the contracted forest $(\zF,\fC)$ is not safe. Let then $(\sigma,\fC_{\sigma})$ be a saturation of $(\zF,\fC)$
	and consider 
	\begin{equs}[eq:I^sig]
		I^{\sigma,\fC_\sigma}(t_{\rho_\sigma},x_{\rho_\sigma}) \eqdef \int_{D^{V^{*}_{\sigma}}}
		\prod_{\{u,v\}\in \fC_{\sigma}}\!\!\E\big[Y^{h_u}(x_{u})Y^{h_v}(x_{v})\big]\delta_0(t_{u})\delta_0(t_{v})\\
		\prod_{w\in  V^{*}_{\sigma}} 	\!\!G^{(k_{\hat{w},w})}_{t_{\hat{w}}-t_{w}}(x_{\hat{w}}-x_{w})\, \mrd x_w \mrd t_w\,,
	\end{equs} 
	which is exactly the factor associated with $(\sigma,\fC_\sigma)$ that appears in those terms on the right-hand side of \eqref{eq:covar-Y^tau-Wick} in which the leaves of $\sigma$ are paired according to $\fC_\sigma$. ($\rho_\sigma$ denotes, as usual, the root of $\sigma$.)
	
	Since the covariance function of $X$ and the mollifier $\chi$ are even functions by assumption, 
	we have $\E\big[Y^{h_u}(x_{u})Y^{h_v}(x_{v})\big] = \E\big[Y^{h_u}(-x_{u})Y^{h_v}(-x_{v})\big]$. 
	Furthermore, by stationarity of $X$,
	$I^{\sigma,\fC_\sigma}(t_{\rho_\sigma},x_{\rho_\sigma})=I^{\sigma,\fC_\sigma}(t_{\rho_\sigma},0)$.
	We also note that each $G_s(\cdot)$ is an even function (in space)
	and that the number of $k_e$ in \eqref{eq:I^sig} with $|k_e|=1$ is an odd number—the latter being implied by Lemma~\ref{lem:oddness-of-leaves-plus-derivatives}, given that $\sigma$ has an even number of leaves. 
	It follows that the integrand defining $I^{\sigma,\fC_\sigma}(t_{\rho_\sigma},0)$ as in \eqref{eq:I^sig} is an odd function in the integration variable $(x_v)_{v\in V^*_\sigma}$,
	and therefore $I^{\sigma,\fC_\sigma}(t_{\rho_\sigma},x_{\rho_v})=I^{\sigma,\fC_\sigma}(t_{\rho_\sigma},0)=0$. 
	This implies that the summands in \eqref{eq:covar-Y^tau-Wick} corresponding to non-safe contractions vanish.

	Fix then a complete contraction $\fC$ such that $(\zF,\fC)$ is safe.
	Since we have only for one of the leaves of $\tau$ the occurrence of $X^\eps-X^{\bar\eps}$ and for others $X^\eps$ or $X^{\bar\eps}$, any complete contraction includes either one edge $\{u,\bar u\}$ connecting the two leaves associated with $X^\e-X^{\bar\e}$, or two contracting edges $f=\{u,v\}$ and $\bar f=\{\bar u,w\}$ incident to these two leaves.
	Define, in the former case, 
	\begin{gather*}
		K_{\{u,\bar u\}}(x_u-x_{\bar u})\eqdef 
		\E\big[\big(X^{\e,h_{u}}(x_u)-X^{\bar\e,h_{u}}(x_u)\big)\big(X^{\e,h_{\bar u}}(x_{\bar u})-X^{\bar\e,h_{\bar u}}(x_{\bar u})\big)\big]\;,
		\\
		K_{\{\ell,\bar \ell\}}(x_\ell-x_{\bar \ell})\eqdef \E\big[X^{\e_\ell,h_{\ell}}(x_{\ell})X^{\e_{\bar\ell},h_{\bar\ell}}(x_{\bar \ell})\big]\;,
		\;\qquad \forall \{\ell, \bar\ell\}\in \fC\setminus\{\{u,\bar u\}\}\;,
\end{gather*}
	and, in the latter case,
	\begin{gather*}
		K_{\{u,v\}}(x_u-x_{v})\eqdef \E\big[\big(X^{\e,h_{u}}(x_u)-X^{\bar\e,h_{u}}(x_u)\big)X^{\e_v,h_{v}}(x_{v})\big]\;,
		\\
		K_{\{\bar u,w\}}(x_{\bar u}-x_{w})\eqdef \E\big[\big(X^{\e,h_{\bar u}}(x_{\bar u})-X^{\bar\e,h_{\bar u}}(x_{\bar u})\big)X^{\e_w,h_{w}}(x_{w})\big]\;,	
		\\
		K_{\{\ell,\bar \ell\}}(x_\ell-x_{\bar \ell})\eqdef \E\big[X^{\e_\ell,h_{\ell}}(x_{\ell})X^{\e_{\bar\ell},h_{\bar\ell}}(x_{\bar \ell})\big]\;,
		\qquad \forall \{\ell, \bar\ell\}\in \fC(L_\zF\setminus\{u,\bar u, v, w\})\;,
	\end{gather*}
	where $\e_{\bcdot}\in\{\e,\bar\e\}$ and $X^{\e_{\bcdot},h_{\bcdot}}(\cdot)\eqdef h_{\bcdot}(X^{\e_{\bcdot}}(\cdot))$.
	It then follows from \eqref{eq:cov_Psi-Psi^e} (which uses $\eps\leq 2\bar\eps$ and \assu{ass:GF} with $|k|\leq 1$),
	that in the former case above,
	\begin{equ}
		|K_{\{u,\bar u\}}(\cdot)|\lesssim|\!\cdot\!|^{2-d-(\kappa\wedge 1)}
		|\e-\bar\e|^{\kappa\wedge 1}\;,
	\end{equ}
	and in the latter case, 
	\begin{equ}
		|K_{f}(\cdot)|\lesssim|\!\cdot\!|^{2-d-\frac{\kappa\wedge 1}{2}}|\e-\bar\e|^{\frac{\kappa\wedge 1}{2}}
		\;,
		\qquad |K_{\bar f}(\cdot)|\lesssim|\!\cdot\!|^{2-d-\frac{\kappa\wedge 1}{2}}|\e-\bar\e|^{\frac{\kappa\wedge 1}{2}}\;.
	\end{equ} 
	In both cases, on the other hand,
 	$|K_e(\cdot)|\lesssim|\!\cdot\!|^{2-d}$ 
	for the other edges in the contraction. 
	We therefore have,
	for each edge $e \in\fC$,
	$|K_{e}(\cdot)|\lesssim|\!\cdot\!|^{a_e}|\e-\bar\e|^{2-d-a_e}$, where  $a_e\in\{d-2, d-2+(\kappa\wedge 1), d-2+(\kappa\wedge 1)/2\}$ is identified by the aforementioned bound for the kernel $K_e$.

	Consider now the decorated graph $(\bzF,\bfC)$ consisting of $(\zF, \fC)$ and the following data:
	\begin{equ}
	\beta_\ell=-1\;,\quad \forall \ell\in L_{\zF}\;, \qquad
	\beta_u=0\;,\quad \forall u\in I_{\zF}\;, \qquad
	\gamma_e = 0 \;,\quad \forall e\in E_{\zF}\;,
	\end{equ}
    $k_{\hat w, w}\in\N^n_0$ for each edge $(\hat w,w)$ is as in a summand in \eqref{eq:covar-Y^tau-Wick}, and
	$\{a_e\}_{e\in \fC}$ is as specified above.
	Then $(\bzF,\bfC)$ is a decorated contracted forest in the sense of \defi{def:DCF-I-K}\ref{pt:DCF}.
	Furthermore, using the definitions of $|\<Xi>|$ and $\Lambda(\<Xi>)$ and the recursive formulas \eqref{eq:homogeneity} and \eqref{eq:Lambda_induct}, we have, for any branch $\sigma$ of $\zF$,
	\begin{equ}[eq:Lambda_T'^eps]
		2\Lambda(\sigma) - |\sigma|\in\{0,-(\kappa\wedge 1)/4,-(\kappa\wedge 1)/2\}\;
	\end{equ}
	and, in particular,
	\begin{equ}
	\Lambda(\bar\tau)=\Lambda(\tau)=|\tau|/2 -(\kappa\wedge 1)/4\;.
	\end{equ}
	Denote by $C^{\tau,\fC}(t, x-\bar x)$ the corresponding summand in \eqref{eq:covar-Y^tau-Wick}.
	Recalling \eqref{eq:general-conv}, it follows from the bounds above on $K_e$ that
	\begin{equ}[eq:C_first_bound]
		C^{\tau,\fC}(t, x-\bar x)\lesssim |\e-\bar \e|^{\kappa\wedge 1}\I^{\bzF,\bfC}(t,t, x-\bar x)\;.
	\end{equ}
	Therefore, we obtain from \eqref{eq:general-bound} (with $\theta =-2\beta \in [0,d-2]$ therein) that
	\begin{equ}[eq:I^YM_P-est]
		C^{\tau, \fC}(t, x-\bar x) \lesssim
		t^{|\tau|-(\kappa\wedge 1)/2 - \beta}
		|x-\bar x|^{2\beta}|\e-\bar\e|^{\kappa\wedge 1}
		\leq
		t^{-2\delta}|x-\bar x|^{2\beta}|\e-\bar\e|^{\kappa\wedge 1}
	\end{equ}
	once we verify conditions~\ref{pt:edges}-\ref{pt:contractions} for $(\bzF,\bfC)$, which we do next.
	\begin{enumerate}[label=\arabic*.,leftmargin=13pt]
	\item
	\ref{pt:edges} holds since, for a non-root vertex $v$,
	\eqref{eq:edges} is equivalent to $-(d-2+\tilde{\kappa})/4- |k_{\hat v,v}|/2>-1$, where $\tilde{\kappa}\in\{0,(\kappa\wedge 1)/2,\kappa\wedge 1\}$ depending on the position of the leaf labelled $X^{\e}-X^{\bar\e}$, which holds because $d<4$, $|k_{\hat v,v}|\leq 1$, and $\kappa\in[0,4-d)$.

	\item
	\ref{pt:branches} holds
	since, for $\<Xi>\neq\sigma\in\CT$,
	\begin{equs}
		\Lambda(\sigma)+\min_{e\in \fC(V_{\sigma})}\!\! a_e/4 &
		\ge \noise{\sigma}(2-d)/4+\noise{\sigma}/2-3/2-(\kappa\wedge 1)/4 +(d-2)/4\\&
		\ge \dd-1/2-(\kappa\wedge 1)/4+(d-2)/4 \\&
		\ge -d/4-\kappa/4 > -1\,,
	\end{equs}
 	where we used $\min_{e\in \fC(V_{\sigma})} a_e=d-2$, \eqref{eq:Lambda_T'^eps}, and that $|\sigma|=\noise{\sigma}(2-d)/2+\noise{\sigma}-3$ by \rem{rem:homogeneity} in the first line, the fact that $|\!\cdot\!|$ is increasing in $\noise{\cdot}$ (since $d<4$) and that $\noise{\sigma}\geq 2$ in the second line, and the assumption $\kappa<4-d$ in the end.

	\item
	\ref{pt:contractions} holds because $n\ge 2$, $d\in(2,4)$, and $\kappa\in[0,4-d)$, which imply that $d-2$ and $d-2+\kappa$ are in $(0,2)\subset (0,n)$.
	\end{enumerate}
	Therefore Theorem \ref{thm:general-bounds} and \eqref{eq:C_first_bound} imply the bound \eqref{eq:I^YM_P-est}. It remains to integrate this bound against $|\varphi^\lambda_{z}(x)\varphi^\lambda_{z}(\bar x)|$ to obtain the desired estimate \eqref{eq:Z_eps_diff}.

	\textit{Proof of \eqref{eq:Z_time_diff}}.
	By telescoping as in the proof of Lemma \ref{lem:covar_X^e-X},
	it suffices to consider $t\leq 2s$.
	We assume that $ \tau= I(\tau_1)I'(\tau_2)$ 
	where $\tau_i\in\CT$
	(the argument for the case $ \tau= \prod_{i=1}^3I(\tau_i)$ is completely analogous).
	Denoting $Z^{\e}_{t,s} = t^{\delta}X_t^{\tau,\e}-s^{\delta}X^{\tau,\e}_{s}$ and $\CP_{t,s}=\CP_{t}-\CP_{s}$, we write
	\begin{equ} [eq:time-diff_split]
		Z^{\e}_{t,s}\!
		= (t^\delta-s^\delta) X^{\tau,\e}_{t}\!
		+  s^\delta B(\CP_{t} \star X^{\tau_1,\e}\!,D\CP_{t,s}\star X^{\tau_2,\e})
		+  s^\delta B(\CP_{t,s}\star X^{\tau_1,\e}\!,D\CP_{s} \star X^{\tau_2,\e})\,.
	\end{equ}
	We next bound the ${L^2}$-norm of each term in \eqref{eq:time-diff_split} integrated against $\phi^\lambda_z$.
		
	For the first term in \eqref{eq:time-diff_split}, by \eqref{eq:Z_eps_diff} (applied with $\kappa=0$ therein), we have
	\begin{equ}
		(t^\delta-s^\delta)
		\|\scal{X^{\tau,\e}_{t},\phi_z^\lambda}\|_{L^2}
		\lesssim
		(t^{\delta}-s^{\delta}) t^{|\tau|/2-\beta/2}\lambda^\beta \lesssim
		|t-s|^{\kappa/4}\lambda^\beta\;,
	\end{equ}
	where we used $t\le 2s$ and that $\delta= -|\tau|/2+\beta/2+\kappa/4$ in the last inequality.

	For the second term on the right-hand side of \eqref{eq:time-diff_split} (the third can be estimated in exactly the same way), arguing as in the proof of \eqref{eq:Z_eps_diff}, we estimate the covariance
	\begin{equ}[eq:diff_split2]
		\E\big[B(\CP_{t} \star X^{\tau_1,\e}(x),D\CP_{t,s} \star X^{\tau_2,\e}(x))\otimes B(\CP_{t}\star X^{\tau_1,\e}(\bar x),D\CP_{t,s} \star X^{\tau_2,\e}(\bar x))\big]\;
	\end{equ}
	by writing $B(\CP_{t} \star X^{\tau_1,\e}(x),D\CP_{t,s} \star X^{\tau_2,\e}(x))$ as linear combinations with coefficients of the form
	\begin{equ}
		Y^{\tau,h,k}_t(x) = \int_{D^{V^{*}_{\tau}}}
		\prod_{v\in  L_{\tau}}  Y^{h_v}(x_{v})\delta_0(t_{v})
		\prod_{u\in  V^{*}_{\tau}} 	\!\!\tilde{G}^{(k_{\hat{u},u})}_{t_{\hat{u}}-t_{u}}(x_{\hat{u}}-x_{u}) \,\mrd x_u \mrd t_u\,,
	\end{equ} 
	where $k_e \in \N_0^n$ are subject to the same conditions as in \eqref{eq:Y^tau} and
	where $\tilde{G}^{}_{t_{\hat{u}}-t_{u}}(x_{\hat{u}}-x_{u})=\big(G^{}_{t-t_{u}}-G^{}_{s-t_{u}}\big)(x-x_{u})$
	if $\hat u=\rho$ and $|k_{\hat{u},u}|=1$, and  
	$\tilde{G}= G$ 
	otherwise. 
	For an isomorphic tree $\bar{\tau}$, we define $Y_{\bar t}^{\bar\tau,h,k}(\bar x)$ in an analogous manner.
	As in the proof of \eqref{eq:Z_eps_diff},
	by \eqref{eq:Wick},
	to estimate \eqref{eq:diff_split2} it suffices to estimate
	\begin{equ}
		\sum_{\mfC} \int_{D^{V^{*}_{\zF}}}\!
		\prod_{\{u,v\}\in \fC} \!\!\! \E\big[Y^{h_u}(x_{u})Y^{h_v}(x_{v})\big]\delta_0(t_{u})\delta_0(t_{v})
		\prod_{w\in  V^{*}_{\zF}} 	\!\tilde{G}^{(k_{\hat{w},w})}_{t_{\hat{w}}-t_{w}}(x_{\hat{w}}-x_{w}) \,\mrd x_w \mrd t_w\,.
	\end{equ}
	Again as in the proof of \eqref{eq:Z_eps_diff}, the summands for which $(\zF,\fC)$ is not safe vanish.
	We henceforth fix a complete contraction $\fC$ such that $(\zF,\fC)$ is safe.

	Note that all the leaves of~$\zF=\{\tau,\bar\tau\}$ represent the occurrence of $X^\e$ so that, by \assu{ass:GF} with $|k|=0$, $|\E\big[Y^{h_u}(x_{u})Y^{h_v}(x_{v})\big]|\lesssim|x_u-x_v|^{2-d}$ for every contracting edge $\{u,v\}$. It therefore suffices to estimate
	\begin{equ}[eq:covar-Y^tau-Wick']
	\int_{D^{V^{*}_{\zF}}}\!\prod_{\{u,v\}\in \fC} \!\!\! |x_u-x_v|^{2-d}\delta_0(t_{u})\delta_0(t_{v})
		\prod_{w\in  V^{*}_{\zF}} 
		\!\tilde{G}^{(k_{\hat{w},w})}_{t_{\hat{w}}-t_{w}}(x_{\hat{w}}-x_{w}) \,\mrd x_w \mrd t_w
		\;,
	\end{equ}
	which is a special case of \eqref{eq:general-conv_ts},
 	where the assignments in the definition of $(\bzF,\bfC)$ are the same as in the proof of \eqref{eq:Z_eps_diff} except with $a_e=d-2$ for all of the contracting edges.
	We remark that $(\bzF,\bfC)$ satisfies assumptions~\ref{pt:edges}-\ref{pt:contractions} of \theo{thm:general-bounds} (to see this, take $\kappa=0$ in 1,2,3 above, where we verified conditions \ref{pt:edges}-\ref{pt:contractions} in the proof of \eqref{eq:Z_eps_diff} for any $\kappa\in[0,4-d)$ therein) and therefore, by \eqref{eq:general-bound_diff} applied with $\theta=-2\beta\in [0, \max_{e\in \fC} a_e]$ and any $\tipar \in [0,1-\max_{e\in \fC} a_e/2]$,
	\begin{equ}[eq:quad_split2]
		\eqref{eq:covar-Y^tau-Wick'}\lesssim |t-s|^{\tipar} 	t^{2\Lambda(\tau)-\beta-\tipar} |x-\bar x|^{2\beta}\;.
	\end{equ} 
	Since $a_e=d-2$ for all $e\in \fC$, we have $\min_{e\in \fC}a_e=\max_{e\in \fC}a_e=d-2$ and $2\Lambda(\tau)=|\tau|$, and hence, from \eqref{eq:quad_split2} and the assumptions on $d$ and $\kappa$,
	\begin{equ}
		\eqref{eq:covar-Y^tau-Wick'}\lesssim |t-s|^{\kappa/2} t^{|\tau|-\beta-\kappa/2} 	|x-\bar x|^{2\beta}\;,
	\end{equ} 
	for any $\beta\in[\dd,0]$ and $\kappa\in[0,4-d)$.
	Therefore \eqref{eq:diff_split2} satisfies the same bound and \eqref{eq:Z_time_diff} follows by integrating the bound against $|\varphi^\lambda_{z}(x)\varphi^\lambda_{z}(\bar x)|$ (notice the factor $s^\delta$ in \eqref{eq:time-diff_split} and that $\delta = -|\tau|/2+\beta/2+\kappa/4$ and $s\le t\le 2s$).
\end{proof}

\section{Proof of \theo{thm:main}}
\label{sec:main_proof}

To finally prove \theo{thm:main}, we combine our deterministic result of \theo{thm:lwp} with the probabilistic estimates of \cor{corr:convergence-of-mollifications}.
	
\begin{proof} [of \theo{thm:main}]
	Set $N = \floor{2/(4-d)}$ and, for $\kappa>0$ sufficiently small,
	define $\omega_{\<Xi>}\eqdef\dd-\kappa$
	and, for all $\tau\in \CT^N$,
	\begin{equ}
		\beta_\tau \eqdef (2-d)/2\;,\qquad 	
		\delta_\tau \eqdef -|\tau|/2+\beta_\tau/2 +\kappa/2\;,  \qquad
		\omega_\tau \eqdef \beta_\tau-2\delta_\tau+2\;.
	\end{equ}
	Note that, by Remark \ref{rem:homogeneity},
	$\omega_{\tau}=\noise{\tau}(4-d)/2-1-\kappa$ for all $\tau\in\CT^N_{\<Xi>}$.
	We claim that, for $\kappa$ sufficiently small, $\omega_{\<Xi>}, \bbeta,\bdelta$ satisfy condition~\eqref{eq:CI}.

	To prove the claim, remark first that since $d\in(2,4)$, we have $\beta_\tau\in(-1,0)$  for all $\tau\in\CT^N$. 
	Furthermore, for any $ \tau\in \CT^N$, we have $ \delta_\tau<1$ as it is equivalent to
	\begin{equs}
		\noise{\tau}(d-2)/4-(\noise{\tau}-3)/2+(2-d)/4+\kappa/2 < 1 \, 	\Leftrightarrow( \noise{\tau}-1)(2-d/2) > \kappa
	\end{equs}		
	(and the last inequality holds for sufficiently small $\kappa$ because $\noise{\tau}>1$ and $d<4$).
	Next, since $\noise{\tau}\le 2/(4-d)$ and $\kappa>0$, it is clear that $\noise{\tau}(4-d)/2<1+\kappa$, which shows $\omega_\tau < 0$. 

	It remains to verify that $\omega,\alpha,\gamma/2 >-1$ for $\omega,\alpha,\gamma$ as in \eqref{eq:CI}.
	Observe that $\omega_{\tau}=\noise{\tau}(4-d)/2-1-\kappa$ is increasing in $\noise{\tau}$ (since $d<4$).
	Therefore, $\omega = \omega_{\<Xi>} = \dd -\kappa > -1$, where the inequality holds for $\kappa$ sufficiently small.
	
	To check $\alpha,\gamma/2 >-1$, 
	since $\omega_{\tau}$ is an affine function of $\noise{\tau}$, we have
	\begin{equs}
		\min&\{\omega_{\tau_1}+\omega_{\tau_2} \,:\, \tau_1,\tau_2 \in\CT_{\<Xi>}^N\,,\; \noise{\tau_1}+\noise{\tau_2}>N\}=N(4-d)/2-d/2-2\kappa\;,\\
		 \min&\Big\{\sum_{i=1}^3\omega_{\tau_i} \,:\, \tau_i \in\CT_{\<Xi>}^N\,,\; \sum_{i=1}^3\noise{\tau_i}>N\Big\}=(N-1)(4-d)/2+1-d-3\kappa\;.
	\end{equs}
	By the above identities, $\alpha>-1$ and $\gamma>-2$ are equivalent to $N>(4\kappa+d-2)/(4-d)$ and $N>(6\kappa+d-2)/(4-d)$ respectively, which, considering $N = \floor{2/(4-d)}$, hold for sufficiently small $\kappa$.

	In conclusion, the above shows that $\omega_{\<Xi>},\bbeta,\bdelta$ satisfy condition~\eqref{eq:CI}.
	Define $\init\equiv\init_{\omega_{\<Xi>},\bbeta,\bdelta}$ and $\Theta\equiv\Theta_{\omega_{\<Xi>},\bbeta,\bdelta}$ as in Definition \ref{def:CI}. 
	Since $\beta_\tau>-1$ and $\delta_\tau<1$ for all $\tau \in \CT^N$,
	Proposition \ref{prop:closable_graph} implies that $(\CI,\Theta)$ is continuously embedded in $\CC^{\omega_{\<Xi>}}$.

	Next, the assumptions of \cor{corr:convergence-of-mollifications} hold with our choice of parameters, and therefore $\lim_{\eps\downarrow0} X^\eps = X$ in $(\init,\Theta)$ a.s. and in $L^p(\P)$ for all $p \in [1,\infty)$ (since the only possible limit point of $X^\eps$ is $X$).
	In particular, $X$ a.s. takes values in $\CI$.

	Take now any $ \theta> 0$ such that $\omega/2-\theta,\alpha/2-\theta,(\gamma/2-\theta)/2 >-1/2$. Since $X\in \init$ a.s., by \theo{thm:lwp}, there exists $\kappa>0$ such that, for $T^{\kappa} \asymp (1+\Theta(X))^{-2}$,
	there exists a unique solution $A$ to \eqref{eq:A_eq} on $(0,T)\times\T^n$ given by
	\begin{equ}
		A_t = R_t(X)+\CP_t \star \CS_{\<Xi>}^N X\;,\quad R(X) \in \CB_T
	\end{equ}
	and that has initial condition $X$ in the sense of Theorem \ref{thm:lwp}. Likewise for $X^\eps$.

	To prove the convergence in \eqref{eq:convergence}, we write using the triangle inequality
	\begin{equ}[eq:A^eps-A]
		|A_t^\e - A_t|_{\CC^\eta} \le |R_t(X^\e)-R_t(X)|_{\CC^\eta} + |\CP_t \star \CS_{\<Xi>}^N X^\e-\CP_t \star\CS_{\<Xi>}^N X|_{\CC^\eta}\;.
	\end{equ}
	For the first term on the right-hand side of \eqref{eq:A^eps-A}, we know from \theo{thm:lwp} that
	\begin{equ}[eq:R]
		|R_t(X^\e)-R_t(X)|_{\CC^\eta}\leq |R_t(X^\e)-R_t(X)|_{\infty} \leq t^\theta \Theta(X^\e,X)\;.
	\end{equ}
	For the second term, since $\eta<\dd$, we can ensure $\beta_\tau, \omega>\eta$ for all $\tau\in\CT^N$ by taking $\kappa$ sufficiently small.
	Setting $\delta = \max_{\tau\in\CT^N}\delta_\tau$ and using the triangle inequality and the definition of $\Theta\equiv \Theta_{\omega_{\<Xi>},\bbeta,\bdelta}$, we have
	\begin{equ}
		|\CS_{t}^N X^\e-\CS_{t}^N X|_{\CC^\eta}\lesssim t^{-\delta}\Theta( X^\e, X)\;,
	\end{equ}
	from which together with the fact that $\delta<1$, we obtain
	\begin{equ}
		|\CP_t \star \CS^N X^\e-\CP_t \star \CS^N X|_{\CC^\eta}\lesssim t^{1-\delta}\Theta(X^\e, X)\;.
	\end{equ}
	We also know, again from the definition of $\Theta$, that
	\begin{equ}
        |X^\e- X|_{\CC^{\omega_{\<Xi>}}}\le \Theta( X^\e, X)\;.
	\end{equ}
	Therefore, since $\eta<\omega_{\<Xi>}$,
	\begin{equ}
		|\CP_t \star \CS_{\<Xi>}^N X^\e-\CP_t \star \CS_{\<Xi>}^N X|_{\CC^\eta}\lesssim \Theta( X^\e, X)\;,
	\end{equ}
	which combined with \eqref{eq:A^eps-A} and \eqref{eq:R} implies
	\begin{equ}
		\sup_{\!t\in[0,T]}|A^\e_t - A_t|_{\CC^\eta} \lesssim \Theta( X^\e, X)\;.
	\end{equ}
	We similarly obtain, using heat flow estimates,
	\begin{equ}
		\sup_{\!t\in(0,T)} t^{-\eta/2}|A^\e_t - A_t|_\infty\lesssim \Theta( X^\e, X)\;.
	\end{equ}
	The claimed a.s.- and $L^p$-convergence, $p\in [1,\infty)$, in \eqref{eq:convergence}
	now follow from the last two inequalities and that $\Theta(X^\e, X)\to 0$ a.s. and in $L^p$.
	The facts that $A$ is smooth on $(0,T)\times \T^n$ and $\lim_{t\downarrow 0} |A_t-X|_{\CC^\eta} = 0$ a.s. are direct consequences of \theo{thm:lwp} since we can take $\omega>\eta$.

	Finally, for the second assertion in \eqref{eq:convergence},  recall that we have $T^{-1}\asymp (1+\Theta(X))^{2/\kappa}$ for some $\kappa>0$.  Consequently, the boundedness of $\E [T^{-p}]$ for all $p\ge 1$ is immediately deduced from the boundedness of the moments of $\Theta(X)^{2/\kappa}$ for the Gaussian field $X$, which concludes the proof.
\end{proof}

\appendix

\section{Gaussian free field in fractional dimensions} \label{app:GFF}

For $n\geq 2$ and $d\in(2,4)$, define the Gaussian free field (GFF) $X$ on $\T^n$ as the Gaussian distribution with covariance $C = (-\Delta)^{\frac{d-n-2}{2}}$,
where we treat $\Delta$ as an operator on zero-mean functions.
The next proposition shows that $X$ satisfies Assumption~\ref{ass:GF}.
Although the proof is standard (and follows ideas from \cite{stinga2019user}), we give details for completeness.

\begin{proposition}
	Let $n\geq 2$, $d\in(2,4)$, and $C = (-\Delta)^{\frac{d-n-2}{2}}$.
	Then for any $k\in\N^n_0$, there exists $K>0$ such that $|C^{(k)}(x)|\leq K |x|^{2-d-|k|}$ for all $x\in\T^n\setminus\{0\}$.
\end{proposition}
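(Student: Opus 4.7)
The plan is to use the subordination formula for negative powers of a positive self-adjoint operator. Setting $s \eqdef (n+2-d)/2$, we have $s \in ((n-2)/2, n/2)$ and, in particular, $s>0$. On the subspace of zero-mean functions on $\T^n$ (on which $-\Delta$ is strictly positive), we have
\begin{equ}
    (-\Delta)^{-s} = \frac{1}{\Gamma(s)}\int_0^\infty t^{s-1} \mre^{t\Delta}\,\mrd t.
\end{equ}
Concretely, assuming $\T^n$ has unit volume, this means (up to an additive constant, which is irrelevant for spatial derivatives)
\begin{equ}
    C(x) = \frac{1}{\Gamma(s)}\int_0^\infty t^{s-1}\bigl(G_t(x)-1\bigr)\,\mrd t,
\end{equ}
where $G_t$ is the heat kernel on $\T^n$ from \defi{def:heat-kernel}. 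Differentiating under the integral (justified by the pointwise bounds below and dominated convergence) gives, for $|k|\geq 1$,
\begin{equ}
    C^{(k)}(x) = \frac{1}{\Gamma(s)}\int_0^\infty t^{s-1}G_t^{(k)}(x)\,\mrd t.
\end{equ}
The case $|k|=0$ is handled identically using the bound $|G_t(x)-1|$ in place of $|G_t^{(k)}(x)|$.

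The next step is to split the integral at $t=1$. For the short-time piece $t\in(0,1]$, I would use \eqref{eq:G^k-bound} together with the fact that for $|x|\leq 1/2$ and $t \leq 1$ the $m=0$ term of the periodization dominates, yielding
\begin{equ}
    |G_t^{(k)}(x)| \lesssim t^{-(n+|k|)/2}\mre^{-|x|^2/(8t)}.
\end{equ}
Performing the substitution $u = |x|^2/(8t)$ turns the $t$-integral into a gamma-type $u$-integral:
\begin{equ}
    \int_0^1 t^{s-1}|G_t^{(k)}(x)|\,\mrd t \lesssim |x|^{2s-n-|k|} \int_{|x|^2/8}^{\infty} u^{(n+|k|)/2-s-1}\mre^{-u}\,\mrd u.
\end{equ}
The exponent $2s-n-|k|$ equals $2-d-|k|$, which is the desired power, and the $u$-integral is uniformly bounded as $|x|\downarrow 0$ since $(n+|k|)/2-s = (d-2+|k|)/2 > 0$ (which is where the assumption $d>2$ is used), ensuring integrability at $u=0$.

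For the long-time piece $t\in[1,\infty)$, I would invoke the spectral gap of $-\Delta$ on zero-mean functions on $\T^n$: there exists $c>0$ (the first nonzero Laplacian eigenvalue) such that $|G_t^{(k)}(x)| + |G_t(x)-1| \lesssim \mre^{-ct}$ uniformly in $x\in\T^n$ and $t\geq 1$. Consequently $\int_1^\infty t^{s-1}|G_t^{(k)}(x)|\,\mrd t$ is bounded uniformly in $x$, and, since $|x|$ is bounded on $\T^n$, this contribution is absorbed into the bound $|x|^{2-d-|k|}$ (which is itself bounded below away from the origin). Combining the two pieces completes the proof. The only mild technical points are (i) treating the zero mode correctly, which the subtraction of $1$ in $G_t-1$ handles, and (ii) justifying the exchange of differentiation and integration, which follows from the Gaussian and spectral-gap bounds via dominated convergence; neither is a real obstacle.
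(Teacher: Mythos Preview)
Your proposal is correct and follows essentially the same approach as the paper: the subordination identity $(-\Delta)^{-s}=\Gamma(s)^{-1}\int_0^\infty t^{s-1}(\mre^{t\Delta}-1)\,\mrd t$, a split of the time integral at $t=1$, the Gaussian bound \eqref{eq:G^k-bound} for the short-time piece together with the substitution $u=|x|^2/(8t)$, and exponential decay (spectral gap) for the long-time piece. The paper organises the argument by first doing $k=0$ and then $|k|\geq 1$, whereas you state the derivative case directly and remark how $k=0$ is handled; this is a cosmetic difference only.
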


\begin{proof}
We first consider $k=0$.
	Recall that $\lambda^{-\beta} = \Gamma(\beta)^{-1}\int_0^\infty \mre^{-t\lambda}t^{-1+\beta}\mrd t$ for $\lambda,\beta >0$.
	Since $\mre^{t\Delta}-1$, as a convolution operator, has the same spectrum and eigenfunctions as $\mre^{t\Delta}$ except that the eigenvalue of $1$ becomes $0$ (instead of $1$ as for $\mre^{t\Delta}$),
	we obtain the identity
	\begin{equ}[eq:Delta_heat]
	(-\Delta)^{-\beta}(x) = \Gamma(\beta)^{-1}\int_0^\infty \big(\mre^{t\Delta}(x) - 1\big) t^{-1+\beta}\mrd t\;.
	\end{equ}
	The integral converges absolutely since, for $t\geq 1$,
	\begin{equ}
		|\mre^{t\Delta}(x) -1 | \lesssim \Big|\sum_{m\in\Z^n\setminus\{0\}} \mre^{-|m|^2 t}\mre^{2\pi\mri \scal{m,x}}
		\Big| \lesssim \mre^{-t}\;.
	\end{equ}
	In particular, the integral $\int_1^\infty \big(\mre^{t\Delta}(x) - 1\big) t^{-1+\beta}\mrd t$ is of order $1$ for all $x \in \T^n$.

	We now observe that, for $t\in (0,1)$ and $|x|< \frac14$,
	\begin{equ}
		\mre^{t\Delta}(x) = (4\pi t)^{-n/2}\mre^{-|x|^2/(4t)} + f(t,x)
	\end{equ}
	where $f$ is a smooth function,
	and for $|x|\geq \frac14$, $\mre^{t\Delta}(x)$ is smooth in $t,x$.
	Therefore, for $0<2\beta < n$, 
	\begin{equ}
		\int_0^1 \big(\mre^{t\Delta}(x) - 1\big) t^{-1+\beta}\mrd t 
		\asymp \int_1^\infty s^{n/2} \mre^{-s|x|^2/4} s^{-\beta -1} \mrd s +O(1)
		\asymp |x|^{2\beta - n}\;.
	\end{equ}
	Taking $2\beta = n+2-d \in (0,n)$, we obtain $(-\Delta)^{-\beta} \asymp |x|^{2-d}$,
	which proves the case $k=0$.
	For $|k|>0$, we differentiate~\eqref{eq:Delta_heat}
	and use a similar argument to obtain
	\begin{equs}
		|D^k (-\Delta)^{-\beta}(x)|
		&\asymp O(1)+ \Big| \int_0^1 D^k \mre^{-|x|^2/(4t)} t^{-n/2} t^{-1+\beta} \mrd t \Big| \\
		\\
		&\lesssim  \int_0^1 t^{-|k|/2}\mre^{-|x|^2/(8t)} t^{-n/2-1+\beta} \mrd t \\
		&\asymp \int_1^\infty \mre^{-s|x|^2/8} s^{n/2-\beta+|k|/2-1} \mrd s
		\asymp |x|^{-|k|+2\beta - n}\;,
	\end{equs}
	where we used \eqref{eq:G^k-bound} in the second line.
	We conclude with $2\beta = n+2-d$.
\end{proof}

	\endappendix
	
	\paragraph{Acknowledgements}
	We would like to thank the anonymous referees for their constructive and detailed comments that helped to improve the article.
I.C. acknowledges support by the Engineering and Physical Sciences Research Council via the New Investigator Award EP/X015688/1
and by the European Research Council via the Starting Grant SQGT 101116964.

	\bibliographystyle{Martin}
\bibliography{./refs}

\end{document}